\title[]{The exact strength of the class forcing theorem}
\author[Gitman]{Victoria Gitman}
\address[Victoria Gitman]{The City University of New York, CUNY Graduate Center, Mathematics Program, 365 Fifth Avenue, New York, NY 10016, USA}
\email{vgitman@nylogic.org}
\urladdr{http://boolesrings.org/victoriagitman}
 \author{Joel David Hamkins}
 \address[Joel David Hamkins]
          {Professor of Logic, University of Oxford, and
           Sir Peter Strawson Fellow, University College, High Street, Oxford OX1 4BH, UK}
 \email{joeldavid.hamkins@philosophy.ox.ac.uk}
 \urladdr{http://jdh.hamkins.org}
\author[Holy]{Peter Holy}
\address[Peter Holy]{Mathematisches Institut, Universit\"at Bonn,
Endenicher Allee 60, 53115 Bonn, Germany}
\email{pholy@math.uni-bonn.de}
\urladdr{}
\author[Schlicht]{Philipp Schlicht}
\address[Philipp Schlicht]{School of Mathematics, University of Bristol, Fry Building, Woodland Road, Bristol, BS8 1UG, UK}
\email{philipp.schlicht@bristol.ac.uk}
\urladdr{}
\DeclareRobustCommand{\okina}{%
  \raisebox{\dimexpr\fontcharht\font`A-\height}{%
    \scalebox{0.8}{`}%
  }%
}
\author{Kameryn J. Williams}
\address[Kameryn J. Williams]{
University of Hawai\okina{}i at M\=anoa \\
Department of Mathematics \\
2565 McCarthy Mall, Keller 401A \\
Honolulu, HI  96822 \\
USA}
\email{kamerynw@hawaii.edu}
\urladdr{http://kamerynjw.net}
\thanks{The research of the second author has been supported by grant \#69573-00 47 from the CUNY Research Foundation. He is also grateful for the support he has received from the Hausdorff Center at the University of Bonn for several visits there. The third and fourth author were partially supported by DFG-grant LU2020/1-1. This project has received funding from the European Union's Horizon 2020 research and innovation programme under the Marie Sk\l odowska-Curie grant agreement No 794020 (IMIC) for the fourth author. Commentary concerning this paper can be made at http://jdh.hamkins.org/class-forcing-theorem.}
\newtheorem{theorem}{Theorem}
\newtheorem*{maintheorem*}{Main Theorem}
\newtheorem*{maintheorems*}{Main Theorems}
\newtheorem*{corollary*}{Corollary}
\newtheorem*{corollaries*}{Corollaries}
\newtheorem{sublemma}{Lemma}[theorem]
\newtheorem{lemma}[theorem]{Lemma}
\newtheorem*{question*}{Question}
\newtheorem*{questions*}{Questions}
\newtheorem*{mainquestion*}{Main Question} 
\newtheorem*{openquestion*}{Open Question} 
\newtheorem{definition}[theorem]{Definition}
\newcommand{\QED}{\end{proof}}
\def\proclaim[#1]{{\bf #1}}
\def\BF#1.{{\bf #1.}}
\newcommand{\Godel}{G\"odel}
\newcommand{\B}{{\mathbb B}}
\newcommand{\F}{{\mathbb F}}
\renewcommand{\P}{{\mathbb P}}
\newcommand{\Q}{{\mathbb Q}}
\newcommand{\T}{{\mathbb T}}
\newcommand{\one}{\mathbbm{1}} 
\newcommand{\from}{\mathbin{\vbox{\baselineskip=2pt\lineskiplimit=0pt
                         \hbox{.}\hbox{.}\hbox{.}}}}
\newcommand{\of}{\subseteq}
\newcommand{\set}[1]{\{\,{#1}\,\}}
\newcommand{\rank}{\mathop{\rm rank}}
\newcommand{\Coll}{\mathop{\rm Coll}}
\newcommand{\Con}{\mathop{{\rm Con}}}
\newcommand{\restrict}{\upharpoonright} 
\newcommand{\satisfies}{\models}
\newcommand{\forces}{\Vdash}
\newcommand{\concat}{\mathbin{{}^\smallfrown}}
\newcommand{\union}{\cup}
\newcommand{\intersect}{\cap}
\newcommand{\smalllt}{\mathrel{\mathchoice{\raise2pt\hbox{$\scriptstyle<$}}{\raise1pt\hbox{$\scriptstyle<$}}{\raise0pt\hbox{$\scriptscriptstyle<$}}{\scriptscriptstyle<}}}
\newcommand{\smallleq}{\mathrel{\mathchoice{\raise2pt\hbox{$\scriptstyle\leq$}}{\raise1pt\hbox{$\scriptstyle\leq$}}{\raise1pt\hbox{$\scriptscriptstyle\leq$}}{\scriptscriptstyle\leq}}}
\newcommand{\boolval}[1]{\mathopen{\lbrack\!\lbrack}\,#1\,\mathclose{\rbrack\!\rbrack}}
\def\[#1]{\boolval{#1}}
\newbox\gnBoxA
\newdimen\gnCornerHgt
\newdimen\gnArgHgt
\def\gcode #1{%
\setbox\gnBoxA=\hbox{$#1$}%
\gnArgHgt=\ht\gnBoxA%
\ifnum     \gnArgHgt<\gnCornerHgt \gnArgHgt=0pt%
\else \advance \gnArgHgt by -\gnCornerHgt%
\fi \raise\gnArgHgt\hbox{\tiny$\ulcorner$} \box\gnBoxA %
\raise\gnArgHgt\hbox{\tiny$\urcorner$}}
\newcommand{\UnderTilde}[1]{{\setbox1=\hbox{$#1$}\baselineskip=0pt\vtop{\hbox{$#1$}\hbox to\wd1{\hfil$\sim$\hfil}}}{}}
\newcommand{\Undertilde}[1]{{\setbox1=\hbox{$#1$}\baselineskip=0pt\vtop{\hbox{$#1$}\hbox to\wd1{\hfil$\scriptstyle\sim$\hfil}}}{}}
\newcommand{\undertilde}[1]{{\setbox1=\hbox{$#1$}\baselineskip=0pt\vtop{\hbox{$#1$}\hbox to\wd1{\hfil$\scriptscriptstyle\sim$\hfil}}}{}}
\newcommand{\UnderdTilde}[1]{{\setbox1=\hbox{$#1$}\baselineskip=0pt\vtop{\hbox{$#1$}\hbox to\wd1{\hfil$\approx$\hfil}}}{}}
\newcommand{\Underdtilde}[1]{{\setbox1=\hbox{$#1$}\baselineskip=0pt\vtop{\hbox{$#1$}\hbox to\wd1{\hfil\scriptsize$\approx$\hfil}}}{}}
\renewcommand{\iff}{\mathrel{\leftrightarrow}}
\newcommand{\Iff}{\mathrel{\Longleftrightarrow}}
\def\<#1>{\left\langle#1\right\rangle}
\newcommand{\Ord}{\mathord{{\rm Ord}}}
\newcommand{\ETR}{{\rm ETR}}
\newcommand{\ZFC}{{\rm ZFC}}
\newcommand{\ZF}{{\rm ZF}}
\newcommand{\KM}{{\rm KM}}
\newcommand{\GB}{{\rm GB}}
\newcommand{\GBC}{{\rm GBC}}
\newcommand{\GCH}{{\rm GCH}}
\newcommand{\AC}{{\rm AC}}
\newcommand{\cell}[1]{\boxit{\hbox to 17pt{\strut\hfil$#1$\hfil}}}
\newcommand{\head}[2]{\lower2pt\vbox{\hbox{\strut\footnotesize\it\hskip3pt#2}\boxit{\cell#1}}}
\newcommand{\boxit}[1]{\setbox4=\hbox{\kern2pt#1\kern2pt}\hbox{\vrule\vbox{\hrule\kern2pt\box4\kern2pt\hrule}\vrule}}
\newcommand{\Col}[3]{\hbox{\vbox{\baselineskip=0pt\parskip=0pt\cell#1\cell#2\cell#3}}}
\newcommand{\tapenames}{\raise 5pt\vbox to .7in{\hbox to .8in{\it\hfill input: \strut}\vfill\hbox to
.8in{\it\hfill scratch: \strut}\vfill\hbox to .8in{\it\hfill output: \strut}}}
\newcommand{\Head}[4]{\lower2pt\vbox{\hbox to25pt{\strut\footnotesize\it\hfill#4\hfill}\boxit{\Col#1#2#3}}}
\newcommand{\Dots}{\raise 5pt\vbox to .7in{\hbox{\ $\cdots$\strut}\vfill\hbox{\ $\cdots$\strut}\vfill\hbox{\
$\cdots$\strut}}}
\newcommand\ETRord{\ETR_{\Ord}}
\renewcommand\T{\mathord{\rm T}}
\newcommand\Tr{\mathord{\rm Tr}}
\newcommand\echeck{{\lower 3pt\hbox{$\check{}$}}}
\newcommand\FA{\F_{\!A}}
\newcommand\op{\operatorname{op}}
\keywords{Class forcing, forcing theorem, truth predicates, infinitary logic} 
\subjclass{03E40, 03E70} 
\begin{document}

\begin{abstract}
The class forcing theorem, which asserts that every class forcing notion $\P$ admits a forcing relation $\forces_\P$, that is, a relation satisfying the forcing relation recursion---it follows that statements true in the corresponding forcing extensions are forced and forced statements are true---is equivalent over \Godel-Bernays set theory \GBC\ to the principle of elementary transfinite recursion $\ETRord$ for class recursions of length $\Ord$. It is also equivalent to the existence of truth predicates for the infinitary languages $\mathcal{L}_{\Ord,\omega}(\in,A)$, allowing any class parameter $A$; to the existence of truth predicates for the language $\mathcal{L}_{\Ord,\Ord}(\in,A)$; to the existence of $\Ord$-iterated truth predicates for first-order set theory $\mathcal{L}_{\omega,\omega}(\in,A)$; to the assertion that every separative class partial order $\P$ has a set-complete class Boolean completion; to a class-join separation principle; and to the principle of determinacy for clopen class games of rank at most $\Ord+1$. Unlike set forcing, if every class forcing notion $\P$ has a forcing relation merely for atomic formulas, then every such $\P$ has a uniform forcing relation applicable simultaneously to all formulas. Our results situate the class forcing theorem in the rich hierarchy of theories between $\GBC$ and Kelley-Morse set theory $\KM$.
\end{abstract}

\maketitle

\section{Introduction}

We shall characterize the exact strength of the class forcing theorem, which asserts that every class forcing notion $\P$ has a corresponding forcing relation $\forces_\P$, a relation satisfying the relevant forcing relation recursion. When there is such a forcing relation, then statements true in any corresponding forcing extension are forced and forced statements are true in those extensions.

Unlike set forcing, for which one may prove in \ZFC\ that every set forcing notion has corresponding forcing relations, with class forcing it is consistent with \Godel-Bernays set theory \GBC\ that there is a proper class forcing notion lacking a corresponding forcing relation, even merely for the atomic formulas. For certain forcing notions (see~\cite{HolyKrapfLuckeNjegomirSchlicht2016:Class-forcing, Krapf2017:Dissertation}, also theorem~\ref{Theorem.Atomic-forcing-relation-to-truth-predicate}), the existence of an atomic forcing relation implies $\Con(\ZFC)$ and much more, and so the consistency strength of the class forcing theorem strictly exceeds \GBC, if this theory is consistent. Nevertheless, the class forcing theorem is provable in stronger theories, such as Kelley-Morse set theory. What is the exact strength of the class forcing theorem?

Our project here is to identify the strength of the class forcing theorem by situating it in the rich hierarchy of theories between \GBC\ and \KM, displayed in part in figure~\ref{Figure.Theories}, with the class forcing theorem highlighted in blue. (The theory KM + Class Choice that appears at the top of figure~\ref{Figure.Theories} is defined e.g. in \cite[definition 2.11]{williams-min-km}.) It turns out that the class forcing theorem is equivalent over \GBC\ to an attractive collection of several other natural set-theoretic assertions; it is a robust axiomatic principle.
\begin{figure}[h]\label{Figure.Theories}
  \begin{tikzpicture}[theory/.style={draw,rounded rectangle,scale=.5,minimum height=6.5mm},scale=.3]
     \draw (0:0) node[theory] (ZFC) {$\GBC\ \equiv\ \GB\ \equiv\ \ZFC\ \equiv\ \ZF$}
           ++(90:2) node[theory] (ConZFC) {$\GBC+\Con(\GBC)\quad\equiv\quad\ZFC+\Con(\ZFC)$}
           ++(90:2) node[theory] (ConalphaZFC) {$\GBC+\Con^\alpha(\GBC)\quad\equiv\quad\ZFC+\Con^\alpha(\ZFC)$}
           ++(90:2) node[theory] (ETRomega) {$\GBC+\ETR_\omega$}
           ++(90:2) node[theory] (ETRalpha) {$\GBC+\ETR_\alpha\quad=\quad\GBC+\alpha\text{-iterated truth predicates}$}
           ++(90:2) node[theory] (ETR<Ord) {$\GBC+\ETR_{<\Ord}\quad=\quad \GBC+\forall\alpha\exists \alpha\text{-iterated truth predicates}$}
           ++(90:5) node[theory,very thick,blue!80!black,scale=1.2] (ETRord) {\parbox[c][2.7cm]{9cm}{{\quad}$\GBC+\text{ Class forcing theorem}\quad=\quad \GBC+\ETRord$\\
            ${\qquad}=\quad\GBC+\text{ truth predicates for }\mathcal{L}_{\Ord,\omega}(\in,A)$\\
            ${\qquad}=\quad\GBC+\text{ truth predicates for }\mathcal{L}_{\Ord,\Ord}(\in,A)$\\
            ${\qquad}=\quad\GBC+\text{ $\Ord$-iterated truth predicates }$\\
            ${\qquad}=\quad\GBC\,+$ Boolean set-completions exist\quad $=$ \\
            $\GBC+\text{Determinacy for clopen class games of rank }\Ord+1$}}
           ++(90:5) node[theory] (ETROrd*omega) {$\GBC+\ETR_{\Ord\cdot\omega}$}
           ++(90:2) node[theory] (ETR) {$\GBC+\ETR\quad=\quad\GBC+\text{ Determinacy for clopen class games}$}
           ++(90:2) node[theory] (Open) {$\GBC+\text{ Determinacy for open class games}$}
           ++(90:2) node[theory] (Pi11) {$\GBC+\Pi^1_1$-comprehension}
           ++(90:2) node[theory] (KM) {$\KM\quad \equiv\quad\KM+\text{Class Choice}$};
     \draw[<-,>=stealth,thick]
                        (ZFC) edge (ConZFC)
                        (ConZFC) edge (ConalphaZFC)
                        (ConalphaZFC) edge (ETRomega)
                        (ETRomega) edge (ETRalpha)
                        (ETRalpha) edge (ETR<Ord)
                        (ETR<Ord) edge (ETRord)
                        (ETRord) edge (ETROrd*omega)
                        (ETROrd*omega) edge (ETR)
                        (ETR) edge (Open)
                        (Open) edge (Pi11)
                        (Pi11) edge (KM);
  \end{tikzpicture}
  \caption{A hierarchy of theories between $\GBC$ and $\KM$ by consistency strength ($\equiv$ means equiconsistent)}
\end{figure}
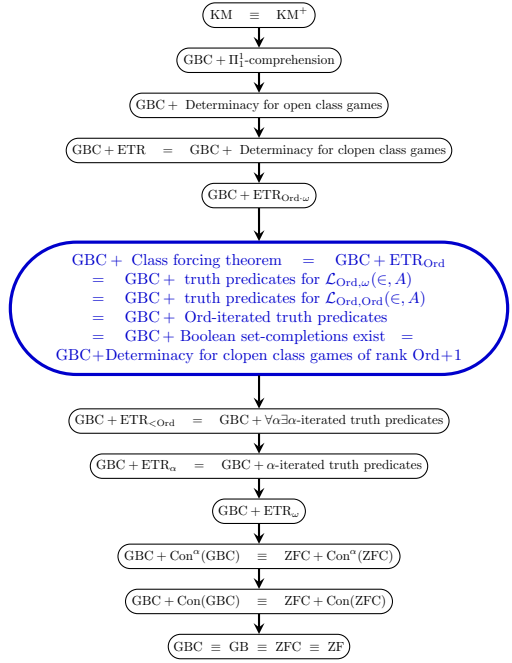

The main theorem is naturally part of the emerging subject we call the reverse mathematics of second-order set theory, a higher analogue of the perhaps more familiar reverse mathematics of second-order arithmetic. In this new research area, we are concerned with the hierarchy of second-order set theories between \GBC\ and \KM\ and beyond, analyzing the strength of various assertions in second-order set theory, such as the principle \ETR\ of elementary transfinite recursion, the principle of $\Pi^1_1$-comprehension or the principle of determinacy for clopen class games. We fit these set-theoretic principles into the hierarchy of theories over the base theory \GBC. The main theorem of this article does exactly this with the class forcing theorem by finding its exact strength in relation to nearby theories in this hierarchy.

Specifically, extending the analysis of~\cite{HolyKrapfLuckeNjegomirSchlicht2016:Class-forcing, HolyKrapfSchlicht2017:Characterizations-of-pretameness, Krapf2017:Dissertation, GitmanHamkins2016:OpenDeterminacyForClassGames}, we show that the class forcing theorem is equivalent over \GBC\ to the principle of elementary transfinite recursion $\ETRord$ for transfinite class recursions of length $\Ord$; to the existence of various kinds of truth predicates and iterated truth-predicates; to the existence of Boolean completions for any separative class partial order; to a class-join separation principle; and to the principle of determinacy for clopen class games of rank at most $\Ord+1$. In addition, by separating the class forcing theorem from the nearby theories of figure~\ref{Figure.Theories}, placing it strictly between the theory with $\ETR_\alpha$ simultaneously for all ordinals $\alpha$ and the theory $\ETR_{\Ord\cdot\omega}$, we locate it finely in the hierarchy of second-order set theories.

\goodbreak
\begin{maintheorem*}
 The following are equivalent over \Godel-Bernays set theory \GBC.
 \begin{enumerate}[\rm(1)]
   \item\label{Main.atomic} The atomic class forcing theorem: every class forcing notion admits forcing relations for atomic formulas $$p\forces\sigma=\tau\qquad\qquad p\forces\sigma\in\tau.$$
   \item\label{Main.scheme} The class forcing theorem scheme: for each first-order formula $\varphi$ in the forcing language, with finitely many class names $\dot \Gamma_i$, there is a forcing relation applicable to this formula and its subformulas
       $$p\forces\varphi(\vec \tau,\dot\Gamma_0,\ldots,\dot\Gamma_m).$$ 
   \item\label{Main.Uniform-first-order} The uniform first-order class forcing theorem: every class forcing notion $\P$ admits a uniform forcing relation $$p\forces\varphi(\vec \tau,\dot\Gamma_0,\ldots,\dot\Gamma_m)$$ applicable to all assertions \hbox{$\varphi\in\mathcal{L}_{\omega,\omega}(\in,V^\P,\dot\Gamma_0,\ldots,\dot\Gamma_m)$} in the first-order forcing language with finitely many class names.
   \item\label{Main.Uniform-infinitary} The uniform infinitary class forcing theorem: every class forcing notion $\P$ admits a uniform forcing relation $$p\forces\varphi(\vec \tau,\dot\Gamma_0,\ldots,\dot\Gamma_m)$$ applicable to all assertions $\varphi\in\mathcal{L}_{\Ord,\Ord}(\in,V^\P,\dot\Gamma_0,\ldots,\dot\Gamma_m)$ in the infinitary forcing language with finitely many class names.
   \item\label{Main.Names-for-truth-predicate} Names for truth predicates: every class forcing notion $\P$ has a class name $\dot\T$ and a forcing relation for which $\one\forces\dot\T$ is a truth-predicate for the first-order forcing language with finitely many class names $\mathcal{L}_{\omega,\omega}(\in,V^\P,\dot\Gamma_0,\ldots,\dot\Gamma_m)$.
   \item\label{Main.Boolean-completion} Boolean completions: Every class forcing notion $\P$, that is, every separative class partial order, admits a Boolean completion $\B$, a set-complete class Boolean algebra into which $\P$ densely embeds.
   \item\label{Main.Class-join-separation} The class-join separation principle plus $\ETRord$-foundation.
   \item\label{Main.Truth-Ord-omega} For every class $A$, there is a truth predicate for $\mathcal{L}_{\Ord,\omega}(\in,A)$.
   \item\label{Main.Truth-Ord-Ord} For every class $A$, there is a truth predicate for $\mathcal{L}_{\Ord,\Ord}(\in,A)$.
   \item\label{Main.Iterated-truth} For every class $A$, there is an $\Ord$-iterated truth predicate for $\mathcal{L}_{\omega,\omega}(\in,A)$.
   \item\label{Main.Clopen-determinacy} The principle of determinacy for clopen class games of rank at most $\Ord+1$.
   \item\label{Main.ETRord} The principle $\ETRord$ of elementary transfinite recursion for $\Ord$-length recursions of first-order properties, using any class parameter.
 \end{enumerate}
\begin{figure}[h]\rm 
  \begin{tikzpicture}[scale=.8,xscale=2,vertex/.style={draw,circle,black,scale=.6,fill=white,line width=.5pt}]
     \draw  (180:2) node[vertex] (1) {1}
            (135:2) node[vertex] (2) {2}
            (90:2) node[vertex] (3) {3}
            (-.67,1.33) node[vertex] (5) {5}
            (45:2) node[vertex] (4) {4}
            (-1.33,0) node[vertex] (6) {6}
            (-.67,0) node[vertex] (7) {7}
            (-90:2) node[vertex] (8) {8}
            (0,0) node[vertex] (9) {9}
            (-45:2) node[vertex,scale=.85] (10) {10}
            (.67,0) node[vertex,scale=.85] (11) {11}
            (0:2) node[vertex,scale=.85] (12) {12}
            ;
     \draw[-{>[scale=.5]},>=Stealth,thick]
                        (1) edge[{<[scale=.5]}-{>[scale=.5]},>=Stealth,bend right=60,looseness=1.5] node
                            {\hyperref[Section.Boolean-completions]{\phantom{xx}}} (6)
                        (12.100) edge[bend right=16] node {\hyperlink{ETRord-Uniform-infinitary}{\phantom{xxx}}} (4)
                        (1.-80) edge[red!70!black,bend right=41]
                              node {\hyperref[Section.Forcing-theorem-implies-truth-for-L_Ord,omega]{\phantom{xxx}}} (8.west)
                        (4) edge[blue!70!black,bend right=16]
                              node {\hyperlink{ETRord-Uniform-infinitary}{\phantom{xxx}}} (3.east)
                        (3) edge[blue!70!black,bend right=16]
                              node {\hyperlink{ETRord-Uniform-infinitary}{\phantom{xxx}}} (2)
                        (2) edge[blue!70!black,bend right=16]
                              node {\hyperlink{ETRord-Uniform-infinitary}{\phantom{xxx}}} (1)
                        (3) edge[{<[scale=.5]}-{>[scale=.5]},>=Stealth,bend left=40,looseness=1.2]
                              node {\hyperref[Section.Forcing-relation-as-name-for-truth-predicate]{\phantom{xxx}}} (5)
                        (9) edge[blue!70!black,bend right=30] node
                             {\hyperref[Section.ETRord-implies-truth-for-L_Ord,Ord]{\phantom{xxx}}} (8.120)
                        (8) edge[red!70!black,bend right=16] node
                              {\hyperref[Theorem.Truth-Ord-omega-implies-Iterated-truth]{\phantom{xxx}}} (10)
                        (7) edge[bend right=25,out=-35]
                              node {\hyperref[Section.Class-join-separation]{\phantom{xxx}}} (8.150)
                        (12.110) edge[bend right=40,dash pattern=on 6pt off
                                      1pt,thin,looseness=.95,OliveGreen!50!black,out=-70] node {\hyperref[Theorem.ETRord-implies-forcing-relation-atomic]{\phantom{xxx}}} (1)
                        (12.125) edge[bend right=40,out=-55]
                              node {\hyperref[Section.Class-join-separation]{\phantom{xxx}}}  (7)
                        (12.150) edge[bend right=40,out=-40] node
                              {\hyperref[Section.ETRord-implies-truth-for-L_Ord,Ord]{\phantom{xxx}}} (9)
                        (12.175) edge[bend right=20]
                             node {\hyperref[Theorem.ETRord-implies-clopen-determinacy]{\phantom{xxx}}} (11)
                        (11) edge[red!70!black,bend right=25,out=-35] node {\hyperref[Theorem.Clopen-determinacy-implies-truth]{\phantom{xxx}}} (8.north)
                        (10) to[red!70!black,bend right=16] node {\hyperref[Section.Ord-iterated-truth-implies-ETRord]{\phantom{xxx}}} (12)
                        ;
  \end{tikzpicture}
\end{figure}
\end{maintheorem*}%
We shall prove the theorem by establishing the complete cycle of implications pictured above. For convenience, the implication diagram is clickable, with each implication arrow linking to the corresponding theorem where the implication is proved. In addition, all subsequent mentions of an implication, such as \hbox{$\ref{Main.Truth-Ord-omega}\to\ref{Main.Iterated-truth}$}, link to the corresponding theorem statements. Precise details for the terms appearing in the theorem statements appear in the various sections below where the corresponding implications are proved. The red implication arrows indicate more substantial or critical implication arguments; blue arrows correspond to the essentially immediate implications; and the dashed green arrow indicates implication $\ref{Main.ETRord}\to\ref{Main.atomic}$, which although not needed to complete the cycle, is nevertheless used in the proof of theorem~\ref{Theorem.ETRord-implies-uniform-forcing-relation}, establishing the implication $\ref{Main.ETRord}\to\ref{Main.Uniform-infinitary}$.

We should like particularly to emphasize that statement~\ref{Main.scheme} is a scheme over the formulas $\varphi$ that are finite in the meta-theory, stating as a scheme that for each such formula, there is a forcing relation class that works with that formula and its subformulas. Statements~\ref{Main.Uniform-first-order} and~\ref{Main.Uniform-infinitary}, in constrast, are not schemes, but assert in each case that there is a single uniform relation that works with all formulas simultaneously. In the set-forcing \ZFC\ context, we are used to having the forcing relations $p\forces\varphi(\vec\tau)$ available only as a scheme, a separate relation for each formula $\varphi$, and because of Tarski's theorem on the non-definability of truth, it follows that one cannot prove in \ZFC\ that there is a single unified forcing relation $\forces_\P$ that works with all formulas (although by theorem~\ref{Theorem.Atomic-to-quantifier-free-forcing-relation} one can do this in the quantifier-free infinitary case). In the case of class forcing, however, our main theorem shows that if every class forcing notion $\P$ admits forcing relations merely for atomic formulas, then in fact they all have fully uniform forcing relations $\forces_\P$, and not only for the first-order forcing languages, but for the infinitary forcing languages, as in statement~\ref{Main.Uniform-infinitary}.

Let us remark specifically on the role of the global axiom of choice in this analysis. The base theory for the subject is \Godel-Bernays set theory \GBC, which includes the global axiom of choice, the assertion that there is a class well-ordering of the universe. But actually, none of the arguments in the proof of the main theorem require the global axiom of choice, as opposed to the ordinary axiom of choice for sets, with the exception of the implications to the clopen determinacy assertion of statement~\ref{Main.Clopen-determinacy}. In particular, our arguments show that all the statements in the main theorem except statement~\ref{Main.Clopen-determinacy} are equivalent over the theory \GB+\AC, which has the axiom of choice only for sets. Meanwhile, the clopen determinacy assertion of statement~\ref{Main.Clopen-determinacy} in the main theorem implies the global axiom of choice, by the folklore result mentioned in~\cite[theorem~4]{GitmanHamkins2016:OpenDeterminacyForClassGames}, and so global choice is required if one wants to include statement~\ref{Main.Clopen-determinacy}.

For definiteness, in this article we say $\P$ is a \emph{class forcing notion} if $\P$ is a separative class pre-order.
Note that the axiom of global choice allows us to form the separative quotient of any class pre-order.

\section{$\ETRord$ implies the class forcing theorem scheme}\label{Section.ETR_Ord-implies-forcing-theorem}

In this section, we shall prove the implications $\ref{Main.ETRord}\to\ref{Main.atomic}$ and \hbox{$\ref{Main.atomic}\to\ref{Main.scheme}$} in the main theorem. Let's begin by defining the notions carefully.

\begin{definition}\label{Definition.ETRord}\rm
The principle of \emph{elementary transfinite recursion} $\ETRord$ for recursions of length $\Ord$, is the scheme asserting of any first order formula $\varphi(x,X,A)$ with a class parameter $A$, that there is a class $S\of \Ord\times V$ that is a solution of the following recursion
    $$S_\alpha=\set{x\mid \varphi(x,S\restrict \alpha,A)},$$
where $S_\alpha=\set{x\mid \<\alpha,x>\in S}$ denotes the $\alpha^{\rm th}$ slice of $S$ and  $S\restrict\alpha=S\intersect(\alpha\times V)$ is the part of the solution prior to stage $\alpha$.
\end{definition}

Thus, the axiom asserts that we may undertake transfinite recursive definitions of classes by recursions of length $\Ord$. In general, in \GBC\ one may not necessarily undertake class recursions even merely of length $\omega$, since first-order truth, of course, is defined by the Tarskian recursion on formulas, and this recursion has length merely $\omega$; but in \GBC, if consistent, one cannot prove the existence of a truth-predicate for first-order truth and therefore this recursion may have no solution. So even $\ETR_\omega$, which asserts that class recursions of length $\omega$ have solutions, is not provable in \GBC, if \GBC\ is consistent, and consequently neither is $\ETRord$.

Meanwhile, $\ETRord$ is a consequence of the full principle $\ETR$ of elementary transfinite recursion, which allows recursion along any class well-founded relation, including relations much taller than $\Ord$, and this principle is strictly weaker than $\GBC+\Pi^1_1$-comprehension~\cite{Sato2014:Relative-predicativity-and-dependent-recursion-in-second-order-set-theory-and-higher-order-theories}, which is itself strictly weaker than Kelley-Morse set theory \KM. Gitman and Hamkins~\cite{GitmanHamkins2016:OpenDeterminacyForClassGames} proved that \ETR\ is equivalent over \GBC\ to the principle of determinacy for clopen class games.

The idea of~\cite[lemma~7]{GitmanHamkins2016:OpenDeterminacyForClassGames} shows that the principle $\ETRord$ is equivalently formulated in terms of recursions along arbitrary well-founded class relations of rank $\Ord$.

\begin{definition}\label{Definition.Forcing-relation-atomic}\rm A class forcing notion $\P$ \emph{admits forcing relations for atomic formulas}, if there are relations $$p\forces\sigma\in\tau,\qquad\qquad p\forces\sigma\of\tau,\qquad\qquad p\forces \sigma=\tau$$
which obey the following recursive properties:
\begin{enumerate}[(a)]
  \item $p\forces \sigma\in\tau$ if and only if there is a dense class of conditions $q\leq p$ for which there is $\<\rho,r>\in\tau$ with $q\leq r$ and $q\forces \sigma=\rho$.
  \item $p\forces \sigma=\tau$ if and only if $p\forces\sigma\of\tau$ and $p\forces \tau\of\sigma$.
  \item $p\forces \sigma\of\tau$ if and only if whenever $\<\rho,r>\in\sigma$ and $q\leq p,r$ then $q\forces\rho\in\tau$.

\end{enumerate}
\end{definition}

Since the formulas are distinguished syntactically, we may unify the three forcing relations into a single relation $\forces$, applied to any atomic assertion. One may take statement (c) as a definition of the relation $p\forces\sigma\of\tau$ in terms of the forcing relation for $\in$, and in this case statements (a) and (b) are expressible as a recursion solely in terms of $p\forces\sigma\in\tau$ and $p\forces\sigma=\tau$. So the use of $\of$ here is merely a convenience.

\begin{theorem}\label{Theorem.ETRord-implies-forcing-relation-atomic}
  The principle $\ETRord$ of elementary transfinite recursion for class recursions of length $\Ord$ implies that every class forcing notion $\P$ admits a forcing relation for atomic formulas.
\end{theorem}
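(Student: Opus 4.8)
The plan is to recognize clauses (a)--(c) of Definition~\ref{Definition.Forcing-relation-atomic} as a single transfinite recursion on names and to run it using $\ETRord$ in its equivalent formulation, noted above, as recursion along a set-like well-founded class relation of rank $\Ord$. The objects of the recursion are the \emph{atomic forcing statements}, that is, the triples $(t,\sigma,\tau)$ where $t\in\{\mathord{\in},\mathord{=},\mathord{\of}\}$ is a relation symbol and $\sigma,\tau$ range over the set $\P$-names; the datum to be computed at each such statement is the class of conditions $p$ forcing it. The forcing notion $\P$ together with its order $\leq_\P$ will be folded into the class parameter $A$.

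First I would fix a measure witnessing that this really is a recursion of rank $\Ord$. For names $\sigma,\tau$ let $\rank(\sigma),\rank(\tau)$ denote their ranks in the name hierarchy, let $\oplus$ denote the natural (Hessenberg) sum of ordinals, and assign type weights $w(\mathord{=})=2$, $w(\mathord{\of})=1$, $w(\mathord{\in})=0$. To each statement $(t,\sigma,\tau)$ assign the measure
$$\mu(t,\sigma,\tau)=\text{the ordinal corresponding to }\bigl(\rank(\sigma)\oplus\rank(\tau),\,w(t)\bigr)$$
under the order isomorphism of $\Ord\times 3$, ordered lexicographically with the first coordinate primary, onto $\Ord$. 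The point is that each clause reduces a statement only to statements of strictly smaller measure: clause (a) reduces $(\mathord{\in},\sigma,\tau)$ to statements $(\mathord{=},\sigma,\rho)$ for $\langle\rho,r\rangle\in\tau$, and $\rank(\rho)<\rank(\tau)$ yields $\rank(\sigma)\oplus\rank(\rho)<\rank(\sigma)\oplus\rank(\tau)$, a strict drop in the first coordinate; clause (c) reduces $(\mathord{\of},\sigma,\tau)$ to statements $(\mathord{\in},\rho,\tau)$ for $\langle\rho,r\rangle\in\sigma$, again with a strict drop since $\rank(\rho)<\rank(\sigma)$; and clause (b) reduces $(\mathord{=},\sigma,\tau)$ to $(\mathord{\of},\sigma,\tau)$ and $(\mathord{\of},\tau,\sigma)$, leaving the first coordinate fixed but dropping the weight from $2$ to $1$. (A cruder measure such as $\max(\rank\sigma,\rank\tau)$ fails in clause (a), which is why the natural sum is needed.) Thus $\mu$ strictly decreases along the induced relation $\prec$, so $\prec$ is well-founded; since the immediate $\prec$-predecessors of a statement are indexed by $\dom(\sigma)$ and $\dom(\tau)$, which are sets, the relation is set-like; and since names occur at ranks unbounded in $\Ord$ while every $\mu$-value lies below $\Ord$, the relation has rank exactly $\Ord$.

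With this structure in hand I would feed clauses (a)--(c) into $\ETRord$, recursing along $\prec$. The recursion rule assigns to a statement $(t,\sigma,\tau)$, given the part of the solution already computed on its $\prec$-predecessors, the class of conditions $p$ for which the corresponding clause holds, with each occurrence of the forcing relation read off the computed part. Because the clauses refer only to strictly earlier statements, this rule is a genuine first-order condition over the class parameters $\P$, $\leq_\P$, and the prior solution: for instance clause (a) is the first-order assertion that the conditions $q\leq p$ admitting some $\langle\rho,r\rangle\in\tau$ with $q\leq r$ and $q\forces\sigma=\rho$ already recorded are dense below $p$, and similarly for (b) and (c). Applying $\ETRord$ yields a solution class $S$, and setting $p\forces\sigma\in\tau$ to mean that $\langle(\mathord{\in},\sigma,\tau),p\rangle\in S$, and likewise for $\mathord{=}$ and $\mathord{\of}$, produces relations satisfying (a)--(c) by construction. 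Hence $\P$ admits forcing relations for atomic formulas.

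I expect the main obstacle to be precisely the ordinal bookkeeping of the second paragraph: one must choose a measure along which all three mutually recursive clauses descend simultaneously, and then verify both that $\prec$ is set-like and that its rank is exactly $\Ord$, so that $\ETRord$ — rather than some longer transfinite recursion — suffices. Once the measure is correct, translating the clauses into the recursion schema and checking that the resulting class satisfies (a)--(c) are routine.
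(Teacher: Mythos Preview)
Your proposal is correct and follows essentially the same approach as the paper: recognize that Definition~\ref{Definition.Forcing-relation-atomic} specifies a class recursion on names, exhibit an $\Ord$-valued rank along which the clauses descend, and invoke $\ETRord$. The only difference is bookkeeping---the paper ranks pairs $\langle\sigma,\tau\rangle$ by the maximum of their name-ranks and then lexicographically (treating $\subseteq$ as a mere abbreviation, so that both $\in$ and $=$ on a given pair depend only on strictly earlier pairs), whereas you use the Hessenberg sum together with a type weight; both rankings work, and your parenthetical remark that ``$\max$ fails'' is right only for $\max$ unaugmented, not for the paper's $\max$-then-lex refinement.
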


This will establish implication $\ref{Main.ETRord}\to\ref{Main.atomic}$ in the main theorem.

\begin{proof}
The main point is that having a forcing relation for atomic formulas, by definition, is to have a solution of the recursion that is expressed by definition~\ref{Definition.Forcing-relation-atomic}. Since this is an $\in$-recursion on the $\P$-names, we may organize it as a recursion of length $\Ord$, using the natural $\Ord$-ranking of pairs of names $\<\sigma,\tau>$, which are ordered first by the maximum of their ranks and then lexically by rank. We may now place the forcing relation $p\forces\sigma\in\tau$ and $p\forces \sigma=\tau$ on the $\alpha^{\rm th}$ slice, when the pair $\<\sigma,\tau>$ has rank $\alpha$ with respect to that relation; the forcing relation for such names is defined in terms of the forcing relation on preceding pairs of names. So $\ETRord$ is sufficient to find a solution of the recursion.
\end{proof}

Now, let us explain how to extend the forcing relation beyond the atomic formulas. A class $\P$-name $\dot\Gamma$ is simply a class of pairs $\<\rho,p>$, where $\rho$ is a $\P$-name and $p\in\P$; so, it is a class that is a $\P$-name. The canonical name for the generic filter is the class $\dot G=\set{\<\check p,p>\mid p\in\P}$. We denote by $\mathcal{L}_{\omega,\omega}(\in,V^\P,\dot\Gamma_0,\ldots,\dot\Gamma_m)$ the usual first-order forcing language, allowing the names $\sigma\in V^\P$ as constants and allowing finitely many class name parameters $\dot\Gamma_i$. This notation will mesh with the more general notation we introduce in definition~\ref{Definition.Infinitary languages} for the various infinitary languages.

\begin{definition}\label{Definition.Forcing-relation-scheme}\rm
A class forcing notion $\P$ admits a forcing relation for a collection of first-order formulas, closed under subformulas, if there is a relation $\forces$ obeying the following recursive properties, for the formulas on which it is defined:
\begin{enumerate}[(a)]
  \item The forcing relation $\forces$ is defined on atomic formulas $\sigma=\tau$ and $\sigma\in\tau$ in accordance with definition~\ref{Definition.Forcing-relation-atomic};
  \item For a class name $\dot\Gamma$, we have $p\forces\sigma\in\dot\Gamma$ if and only if there is a dense class of $q\leq p$ for which there is $\<\tau,r>\in\dot\Gamma$ with $q\leq r$ and $q\forces\sigma=\tau$.
  \item $p\forces \varphi\wedge\psi$ if and only if $p\forces\varphi$ and $p\forces\psi$;
  \item $p\forces \neg\varphi$ if and only if there is no $q\leq p$ with $q\forces \varphi$; and
  \item $p\forces \forall x\, \varphi(x)$ if and only if $p\forces\varphi(\tau)$ for every $\P$-name $\tau$.
\end{enumerate}
\end{definition}

What we mean is that in each case, if the left-hand side of the equivalence is defined, then the relations appearing on the right hand side are defined and furthermore, are defined in such a way that fulfills the equivalence. We say that $\P$ admits a forcing relation for a formula $\varphi$, if it admits a forcing relation defined on a collection of formulas including all instances of $\varphi(\vec\tau)$ for any choice of $\P$-names $\vec\tau$.

We should like particularly to emphasize that this definition, as well as the definition of what it means to have a forcing relation for atomic formulas, makes no reference whatsoever to generic filters or to genericity of any kind or to the truth of any formula in any forcing extension. Rather, for a forcing relation to exist means, by definition, precisely that there is a class relation $\forces_\P$ exhibiting the recursive properties expressed in definitions~\ref{Definition.Forcing-relation-atomic} and~\ref{Definition.Forcing-relation-scheme} (and later, for the infinitary language case, definition~\ref{Definition.Forcing-relation-infinitary}). These recursive properties are entirely first-order expressible properties of the class relation $\forces_\P$, and the question of whether $\forces_\P$ has those properties can be answered entirely in the ground model. The question for a given forcing notion $\P$ is whether or not there is indeed a class relation $\forces_\P$ exhibiting those recursive properties.

\begin{theorem}\label{Theorem.Atomic-implies-scheme}
 If a forcing notion $\P$ admits a forcing relation for atomic formulas, then it admits a forcing relation for any particular first-order formula $\varphi$ in the forcing language $\mathcal{L}_{\omega,\omega}(\in,V^\P,\dot\Gamma_0,\ldots,\dot\Gamma_m)$.
\end{theorem}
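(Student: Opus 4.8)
The plan is to proceed by an external induction on the finitely many subformulas of the fixed formula $\varphi$. Because $\varphi$ is a concrete formula of the meta-theory, it has only finitely many subformulas $\varphi_0,\ldots,\varphi_n=\varphi$, which I list in order of nondecreasing syntactic complexity. I will build the forcing relation along this finite list one formula at a time, at each stage producing a class relation satisfying the relevant clause of Definition~\ref{Definition.Forcing-relation-scheme}, with the finitely many class names $\dot\Gamma_0,\ldots,\dot\Gamma_m$ held fixed as class parameters throughout.

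For the base case, the hypothesis supplies a class relation satisfying clause~(a), namely the forcing relation for the atomic formulas $\sigma=\tau$ and $\sigma\in\tau$ of Definition~\ref{Definition.Forcing-relation-atomic}. For an atomic formula $\sigma\in\dot\Gamma_i$ involving a class name, clause~(b) gives an explicit definition of $p\forces\sigma\in\dot\Gamma_i$ in terms of the already-available relation $p\forces\sigma=\tau$: it holds just in case there is a dense class of $q\leq p$ admitting $\<\tau,r>\in\dot\Gamma_i$ with $q\leq r$ and $q\forces\sigma=\tau$. Density here unfolds as $\forall s\leq p\,\exists q\leq s\,(\cdots)$, a first-order assertion whose only class parameters are the atomic forcing relation and $\dot\Gamma_i$; so by the first-order class comprehension of \GBC\ this relation exists as a class.

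The inductive steps are analogous. If $\varphi_k$ is $\psi\wedge\chi$, $\neg\psi$, or $\forall x\,\psi(x)$, where forcing relations for the earlier, simpler subformulas $\psi,\chi$ have already been obtained as classes, then clauses~(c), (d), (e) define the forcing relation for $\varphi_k$ by a first-order formula whose class parameters are those previously constructed relations together with $\dot\Gamma_0,\ldots,\dot\Gamma_m$. The only point requiring care is that each defining condition is genuinely first-order, quantifying only over sets: the negation clause asserts that there is no $q\leq p$ with $q\forces\psi$, and the quantifier clause asserts that $p\forces\psi(\tau)$ for every $\P$-name $\tau$, which is a quantifier over the set-sized names $\tau\in V^\P$, not over classes. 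Granting this, first-order class comprehension again produces the desired relation, and one checks directly that it satisfies the corresponding equivalence.

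After $n+1$ such applications of comprehension we have a single class relation $\forces$ defined on all subformulas of $\varphi$ and satisfying every clause of Definition~\ref{Definition.Forcing-relation-scheme} on them, which is precisely a forcing relation for $\varphi$. The essential feature of the argument, and the reason it yields statement~\ref{Main.scheme} as a scheme rather than the uniform relation of statement~\ref{Main.Uniform-first-order}, is that the recursion on formula complexity is carried out externally and is finite for each $\varphi$: we invoke comprehension a fixed finite number of times determined by $\varphi$. There is no obstacle to completing any one such finite induction in \GBC, whereas assembling all of them into a single relation simultaneously would amount to a first-order truth predicate and is blocked by Tarski's theorem; that uniform version is exactly what requires the stronger hypotheses treated later via $\ETRord$.
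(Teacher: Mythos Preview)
Your proof is correct and follows essentially the same approach as the paper: a meta-theoretic induction over the finitely many subformulas of $\varphi$, applying the recursive clauses of Definition~\ref{Definition.Forcing-relation-scheme} one at a time. Your version is more explicit about the role of first-order class comprehension in \GBC\ at each step and about why the argument yields only a scheme, but this is elaboration rather than a different route.
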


This establishes $\ref{Main.atomic}\to\ref{Main.scheme}$ in the main theorem, which although not necessary for the main cycle of implications, will be used in subsequent arguments.

\begin{proof} This is also proved in~\cite{HolyKrapfLuckeNjegomirSchlicht2016:Class-forcing}. This is a theorem scheme, proved by meta-theoretic induction on the formula $\varphi$. Given the forcing relation $\forces$ defined on atomic formulas $\sigma=\tau$ and $\sigma\in\tau$, one may proceed simply to define the forcing relation for any given first-order formula in the meta-theory. For any particular formula $\varphi$, we may form the finite set of subformulas of $\varphi$, plus the atomic formulas, and then simply apply the recursive definitions expressed by the requirements of definition~\ref{Definition.Forcing-relation-scheme}. For an actual formula, this recursion takes place in the meta-theory, and so we get the desired forcing relation in finitely many recursive steps.
\end{proof}

Because the induction in the proof of theorem~\ref{Theorem.Atomic-implies-scheme} takes place in the meta-theory, the result applies only to formulas $\varphi$ that are finite in the meta-theory. We cannot use this theorem directly, for example, to get a forcing relation for nonstandard formulas in a nonstandard model of \GBC. Nevertheless, the main theorem shows that the principle $\ETRord$ implies that indeed we can have a uniform forcing relation applying to such formulas in such a model, as in statement~\ref{Main.Uniform-first-order} of the main theorem.

\begin{lemma}\label{Lemma.Forcing-relations-respects-logical-consequence}
Suppose that $\P$ is a class forcing notion and $\forces$ is a forcing relation defined on the relevant formulas.
 \begin{enumerate}
   \item If $p\forces\varphi$ and $q\leq p$, then $q\forces\varphi$.
   \item If it is dense below $p$ to force $\varphi$, then $p\forces\varphi$.
   \item If $\varphi\to\psi$ is a logical validity and $p\forces\varphi$, then $p\forces\psi$.
 \end{enumerate}
\end{lemma}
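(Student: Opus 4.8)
The plan is to establish each of the three parts by appealing to the defining recursive properties of the forcing relation, working directly from definitions~\ref{Definition.Forcing-relation-atomic} and~\ref{Definition.Forcing-relation-scheme}. For part (1), which asserts that forcing is preserved under strengthening of the condition, I would argue by induction on the structure of $\varphi$, tracking through the recursive clauses. The atomic and negation cases are the crucial ones: for $p \forces \sigma \in \tau$, the witnessing dense class below $p$ remains dense below any $q \leq p$, so the clause is inherited; the conjunction and universal cases are immediate from the induction hypothesis applied componentwise. The negation case $p \forces \neg\varphi$ is handled by noting that if no $r \leq p$ forces $\varphi$, then \emph{a fortiori} no $r \leq q$ forces $\varphi$ whenever $q \leq p$.

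For part (2), that density of forcing $\varphi$ below $p$ implies $p \forces \varphi$, I would reduce this to the negation clause. The key observation is that if it is dense below $p$ to force $\varphi$, then there can be no $q \leq p$ with $q \forces \neg\varphi$, since by the negation clause such a $q$ would force that no $r \leq q$ forces $\varphi$, contradicting the assumed density. Applying the negation clause twice---that is, using $p \forces \neg\neg\varphi \iff p \forces \varphi$, which itself follows from the negation clause---yields $p \forces \varphi$. Alternatively, for the atomic relations one checks density directly against the dense-class formulation in definition~\ref{Definition.Forcing-relation-atomic}, observing that a dense class of conditions each forcing $\varphi$ assembles into the required dense class of witnesses.

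For part (3), that the forcing relation respects logical validity, I expect the argument to proceed by the completeness theorem for first-order logic combined with the behavior of $\forces$ on the logical connectives. If $\varphi \to \psi$ is a validity, then $\psi$ is derivable from $\varphi$ in a finite proof using the propositional and quantifier rules, and I would verify that $\forces$ respects each such rule---modus ponens, generalization, and the logical axioms---using parts (1) and (2) together with the recursive clauses. The cleanest route is to show that the collection of conditions forcing each line of the derivation is appropriately closed, so that $p \forces \varphi$ propagates down the proof to $p \forces \psi$.

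The main obstacle, I expect, will be part (3), since establishing that $\forces$ respects arbitrary logical validities requires some care about which proof system one invokes and how the forcing relation interacts with the quantifier rules in the infinitary setting---one wants the argument uniform enough to cover the formulas actually at play. A convenient reduction is to verify directly that $\forces$ satisfies the semantic clauses of a Boolean-valued or dense-set interpretation, from which respect for validity follows by a standard soundness argument; parts (1) and (2) are precisely the monotonicity and density lemmas that make this interpretation coherent, so I would prove them first and then deploy them in the proof of part (3).
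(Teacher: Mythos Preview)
Your overall plan---prove (1) and (2) by induction on formulas after handling the atomic base case, then prove (3) by induction on proofs over a Hilbert-style system using (1) and (2)---matches the paper's approach closely, and your treatment of (1) and (3) is essentially what the paper does.

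There is, however, a genuine circularity in your primary argument for (2). You propose to deduce $p\forces\varphi$ from ``dense below $p$ to force $\varphi$'' by passing through $p\forces\neg\neg\varphi$ and then invoking the equivalence $p\forces\neg\neg\varphi\iff p\forces\varphi$, which you say ``follows from the negation clause.'' But unwinding the negation clause twice shows that $p\forces\neg\neg\varphi$ is \emph{literally equivalent} to ``it is dense below $p$ to force $\varphi$''; so the implication $p\forces\neg\neg\varphi\Rightarrow p\forces\varphi$ is precisely statement (2) itself, and you cannot use it in its own proof. (There is also a minor issue that $\neg\neg\varphi$ need not lie in the subformula-closed collection on which $\forces$ is defined, so $p\forces\neg\neg\varphi$ may not even be meaningful.)

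The fix is exactly what you mention as an ``alternative'': establish (2) for the atomic relations $\sigma\in\tau$, $\sigma=\tau$, $\sigma\subseteq\tau$ directly, by induction on names, using the dense-class formulations in definition~\ref{Definition.Forcing-relation-atomic}. Then extend to compound formulas by induction on formula complexity, handling each connective via its recursive clause (the paper spells out the negation step: if it is dense below $p$ to force $\neg\varphi$, then any $q\leq p$ has some $r\leq q$ with $r\forces\neg\varphi$, whence by (1) $q$ cannot force $\varphi$, so $p\forces\neg\varphi$). Your alternative is not an alternative---it is the required base case, and the double-negation shortcut must be dropped.
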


\begin{proof}
Statements (1) and (2) are each proved for atomic formulas by induction on names, and then easily extended to all formulas by induction on formulas. For example, for the negation case of statement (2), if it is dense below $p$ to force $\neg\varphi$, then for any $q\leq p$ there is $r\leq q$ with $r\forces\neg\varphi$, which means by (1) that $q$ cannot force $\varphi$, and so $p\forces\neg\varphi$, as desired.

Statement (3) is proved by induction on proofs with respect to a standard deduction system. It is easy to verify, for example, that forcing respects \emph{modus ponens}:
 $$\text{if }p\forces\varphi\text{ and }p\forces\varphi\rightarrow \psi,\qquad\text{ then }p\forces \psi.$$
Forcing respects substitution, since by definition, $p\forces\forall x\varphi(x)$ just in case $p\forces\varphi(\tau)$ for any particular name $\tau$. Also, if $p\forces \varphi$ and $x$ is not a variable in $\varphi$, then $p\forces\forall x\varphi$.

One can prove by induction on names that every condition forces every instance of the atomic equality axioms:
\begin{align*}
    \sigma&=\sigma\\
    \sigma&=\tau\rightarrow \tau=\sigma\\
    \sigma&=\tau\rightarrow (\tau=\rho\rightarrow \sigma=\rho)\\
    \sigma&=\tau\rightarrow (\rho\in\sigma\leftrightarrow \rho\in\tau).
\end{align*}
We leave to the reader the further elementary exercises to prove for every condition $p$ that
\begin{align*}
  p&\forces\forall x(\varphi\rightarrow\psi)\rightarrow (\forall x\varphi\rightarrow\forall x\psi) \\
  p&\forces\varphi\rightarrow (\psi\rightarrow \varphi) \\
  p&\forces (\neg\psi\rightarrow\neg\varphi)\rightarrow (\varphi\rightarrow \psi) \\
  p&\forces [\varphi\rightarrow (\psi\rightarrow \theta)]\rightarrow[(\varphi\rightarrow\psi)\rightarrow(\varphi\rightarrow\theta)]
\end{align*}
In each case, one can prove the statement by considering the definition of what it means to be a forcing relation and by using statements (1) and (2) in a density argument. Since we have therefore observed that forcing respects the axioms and rules of a complete deduction system, statement (3) now follows by induction on proofs.
\end{proof}

The lemma also holds by essentially similar arguments for the infinitary languages we introduce in section~\ref{Section.Infinitary-languages}. For example, the forcing relation respects the infinitary conjunction rule and the infinitary quantifier rules.

\section{Constructing actual forcing extensions}

Before proceeding to the rest of the main theorem, we should like to clarify some issues concerning the forcing relation and class forcing and the construction of forcing extensions. It has been traditional to highlight what has been called the forcing theorem, which explains how the forcing relation interacts with truth in a forcing extension. What we would like to do here is to explain how that part of the forcing theorem is a consequence of the existence of the forcing relation, which we defined here as a solution of the forcing relation recursion. The central question with regard to a class forcing notion $\P$, therefore, becomes whether indeed one has such a forcing relation; for if one does, then it will interact with truth in the forcing extension in the desired manner.

In order to show this, let us first review how one constructs a forcing extension for a forcing notion $\P$. We shall explain how to construct forcing extensions of an arbitrary model $M$ of \GBC, without assuming that that model is countable or transitive.

Given a class forcing notion $\P$ inside a model $M$ of \GBC, let's say that a filter $G\of\P$ is $M$-generic, if $G$ meets every dense subclass $D\of\P$ that is in $M$. Suppose that we have a forcing relation $\forces$ available in $M$ for $\P$. For any such $G$, we may define the following relations on the $\P$-names available in $M$:
\begin{equation*}
  \begin{split}
     \sigma=_G\tau & \quad\Iff\quad \exists p\in G\quad p\forces\sigma=\tau \\
     \sigma\in_G\tau  & \quad\Iff\quad\exists p\in G\quad p\forces \sigma\in\tau.
  \end{split}
\end{equation*}
Equivalently, using the Boolean values described in theorem~\ref{Theorem.Atomic-iff-Boolean-completion}, we have defined
\begin{equation*}
  \begin{split}
     \sigma=_G\tau & \quad\Iff\quad \boolval{\sigma=\tau}\in G\\
     \sigma\in_G\tau  & \quad\Iff\quad \boolval{\sigma\in\tau}\in G.
  \end{split}
\end{equation*}
It follows easily using lemma~\ref{Lemma.Forcing-relations-respects-logical-consequence} that $=_G$ is an equivalence relation and a congruence with respect to $\in_G$. Let $M[G]$ denote the collection of equivalence classes $[\sigma]_{=_G}$, equipped with the relation induced by $\in_G$. This amounts to the same quotient procedure one undertakes with the Boolean-valued model approach to forcing and the Boolean ultrapower (discussed in~\cite{HamkinsSeabold:BooleanUltrapowers}).

It is well-known that $\<M[G],\in_G>$ is not necessarily a model of \ZFC, since with class forcing one can, for example, collapse all cardinals to $\omega$, which of course contradicts \ZFC; indeed, one can even collapse $M$ itself to become countable, as will happen with the forcing $\FA$ that we consider in section~\ref{Section.Forcing-theorem-implies-truth-for-L_Ord,omega}. Nevertheless, $\<M[G],\in_G>$ is a structure of some kind and has its theory, whatever it may be, and so it still makes sense to inquire about which statements in that theory are true in $M[G]$ or forced by a condition, and so on.

If we had equipped the forcing relation with a class name $\dot\Gamma$, then we define its extension in $M[G]$ by
 $$\Gamma([\sigma])\quad\Iff\quad \exists p\in G\ p\forces\sigma\in\dot\Gamma,$$
which is the same as saying that there is a dense class of conditions $q\leq p$ for which there is $\<\rho,r>\in\dot\Gamma$ with $q\leq r$ and $q\forces\sigma=\rho$. For example, if we use the canonical name of the generic filter $\dot G$, then we have $G([\sigma])$ just in case there is $p\in G$ with $p\forces\sigma\in\dot G$. If $G$ is $M$-generic, this is equivalent to saying that there is some $p,r\in G$ with $p\leq r$ and $p\forces\sigma=\check r$.

\begin{theorem}\label{Theorem.Truth-theorem}
 Suppose that $M$ is a model of \GBC\ with a class forcing notion $\P$ that admits a forcing relation in $M$ for a first-order formula $\varphi$, which is standard in the meta-theory, in a forcing language with finitely many class names $\dot\Gamma_i$ and $G\of\P$ is $M$-generic. Then
  $$\<M[G],\in_G,\Gamma_0,\ldots,\Gamma_n>\satisfies\varphi([\tau])\quad\text{ if and only if }\quad\exists p\in G\ p\forces\varphi(\tau).$$
\end{theorem}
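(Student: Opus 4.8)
The plan is to argue by induction on the structure of $\varphi$, which, being standard in the meta-theory, is a finite syntactic object; so this is a genuine finite meta-theoretic induction whose inductive hypothesis is the asserted biconditional for all proper subformulas. The whole point is that at each stage we invoke only the recursive clauses of definition~\ref{Definition.Forcing-relation-scheme}, the filter and genericity properties of $G$, and the monotonicity and density facts of lemma~\ref{Lemma.Forcing-relations-respects-logical-consequence}. The single delicate thread running through the argument is \emph{definability}: every dense class to which we appeal is definable over $M$ from the forcing-relation class together with the standard formula $\varphi$ and its finitely many name and class-name parameters, hence is a genuine class of $M$ that the $M$-generic filter $G$ must meet.

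For the atomic base cases the biconditional is essentially the definition of the quotient structure: by construction $\sigma =_G \tau$ holds iff some $p \in G$ forces $\sigma = \tau$, and $\sigma \in_G \tau$ holds iff some $p \in G$ forces $\sigma \in \tau$, while the well-definedness of $\langle M[G], \in_G\rangle$ on $=_G$-classes is exactly the congruence fact extracted from lemma~\ref{Lemma.Forcing-relations-respects-logical-consequence}. The atomic clause for a class name $\dot\Gamma_i$ is handled identically, since we defined $\Gamma_i([\sigma])$ to hold precisely when some $p \in G$ forces $\sigma \in \dot\Gamma_i$. The conjunction step uses only that $G$ is a filter: if $p,q \in G$ force $\varphi$ and $\psi$ respectively, then any common lower bound $r \in G$ forces both by part (1) of the lemma, hence forces $\varphi \wedge \psi$ by clause (c); the converse is immediate from clause (c) and part (1).

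The negation and universal steps are where genericity does the real work, and I expect these to be the main obstacle. For $\neg\varphi$, the key observation is that $D = \{p : p \forces \varphi \text{ or } p \forces \neg\varphi\}$ is dense: by clause (d), any $p$ below which no condition forces $\varphi$ already forces $\neg\varphi$, while otherwise some extension forces $\varphi$. Meeting $D$ with $G$ and applying the inductive hypothesis rules out the $\varphi$-alternative exactly when $M[G] \not\models \varphi$; the reverse direction combines part (1) of the lemma with the filter property to derive a contradiction from simultaneously forcing $\neg\varphi$ at some $p \in G$ and forcing $\varphi$ at another condition of $G$. For $\forall x\,\varphi(x)$, the forward direction uses clause (e) together with the fact that every element of $M[G]$ is named by some $\P$-name. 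The reverse direction again proceeds by density: from $M[G] \models \forall x\,\varphi$ one shows that $D = \{q : q \forces \forall x\,\varphi \text{ or } q \forces \neg\varphi(\tau) \text{ for some name } \tau\}$ is dense---given $p$ not forcing $\forall x\,\varphi$, clause (e) yields a name $\tau$ with $p$ not forcing $\varphi(\tau)$, whence by lemma~\ref{Lemma.Forcing-relations-respects-logical-consequence}(2) some $q \leq p$ has no extension forcing $\varphi(\tau)$, so $q \forces \neg\varphi(\tau)$ by clause (d)---and then genericity together with the already-established negation case forces the $\forall$-alternative.

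The thing I would be most careful to verify is precisely that each of these sets $D$ is a class of $M$: each is defined by a first-order formula using the forcing-relation class and the standard formula $\varphi$ with its finitely many parameters, so genericity legitimately applies to it. Because $\varphi$ is standard, the induction terminates in finitely many steps and never requires a uniform forcing relation---only the finitely many forcing relations for $\varphi$ and its subformulas that are guaranteed by the hypothesis.
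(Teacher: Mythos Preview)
Your proposal is correct and follows essentially the same meta-theoretic induction on $\varphi$ as the paper, with the same density arguments for negation and the universal quantifier. The only minor variation is that for conjunction you use the filter property of $G$ directly (taking a common extension of two forcing conditions), whereas the paper phrases this too as a density argument; your added emphasis on verifying that each dense class $D$ is actually a class of $M$ is a worthwhile point that the paper leaves implicit.
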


\begin{proof}
This is proved by induction in the meta-theory on the formula $\varphi$, which is why we need $\varphi$ to be standard in the meta-theory. The theorem is true for atomic formulas basically by definition of the relations $=_G$ and $\in_G$ and the definitions of the extensions $\Gamma_i$ in $M[G]$ for any of the class names $\dot\Gamma_i$. If the theorem statement is true for $\varphi$, then it is also true for $\neg\varphi$, since the class of conditions $p$ with $p\forces\varphi$ or $p\forces\neg\varphi$ is easily seen to be dense, and so there is such a condition in $G$. If the theorem statement is true for $\varphi$ and $\psi$, then it is also true for the conjunction $\varphi\wedge\psi$, because the class of conditions $p$ that either force both $\varphi$ and $\psi$ or else force one of the negations $\neg\varphi$ or $\neg\psi$ is dense, and so there is such a $p\in G$, which then gives the theorem for the conjunction. If the theorem is true for $\varphi$, then it is true for $\forall x\, \varphi(x)$, since the class of conditions $p$ that either force $\neg\varphi(\tau)$ for some $\tau$ or force $\varphi(\tau)$ for all such names $\tau$, is dense, and so there is such a condition $p\in G$, which then gives the theorem for $\forall x\, \varphi(x)$, as desired.
\end{proof}

The argument can just as easily handle assertions $\varphi$ in the infinitary languages $\mathcal{L}_{\Ord,\Ord}(\in,A)$, introduced in definitions~\ref{Definition.Infinitary languages} and~\ref{Definition.Forcing-relation-infinitary}, provided that the formulas are actually well-founded in the meta-theory.

We should like to emphasize that although theorem~\ref{Theorem.Truth-theorem} works with nonstandard models $M$ of \GBC, we do still have a standardness assumption for the formula $\varphi$, since one proves the theorem by induction on formulas in the meta-theory. We shall later explain how one can move beyond that to a setting accommodating nonstandard formulas $\varphi$ by using theorem~\ref{Theorem.Forcing-relation-as-name-for-truth-predicate} to construct suitable truth predicates in the extension.

\section{Forcing relation as name for a truth predicate}\label{Section.Forcing-relation-as-name-for-truth-predicate}

We shall now prove the equivalence $\ref{Main.Uniform-first-order}\iff\ref{Main.Names-for-truth-predicate}$ in the main theorem. Let us begin by making precise what it means to have a truth predicate for first-order truth.

\begin{definition}\label{Definition.Truth-predicate-first-order}\rm
A \emph{truth predicate} for first-order truth (also known as a \emph{satisfaction class}), with a class parameter $A$, is a class $\T$ consisting of pairs $\<\varphi,\vec a>$, where $\varphi$ is a formula in the first-order language of set theory augmented with a predicate symbol $\hat A$ for the class $A$ and $\vec a$ is a valuation mapping the free variables of $\varphi$ to corresponding set parameters, such that the following recursion is satisfied:
\begin{enumerate}[(a)]
  \item $\T$ judges the truth of $\{=,\in,\hat A\}$-atomic statements correctly:
            \begin{align*}
              \T(x=y,\<a,b>) &\quad \text{ if and only if }\quad a=b \\
              \T(x\in y,\<a,b>) &\quad \text{ if and only if } \quad a\in b  \\
              \T(x\in\hat A,a)\quad &\quad \text{ if and only if }\quad a\in A
            \end{align*}
  \item $\T$ performs Boolean logic correctly:
            \begin{align*}
              \T(\varphi\wedge\psi,\vec a)&\quad \text{ if and only if }\quad \T(\varphi,\vec a)\text{ and }\T(\psi,\vec a)\\
              \T(\neg\varphi,\vec a)\ &\quad \text{ if and only if }\quad \neg\T(\varphi,\vec a)
            \end{align*}
  \item $\T$ performs quantifier logic correctly:
            $$\T(\forall x\, \varphi,\vec a)\quad\text{ if and only if }\quad\forall b\, \T(\varphi,b\concat\vec a)$$
\end{enumerate}
\end{definition}

When a truth predicate exists in a model of \GBC, then it is unique, since there cannot be a least formula where the disagreement occurs. Nevertheless, classical results of Krajewski~\cite{Krajewski1974:MutuallyInconsistentSatisfactionClasses} show that there are (necessarily nonstandard) models of \ZFC\ that admit different incompatible truth predicates as $\ZFC$-amenable classes (see discussion and further results in~\cite{HamkinsYang:SatisfactionIsNotAbsolute}). Similar reasoning produces models of \ZFC\ with different incompatible forcing relations for a given forcing notion, even for set forcing, each of them $\ZFC$-amenable but not jointly $\ZFC$-amenable.

Let us introduce the notation $\mathop{\rm op}(\sigma,\tau)$ for the canonical name for the ordered pair of $\sigma$ and $\tau$; technically, if we use the Kuratowski definition of the ordered pair, then $\mathop{\rm op}(\sigma,\tau)=\{\<\{\<\sigma,\one>\},\one>, \<\{\<\sigma,\one>,\<\tau,\one>\},\one>\}$.

\begin{theorem}\label{Theorem.Forcing-relation-as-name-for-truth-predicate}
 For any class forcing notion $\P$, and any finitely many class name parameters $\dot\Gamma_0,\ldots,\dot\Gamma_m$, the following are equivalent:
 \begin{enumerate}[\rm(i)]
   \item $\P$ admits a uniform forcing relation for the first-order forcing language  $\mathcal{L}_{\omega,\omega}(\in,\dot\Gamma_0,\ldots,\dot\Gamma_m)$.
   \item There is a class name $\dot\T$ and a forcing relation defined on the following statement, for which
      $$\one\forces\dot\T\text{ is a truth predicate for }\mathcal{L}_{\omega,\omega}(\in,\dot\Gamma_0,\ldots,\dot\Gamma_m).$$
 \end{enumerate}
\end{theorem}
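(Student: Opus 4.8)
The plan is to exploit the fact that a uniform forcing relation and a name for a truth predicate are two presentations of the same underlying class of data, namely the map sending a condition $p$ and a code for a formula-with-names $\varphi(\vec\tau)$ to the truth value of whether $p$ forces $\varphi(\vec\tau)$. I would convert each presentation into the other and check that the defining recursion of one mirrors the defining recursion of the other, clause by clause. The key external input is that ``$\dot\T$ is a truth predicate'' is a \emph{single} standard first-order assertion about the class name $\dot\T$, so that the whole statement can be forced by $\one$ even though $\dot\T$ must then govern all formulas in the sense of $M$, standard or not.

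For the implication $\text{(i)}\to\text{(ii)}$, assume $\P$ carries a uniform forcing relation $\forces$ for $\mathcal{L}_{\omega,\omega}(\in,\dot\Gamma_0,\ldots,\dot\Gamma_m)$. I would define the class name
$$\dot\T=\set{\<\op(\check\varphi,\vec\tau),p>\mid \varphi\text{ a formula code},\ \vec\tau\text{ a tuple of names},\ p\forces\varphi(\vec\tau)},$$
where $\op$ builds the canonical name for the pair consisting of the formula code and the valuation $\vec\tau$, and $\check\varphi$ is the check-name of $\varphi$. Using clause~(b) of Definition~\ref{Definition.Forcing-relation-scheme}, one computes (and this motivates the definition) that the extension of $\dot\T$ in any $M[G]$ holds of $[\op(\check\varphi,\vec\tau)]$ exactly when $\exists p\in G\ p\forces\varphi(\vec\tau)$, matching the truth theorem, Theorem~\ref{Theorem.Truth-theorem}. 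I would then verify that $\one$ forces each recursion clause~(a), (b), (c) of Definition~\ref{Definition.Truth-predicate-first-order}: the atomic clause reduces to Definition~\ref{Definition.Forcing-relation-atomic} together with clause~(b) of Definition~\ref{Definition.Forcing-relation-scheme} for the predicates $\dot\Gamma_i$; the conjunction clause reduces to clause~(c); the negation clause to~(d); and the quantifier clause to~(e); in each case the density bookkeeping is closed using Lemma~\ref{Lemma.Forcing-relations-respects-logical-consequence}.

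For the implication $\text{(ii)}\to\text{(i)}$, assume we are given $\dot\T$ and a forcing relation $\forces$ defined on the assertion ``$\dot\T$ is a truth predicate'' and hence, by the subformula-closure convention of Definition~\ref{Definition.Forcing-relation-scheme}, on all of its subformulas and their instances. I would define a candidate uniform relation by
$$p\forces^*\varphi(\vec\tau)\quad:\Iff\quad p\forces\op(\check\varphi,\vec\tau)\in\dot\T,$$
reading off the forced truth value of $\varphi(\vec\tau)$ from forced membership in $\dot\T$, and then show $\forces^*$ satisfies the recursion of Definition~\ref{Definition.Forcing-relation-scheme}. Because $\one$ forces $\dot\T$ to obey the truth-predicate recursion, each clause transfers: for example $\one\forces[\dot\T(\neg\varphi,\vec a)\iff\neg\dot\T(\varphi,\vec a)]$ lets me replace $p\forces\op(\check{\neg\varphi},\vec\tau)\in\dot\T$ by $p\forces\neg(\op(\check\varphi,\vec\tau)\in\dot\T)$, and the negation clause of the given relation then says this holds exactly when no $q\leq p$ forces $\op(\check\varphi,\vec\tau)\in\dot\T$, i.e.\ when no $q\leq p$ has $q\forces^*\varphi(\vec\tau)$, which is clause~(d) for $\forces^*$. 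The conjunction and quantifier clauses go the same way, and the atomic base case follows because $\one$ forces $\dot\T$ to judge atomic statements correctly, so that $\forces^*$ agrees on atomic formulas with the atomic forcing relation already present among the subformulas, as in Definition~\ref{Definition.Forcing-relation-atomic}.

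I expect the main obstacle to be the bookkeeping that keeps the two recursions aligned, in two respects. First, one must confirm that a forcing relation merely ``defined on the statement $\dot\T$ is a truth predicate'' genuinely supplies forcing values at all the needed instances $\op(\check\varphi,\vec\tau)\in\dot\T$ and their negations; this is exactly where the universal-quantifier clause~(e) and subformula closure of Definition~\ref{Definition.Forcing-relation-scheme} do the work, since the truth-predicate axioms quantify over all formula codes and valuations. Second, and more delicately, the whole point of using the uniform rather than the schematic forcing relation is that $\dot\T$ must be forced to be a truth predicate for all formulas in the sense of $M$, including nonstandard ones, so the verification in the $\text{(i)}\to\text{(ii)}$ direction cannot be carried out formula-by-formula in the meta-theory through Theorem~\ref{Theorem.Truth-theorem}, but must instead be established internally as a single forced first-order assertion, with the recursion clauses obtained uniformly by the density arguments of Lemma~\ref{Lemma.Forcing-relations-respects-logical-consequence}.
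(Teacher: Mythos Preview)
Your proposal is correct and follows essentially the same approach as the paper: in both directions you define the same objects (the name $\dot\T=\set{\<\op(\check\varphi,\vec\tau),p>\mid p\forces\varphi(\vec\tau)}$ for $\text{(i)}\to\text{(ii)}$, and the relation $p\forces^*\varphi(\vec\tau)\Iff p\forces\op(\check\varphi,\vec\tau)\in\dot\T$ for $\text{(ii)}\to\text{(i)}$) and verify clause-by-clause that the forcing-relation recursion and the truth-predicate recursion mirror one another. The one point worth making explicit, which the paper highlights, is that in $\text{(i)}\to\text{(ii)}$ the class name $\dot\T$ need not be among the $\dot\Gamma_i$, so to force the single first-order assertion ``$\dot\T$ is a truth predicate'' you invoke Theorem~\ref{Theorem.Atomic-implies-scheme} to extend the atomic forcing relation (which you have, since the uniform relation includes it) to this particular meta-theoretically standard formula with $\dot\T$ as a fresh class parameter; your obstacles paragraph shows you are aware of this, but it deserves an explicit citation rather than being folded into the density bookkeeping.
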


The main lesson of this argument should be that a uniform forcing relation essentially \emph{is} the name of a truth predicate.

\begin{proof}
$\rm(i)\to(ii)$ Suppose that we have a uniform forcing relation $\forces$ for $\P$ in the first-order forcing language $\mathcal{L}_{\omega,\omega}(\in,\dot\Gamma_0,\ldots,\dot\Gamma_m)$. Let
  $$\dot\T\quad=\quad\set{\<\mathop{\rm op}(\check\varphi,\vec\tau),p>\mid p\forces\varphi(\vec\tau)},$$
which is a class name for the pairs $\<\varphi,\vec a>$ such that $\varphi(\vec a)$ will be true in the extension. Since we have the atomic forcing relation, it follows by theorem~\ref{Theorem.Atomic-implies-scheme} that we may extend $\forces$ to cover any particular first-order statement using any other fixed class parameter. In particular, we can talk about whether specific assertions about $\dot\T$ are forced, even if $\dot\T$ is not one of the $\dot\Gamma_i$ (that is, theorem~\ref{Theorem.Atomic-implies-scheme} allows us to apply the forcing relation to specific individual assertions about $\dot\T$). Using this, we claim that
  $$\one\forces\dot\T\text{ is a first-order truth predicate}.$$
This is a first-order-expressible property of the predicate $\dot\T$.

The main point is that the recursive requirements on the forcing relation are exactly what one needs to prove that $\dot\T$ is (forced to be) a truth predicate. The atomic case is clear, because a condition $p$ forces (the canonical name of) $\<x\in y,\<\sigma,\tau>>$ to be in $\dot\T$ if and only if $p\forces\sigma\in\tau$, and similarly with $=$ and with any class name parameter $\dot\Gamma$. Since $p\forces\neg\varphi$ just in case no stronger condition $q\leq p$ forces $\varphi$, it follows that the class of conditions $q$ either forcing $\varphi$ or forcing $\neg\varphi$, but never both, is dense, and so $\one$ forces that $\dot\T$ exhibits the negation requirement for a truth predicate. Similarly, since any condition $p$ forces $\bigwedge_i\varphi_i$ just in case it forces each $\varphi_i$ separately, it follows that $\one$ forces that $\dot\T$ obeys the conjunction rule. And since $p\forces\forall\vec x\, \varphi(\vec x)$ just in case $p\forces\varphi(\vec\tau)$ for all particular $\vec\tau$, it follows that $\one$ forces that $\dot\T$ obeys the quantifier requirement.

\newcommand\varforces{\Vvdash}
$\rm(ii\to i)$ Conversely, suppose that $\P$ admits a class $\P$-name $\dot\T$ and a forcing relation $\forces$ for which
  $$\one\forces\dot\T\text{ is an }\mathcal{L}_{\omega,\omega}(\in,\dot\Gamma_0,\ldots,\dot\Gamma_m)\text{-truth predicate.}$$
We take it as part of this assertion, following definition~\ref{Definition.Forcing-relation-scheme}, that the forcing relation $\forces$ is defined on all atomic formulas. For assertions $\varphi$ in the forcing language, let us define a new relation $\varforces$ as follows:
 $$p\varforces\varphi(\vec\tau)\quad\longleftrightarrow\quad p\forces \op(\check\varphi,\vec\tau)\in\dot\T.$$
That is, we say that an assertion $\varphi(\tau)$ is forced via $\varforces$ by a condition $p$, if $p$ forces that it is true according to the truth predicate $\dot\T$. On the right hand side of this definition, we use the extension of the atomic forcing relation $\forces$ as in theorem~\ref{Theorem.Atomic-implies-scheme} to any particular first-order expression in the forcing language.

First, let's notice that the two forcing relations agree on atomic formulas. Specifically, because of what it means to be a truth predicate, this relation gets the right answer for atomic formulas, since we have that $p\varforces\sigma=\tau$ if and only if $p\forces\op\bigl((x=y)^\echeck,\op(\sigma,\tau)\bigr)\in\dot\T$, which holds if and only if $p\forces\sigma=\tau$; and similarly, $p\varforces\sigma\in\tau$ if and only if $p\forces\op\bigl((x\in y)^\echeck,\op(\sigma,\tau)\bigr)\in\dot\T$, which holds if and only if $p\forces\sigma\in\tau$. So $\varforces$ and $\forces$ agree on atomic assertions (and therefore we didn't really need a new forcing symbol).

We have to verify that this relation that we have just defined satisfies the required recursive properties to be a forcing relation. We have already observed that it agrees with the forcing relation for atomic assertions. Since $\one$ forces that $\dot\T$ is a truth predicate, it follows that for any particular assertion $\varphi(\vec\tau)$, there are densely many conditions $q$ either forcing that $\varphi(\vec\tau)$ is true according to $\dot\T$ or that $\neg\varphi(\vec\tau)$ is true according to $\dot\T$, but never both. From this, it follows that $p\varforces\neg\varphi(\vec\tau)$ just in case no stronger condition forces $\varphi(\vec\tau)$. Similarly, the truth-predicate requirements on conjunctions lead to the fact that $p\varforces\varphi\wedge\psi$ if and only if $p\varforces\varphi$ and $p\varforces\psi$. And $p\varforces\forall x\,\varphi(x)$ just in case $p\varforces\varphi(\tau)$ for all $\P$-names $\tau$. So we've got a uniform forcing relation, as desired.
\end{proof}

Theorem~\ref{Theorem.Forcing-relation-as-name-for-truth-predicate} establishes the equivalence $\ref{Main.Uniform-first-order}\iff\ref{Main.Names-for-truth-predicate}$ in the main theorem on a case-by-case basis for any class forcing notion $\P$. A similar argument shows that to have a forcing relation $\forces$ defined on a fragment of the forcing language, such as the collection of subformulas of a given formula or the formulas of a certain complexity, then $\one$ forces that the corresponding predicate $\dot\T$ is a truth predicate on the same language fragment. In the case of the infinitary languages introduced in section~\ref{Section.Infinitary-languages}, an analogue of theorem~\ref{Theorem.Forcing-relation-as-name-for-truth-predicate} provides a name for a truth predicate for the language consisting of all ground-model assertions in that infinitary language.

\section{The infinitary languages}\label{Section.Infinitary-languages}

Let us now explain how to extend the forcing relation concept to the case of various infinitary languages. These languages are unified as instances of the following general definition.

\begin{definition}\label{Definition.Infinitary languages}\rm Assume $\kappa,\lambda\leq\Ord$ are infinite cardinals or $\Ord$ itself.
  \begin{enumerate}[(a)]
    \item The language $\mathcal{L}_{\kappa,\lambda}(\in,\hat A)$ has a signature consisting of the binary relation $\in$, a unary predicate symbol $\hat A$ and sufficiently many (e.g. $\lambda^{<\kappa}$ many) variable symbols $x_i$. The formulas of the language are obtained from the atomic formulas $x=y$, $x\in y$ and $x\in\hat A$ by closing under negation $\neg\varphi$, under conjunctions $\bigwedge_{j\in J}\varphi_j$ of size $|J|<\kappa$, provided that there are fewer than $\max(\omega,\lambda)$ many free variables collectively in the $\varphi_j$, and under quantification $\forall\vec x\, \varphi(\vec x)$ by quantifier blocks $\vec x=\<x_i\mid i\in I>$ of size $|I|<\lambda$.
    \item The forcing language $\mathcal{L}_{\kappa,\lambda}(\in,V^\P,\dot\Gamma_0,\ldots,\dot\Gamma_m)$, for any class forcing notion $\P$, augments the previous language with all the various $\P$-names as constant symbols and predicate symbols $\dot\Gamma_i$ for finitely many class names, closing under conjunctions of size less than $\kappa$ and quantifier blocks of size less than $\lambda$.
 \end{enumerate}
\end{definition}

This general definition has natural special cases, with which we shall be concerned:
\begin{itemize}
  \item The usual first-order language of set theory $\mathcal{L}_{\omega,\omega}(\in)$;
  \item the usual first-order forcing language $\mathcal{L}_{\omega,\omega}(\in,V^\P,\dot G)$;
  \item the quantifier-free infinitary forcing language $\mathcal{L}_{\Ord,0}(\in,V^\P,\dot G)$;
  \item the partial infinitary language $\mathcal{L}_{\Ord,\omega}(\in,\hat A)$;
  \item the full infinitary language $\mathcal{L}_{\Ord,\Ord}(\in,\hat A)$.
  \item the full infinitary forcing language $\mathcal{L}_{\Ord,\Ord}(\in,V^\P,\dot\Gamma_0,\ldots,\dot\Gamma_m)$.
\end{itemize}

Every formula in each of these languages is determined by its well-founded parse tree, which specifies at each node how the formula was constructed from its various subformulas, and one may undertake inductive proofs on formulas by means of these parse trees. Indeed, let us simply identify every formula $\varphi$ with its parse tree.

Extending definitions~\ref{Definition.Forcing-relation-atomic} and~\ref{Definition.Forcing-relation-scheme}, we now define what it means to have uniform forcing relations for these languages.

\begin{definition}\label{Definition.Forcing-relation-infinitary}\rm
If $\P$ is a class forcing notion and $\mathcal{L}(\in,V^\P,\dot\Gamma_0,\ldots,\dot\Gamma_m)$ is one of the forcing languages mentioned above, then we say that $\P$ admits a uniform forcing relation for this language, if there is a relation $\forces$ obeying the following recursive properties:
\begin{enumerate}[(a)]
  \item The forcing relation $\forces$ is defined on all atomic formulas $\sigma=\tau$, $\sigma\in \tau$, and  $\sigma\in\dot\Gamma_i$ and fulfills the requirements expressed by definitions~\ref{Definition.Forcing-relation-atomic} and~\ref{Definition.Forcing-relation-scheme} for these atomic formulas;
  \item for conjunctions in the language, $p\forces \bigwedge_{j\in J}\varphi_j$ if and only if $p\forces\varphi_j$ for each $j$;
  \item $p\forces \neg\varphi$ if and only if there is no $q\leq p$ with $q\forces \varphi$;
  \item if the language allows quantification, then $p\forces\forall x\, \varphi(x)$ if and only if $p\forces\varphi(\tau)$ for every $\P$-name $\tau$; and
  \item if the language allows infinitary blocks of quantifiers, then $p\forces \forall \vec x\, \varphi(\vec x)$, where $\vec x=\<x_i\mid i\in I>$, if and only if $p\forces\varphi(\vec\tau)$ for all sequences of $\P$-names $\vec\tau=\<\tau_i\mid i\in I>$.
\end{enumerate}
\end{definition}

If we regard disjunction $\bigvee_j\varphi_j$ as an abbreviation for $\neg\bigwedge_j\neg\varphi_j$, then the definition requires that $p\forces\bigvee_j\varphi_j$ if and only if there are densely many $q\leq p$ for each of which there is some $j$ such that $q\forces \varphi_j$, allowing different $j$ for different $q$. Similarly, if we regard $\exists x\,\varphi(x)$ as an abbreviation for $\neg\forall x\,\neg\varphi(x)$, then the definition requires that $p\forces\exists x\,\varphi(x)$ if and only if there are densely many $q\leq p$ for each of which there is some $\tau$ such that $q\forces\varphi(\tau)$.

It turns out that every class forcing relation for atomic formulas can be extended to a uniform forcing relation on the quantifier-free infinitary forcing language. Indeed, the proof of theorem~\ref{Theorem.Atomic-to-quantifier-free-forcing-relation} shows that the expressive power of the forcing language \hbox{$\mathcal{L}_{\Ord,0}(\in,V^\P,\dot G)$} does not actually exceed the expressive power of the atomic equality assertions $\dot a=\dot b$, and this is why having a forcing relation for merely the atomic assertions suffices to provide a forcing relation for $\mathcal{L}_{\Ord,0}(\in,V^\P,\dot G)$. This theorem will be used in section~\ref{Section.Boolean-completions}

\begin{theorem}\label{Theorem.Atomic-to-quantifier-free-forcing-relation}
 If a class forcing notion $\P$ admits a forcing relation for atomic formulas, then it admits a uniform forcing relation in the quantifier-free infinitary forcing language $\mathcal{L}_{\Ord,0}(\in,V^\P,\dot G)$.
\end{theorem}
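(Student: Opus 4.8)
The plan is to make good on the slogan that the language $\mathcal{L}_{\Ord,0}(\in,V^\P,\dot G)$ has no more expressive power than the atomic assertions already decided by the given relation. Concretely, I will attach to each quantifier-free infinitary formula $\varphi$ a single $\P$-name $\dot a_\varphi$ and then simply \emph{define} $p\forces\varphi$ to mean $p\forces\check\emptyset\in\dot a_\varphi$, an atomic assertion for which the hypothesized relation is available (and which, if one insists on equality as in the slogan, further reduces to $\dot a_\varphi=\check 1$). The basic gadget is this: if $D\of\P$ is downward closed and moreover has the property that being dense below $p$ to lie in $D$ already forces $p\in D$ --- exactly the two closure properties enjoyed by any class $\set{p\mid p\forces\psi}$ by parts (1) and (2) of lemma~\ref{Lemma.Forcing-relations-respects-logical-consequence} --- then the name $\dot c_D=\set{\<\check\emptyset,p>\mid p\in D}$ satisfies $p\forces\check\emptyset\in\dot c_D$ if and only if $p\in D$. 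This follows by unwinding clause (a) of definition~\ref{Definition.Forcing-relation-atomic}: every pair in $\dot c_D$ has first coordinate $\check\emptyset$, which every condition forces equal to itself, so the relevant density reduces, using downward closure, to the statement that it is dense below $p$ to lie in $D$, which by the second closure property is equivalent to $p\in D$.

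With this gadget I handle the three ways an $\mathcal{L}_{\Ord,0}$-formula is built; there are no quantifier clauses to check, since $\lambda=0$. For an atomic $\varphi$ I first extend the given relation to $\sigma\in\dot G$ by reading clause (b) of definition~\ref{Definition.Forcing-relation-scheme}; since $\dot G=\set{\<\check r,r>\mid r\in\P}$ this is phrased purely in the atomic equality relation. I then set $\dot a_\varphi=\dot c_{D_\varphi}$ for $D_\varphi=\set{p\mid p\forces\varphi}$, so that $p\forces\check\emptyset\in\dot a_\varphi$ recovers the given atomic relation. For a conjunction $\bigwedge_{j\in J}\varphi_j$ I take $\dot a_\varphi=\dot c_E$ with $E=\set{p\mid p\forces\check\emptyset\in\dot a_{\varphi_j}\text{ for all }j}$; and for a negation I take $\dot a_{\neg\varphi}=\dot c_N$ with $N=\set{p\mid\text{no }q\leq p\text{ has }q\forces\check\emptyset\in\dot a_\varphi}$. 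The point is that the defining class in each case --- $E$ as an intersection, $N$ as the complement of a downward closure --- again inherits downward closure and dense-below closure from lemma~\ref{Lemma.Forcing-relations-respects-logical-consequence}, so the gadget applies to it.

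Verifying that the resulting relation $p\forces\varphi:\Leftrightarrow p\forces\check\emptyset\in\dot a_\varphi$ meets the recursive requirements of definition~\ref{Definition.Forcing-relation-infinitary} is then immediate: clause (b) holds because $p\forces\check\emptyset\in\dot c_E$ iff $p\in E$ iff $p$ forces each $\varphi_j$; and clause (c) holds because $p\forces\check\emptyset\in\dot c_N$ iff $p\in N$ iff no stronger condition forces $\varphi$. The three underlying density arguments are precisely the ones establishing the gadget, now read off at the level of formulas.

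The main obstacle is not any individual clause but the requirement that the assignment $\varphi\mapsto\dot a_\varphi$, and hence $\forces$, be a \emph{single} uniform class covering all formulas simultaneously, rather than a formula-by-formula scheme as in theorem~\ref{Theorem.Atomic-implies-scheme}. Each clause produces $\dot a_\varphi$ from the names of the immediate subformulas by one fixed elementary operation, and the crucial simplification is that the Boolean complexity never accumulates: at every node I only query the \emph{atomic} assertion $\check\emptyset\in\dot a_{\varphi_j}$ against the given relation rather than re-decoding the subformula, so that the given atomic relation absorbs all of the recursion. The assignment is thus an elementary recursion along the well-founded parse tree of the formula, of rank at most $\Ord$ for $\mathcal{L}_{\Ord,0}$. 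Carrying out this recursion uniformly --- producing the single class coding $\varphi\mapsto\dot a_\varphi$, rather than one name at a time in the meta-theory --- is the heart of the theorem and the step I expect to require the most care; it is exactly here that possession of a genuine class forcing relation for $\P$, as opposed to a mere scheme, does the work, in harmony with the equivalence of the atomic class forcing theorem with $\ETRord$ recorded in the main theorem.
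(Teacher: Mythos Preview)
Your construction has a genuine gap at exactly the point you flag in your final paragraph. The names $\dot a_\varphi=\dot c_{D_\varphi}$ you build are \emph{proper class} names whenever $\P$ is a proper class, since $D_\varphi\of\P$ is typically a proper class of conditions. The recursion $\varphi\mapsto\dot a_\varphi$ therefore is not a set-like recursion: at each stage you must consult the atomic forcing relation against the class-name outputs of earlier stages, and the total assignment is a class recursion of length $\Ord$ along parse trees. Carrying that out uniformly is precisely an instance of $\ETRord$, which you do not have. Your appeal to ``the equivalence of the atomic class forcing theorem with $\ETRord$'' is circular in two ways: first, this theorem is one of the implications used to establish that equivalence; second, the hypothesis here is only that \emph{this particular} $\P$ admits an atomic forcing relation, not that every class forcing notion does, and a single such class does not hand you the ability to solve arbitrary $\Ord$-length class recursions.

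The paper's proof avoids this by arranging that the translation $\varphi\mapsto(\dot a_\varphi,\dot b_\varphi)$ produces \emph{set} names at every stage, with each stage computed from the previous by an ordinary set operation that does not invoke the forcing relation at all. For instance, a conjunction $\bigwedge_i\varphi_i$ is sent to the equality between the set names $\set{\<\op(\check i,\dot a_i),\one>\mid i\in I}$ and $\set{\<\op(\check i,\dot b_i),\one>\mid i\in I}$, and disjunctions are handled by a more elaborate but still purely set-theoretic gadget. Negations are eliminated entirely by pushing them to the atomic level via de Morgan and then unwinding the atomic negations by explicit $\in$-recursions on names. Because the recursion is set-like in this sense, it goes through in \GBC\ with no \ETR-type assumption, and only \emph{after} the translation is complete does one invoke the atomic forcing relation, once, to define $p\forces\varphi$ as $p\forces\dot a_\varphi=\dot b_\varphi$. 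That is the idea your approach is missing.
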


\begin{proof}
This argument adapts the main ideas of~\cite[lemma~5.2, theorem~5.5]{HolyKrapfLuckeNjegomirSchlicht2016:Class-forcing}, generalizing from the context there of a countable transitive model of set theory $M$ to our development here of class forcing as an internal \GBC\ matter. The difference is that in the countable-transitive-model case, one is able to define a forcing relation externally by reference to what is true in the various extensions $M[G]$, and then use that relation in inductive arguments; but here, we must define a suitable class relation internally to \GBC\ and then prove that it fulfills the forcing-relation recursion, even in uncountable or nonstandard models and with nonstandard formulas, whose truth conditions are not necessarily sensible in the meta-theory.

To begin, we assume that $\P$ is a class forcing notion for which we have a forcing relation $\forces$ for the atomic formulas. We aim to define a forcing relation $p\forces \varphi$ for the sentences $\varphi$ in the quantifier-free infinitary forcing language $\mathcal{L}_{\Ord,0}(\in,V^\P,\dot G)$. In order to do so, we shall recursively assign to each sentence $\varphi$ in the quantifier-free infinitary forcing language $\mathcal{L}_{\Ord,0}(\in,V^\P,\dot G)$ an atomic formula of the form $\dot a_\varphi=\dot b_\varphi$ and then define the desired forcing relation $\forces$ as follows:
  $$p\forces\varphi\qquad\longleftrightarrow\qquad p\forces\dot a_\varphi=\dot b_\varphi\,.\qquad\qquad(\ast)$$
Basically, we aim to find suitable atomic equality assertions $\dot a_\varphi=\dot b_\varphi$ that track the truth and forcing conditions for any given quantifier-free infinitary assertion $\varphi$, and we shall then prove that the relation $\forces$ defined by $(\ast)$ is indeed a forcing relation.

The assignment $\varphi\mapsto\dot a_\varphi=\dot b_\varphi$ will be the end result of a certain transfinite syntactic translation process, which gradually reduces a given sentence $\varphi$ in $\mathcal{L}_{\Ord,0}(\in,V^\P,\dot G)$ to sentences appearing earlier in the (well-founded) translation hierarchy that is implicit in our construction, eventually reaching the atomic equalities $\dot a_\varphi=\dot b_\varphi$ as irreducible terminal nodes. At each step, the translation process will respect the forcing-relation requirements of definition~\ref{Definition.Forcing-relation-infinitary}.

For the first step of the translation, we systematically eliminate use of the $\dot G$ predicate, except for the check names, by applying the following transformation whenever $\sigma$ is not itself a check name:
\begin{align*}
  \sigma\in\dot G       &\qquad\mapsto\qquad
         \bigvee_{\quad p\in\P\intersect V_{\rank(\sigma)+1}}\hskip-5mm\bigl(\,\check p\in\dot G\wedge \sigma=\check p\,\bigr)
\end{align*}
Since it is not difficult to see by induction on names that $\one\forces \sigma\in\check V_{\rank(\sigma)+1}$ for any name $\sigma$, the idea of this transformation is that the only way $\sigma$ can name a condition in $\dot G$ is if it is naming one of the conditions $p\in\P\intersect V_{\rank(\sigma)+1}$. It follows that $\one$ forces the equivalence of the assertion $\sigma\in\dot G$ with its translation at the right. The result of the transformation is a formula for which the only occurrences of the $\dot G$ predicate are with check names of the form $\check p\in\dot G$.

Next, we systematically apply the infinitary de Morgan laws to push all remaining negations to the bottom of the parse tree, so that they appear, if at all, immediately in front of atomic formulas.
\begin{align*}
  \neg\bigvee_{i\in I}\varphi_i &\qquad\mapsto\qquad \bigwedge_{i\in I}\neg\varphi_i\\
  \neg\bigwedge_{i\in I}\varphi_i &\qquad\mapsto\qquad \bigvee_{i\in I}\neg\varphi_i
\end{align*}
These transformations are logical validities and therefore, if the forcing relation is defined on the formulas at the right, then it can be legitimately extended to the formulas at the left.

After this, we eliminate most of the remaining negations by systematically applying the following reductions.
\begin{align*}
  \sigma\neq\tau    &\qquad\mapsto\qquad \sigma\not\of\tau\vee \tau\not\of\sigma\\
  \sigma\not\of\tau & \qquad\mapsto\qquad \bigvee_{\<\rho,r>\in\sigma}\bigl(\check r\in\dot G\wedge \rho\notin\tau\bigr)\\
  \sigma\notin\tau &\qquad\mapsto\qquad
         \bigwedge_{\<\rho,r>\in\tau}\bigl(\check r\notin\dot G\vee\sigma\neq\rho\bigr)
\end{align*}
What we intend here is that the reductions are applied iteratively, until they can no longer be applied; an easy inductive argument on names shows that the reduction process eventually terminates. It is easy to see in each case that $\one$ forces the equivalence of each of these negated atomic formulas with its translation.

The translation process mentioned so far reduces any given sentence to a positive infinitary Boolean combination (using iterated infinitary conjunction and disjunction) of formulas of the form $\sigma=\tau$, $\sigma\in\tau$, $\check p\in\dot G$ and $\check p\notin\dot G$. In order to eliminate all but the atomic equalities, we now apply the following reductions
\begin{align*}
     \sigma\in\tau         &\qquad\mapsto\qquad \tau =\tau\union\set{\<\sigma,\one>}\\
     \check q\in\dot G     &\qquad\mapsto\qquad \set{\<\emptyset,q>} = \set{\<\emptyset,\one>} \\
     \check q\notin\dot G  &\qquad\mapsto\qquad \set{\<\emptyset,q>} = \emptyset,
\end{align*}
whose equivalences are in each case forced by $\one$.

Thus, we have transformed every sentence of the quantifier-free infinitary forcing language $\mathcal{L}_{\Ord,0}(\in,V^\P,\dot G)$ to a positive Boolean combination of atomic equalities $\sigma=\tau$. We shall now systematically apply further reductions to eliminate the need for conjunctions and disjunctions, and thereby reduce every infinitary sentence $\varphi$ to a single atomic equality $\dot a_\varphi=\dot b_\varphi$.

If $\varphi_i$ has already been mapped to the atomic equality $\dot a_i=\dot b_i$, then we eliminate the infinitary conjunction by the transformation
$$\bigwedge_{i\in I}\varphi_i\quad\mapsto\quad \set{\<\operatorname{op}(\check i,\dot a_i),\one>\mid i\in I}
   = \set{\langle\operatorname{op}(\check i,\dot b_i),\one\rangle\mid i\in I}.$$
It is easy to see that a condition $p$ forces the atomic equality $\dot a=\dot b$ at the right if and only if $p\forces\dot a_i=\dot b_i$ for all $i\in I$.

Infinitary disjunctions are a little more troublesome, but we can eliminate them by means of the following transformation.
$$\bigvee_{i\in I}\varphi_i\qquad\mapsto\qquad\dot a=\dot b,$$
where
\begin{align*}
     \dot u  \qquad&=\qquad\set{\langle\op(\check i,\dot a_i),\one\rangle\mid i\in I}\union\set{\langle\op(\check i,\dot b_i),\one\rangle\mid i\in I}\\
     \dot u_j\hskip7mm&=\qquad\set{\langle\op(\check i,\dot a_i),\one\rangle\mid i\in I}\union\set{\langle\op(\check i,\dot b_i),\one\rangle\mid i\in I,i\neq j}\\
     \dot a      \qquad&=\qquad\set{\<u_j,\one>\mid j\in I},\\
     \dot b      \qquad&=\qquad\set{\<u_j,\one>\mid j\in I}\union\set{\<\dot u,\one>}.
\end{align*}
The idea is that the names $\dot u_j$ each remove what might be a redundancy from $\dot u$, and so the names $\dot a$ and $\dot b$ will name the same set just in case there is such a redundancy. Note that if $p\forces \dot a_j=\dot b_j$ for some particular $j$, then $p\forces \dot u_j=\dot u$, because of the corresponding redundancy in $\dot u$, and consequently $p\forces \dot a=\dot b$. Conversely, if $p\forces \dot a=\dot b$, then $p\forces\dot u\in \dot a$, and so there are densely many $q\leq p$ for which there is some $j$ such that $q\forces \dot u=\dot u_j$ and consequently $q\forces\dot a_j=\dot b_j$. In short, $p\forces \dot a=\dot b$ if and only if there are densely many $q\leq p$ with $q\forces \dot a_i=\dot b_i$ for some particular $i$. Thus, our transformation respects the desired forcing relation requirement for disjunctions.

We now complete the proof of the theorem. We have described a translation of any sentence $\varphi$ in the quantifier-free infinitary forcing language $\mathcal{L}_{\Ord,0}(\in,V^\P,\dot G)$ to a corresponding atomic equality $\dot a_\varphi=\dot b_\varphi$. This translation process was an ordinary set-like recursion on formulas, which can be undertaken in \GBC\ without the need for any \ETR-like assumption. We defined the forcing relation by $(\ast)$ above, namely, $p\forces\varphi$ if and only if $p\forces \dot a_\varphi=\dot b_\varphi$. This relation $\forces$, we claim, obeys the recursive requirements of definition~\ref{Definition.Forcing-relation-infinitary}. This is proved by induction on the translation order. If $\forces$ is a forcing relation on all formulas appearing before $\varphi$ in the translation process, then because as we have observed, each step of the translation process respects the requirements of the forcing relation, it follows that $\forces$ is also a forcing relation on the sentence $\varphi$. So we have defined a uniform forcing relation on \hbox{$\mathcal{L}_{\Ord,0}(\in,V^\P,\dot G)$}, as desired.
\end{proof}

%

The statement in the conclusion of theorem~\ref{Theorem.Atomic-to-quantifier-free-forcing-relation}, if stated for all class forcing notions---that is, the assertion that every class forcing notion $\P$ admits a uniform forcing relation for the quantifier-free infinitary forcing language---is equivalent to all the other statements made in the main theorem, because the previous theorem shows that it is implied on a case-by-case basis by statement~\ref{Main.atomic} and conversely it clearly also implies statement~\ref{Main.atomic}. So we could actually have added this statement to the main theorem as yet another equivalent assertion.

The method of the previous theorem extends to the case of limited-complexity infinitary assertions, which allow finitely many quantifiers at the top level of the parse tree. That is, if one has a forcing relation for atomic formulas, then one can have a forcing relation for any formula having finitely many quantifiers at the front, alternating in any desired pattern, followed by a quantifier-free infinitary assertion. One first gets the forcing relation for the quantifier-free infinitary language as above, and then applies induction in the meta-theory as in theorem~\ref{Theorem.Atomic-implies-scheme}.

\hypertarget{ETRord-Uniform-infinitary}{}
Let us now prove the implication $\ref{Main.ETRord}\to\ref{Main.Uniform-infinitary}$. Meanwhile, the implications $\ref{Main.Uniform-infinitary}\to\ref{Main.Uniform-first-order}\to\ref{Main.scheme}\to\ref{Main.atomic}$ are essentially immediate.

\begin{theorem}\label{Theorem.ETRord-implies-uniform-forcing-relation}
  The principle $\ETRord$ of elementary transfinite recursion for class recursions of length $\Ord$ implies that every class forcing notion $\P$ admits a uniform forcing relation for assertions in the forcing language $\mathcal{L}_{\Ord,\Ord}(\in,V^\P,\dot\Gamma_0,\ldots,\dot\Gamma_m)$, allowing any fixed class names $\dot\Gamma_i$.
\end{theorem}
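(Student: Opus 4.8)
The plan is to obtain the uniform forcing relation through two successive applications of $\ETRord$: first to secure the atomic forcing relation, and then to extend it over the full logical structure of the infinitary language by a recursion on formula complexity. The crucial point that keeps both recursions within reach of $\ETRord$ is that the two sources of complexity here---the rank of the $\P$-names and the build-up complexity of the infinitary formulas---can be handled \emph{separately}, each as a recursion of length $\Ord$, rather than together, which would produce a recursion of length $\Ord\cdot\Ord$ or more and hence demand a stronger principle.

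First I would invoke theorem~\ref{Theorem.ETRord-implies-forcing-relation-atomic}, the implication $\ref{Main.ETRord}\to\ref{Main.atomic}$, to obtain from $\ETRord$ an atomic forcing relation for $\P$. Let $A$ be the class coding this relation, together with the derived relation $p\forces\sigma\in\dot\Gamma_i$ for the finitely many class names, which is definable from the atomic equality relation by clause (b) of definition~\ref{Definition.Forcing-relation-scheme}. This class $A$ will serve as a fixed class parameter for the second recursion, so that the name-rank recursion is carried out once and thereafter treated as an oracle. Next I would assign to each formula $\varphi$ of $\mathcal{L}_{\Ord,\Ord}(\in,V^\P,\dot\Gamma_0,\ldots,\dot\Gamma_m)$ its \emph{formula-complexity rank}, namely the ordinal rank of its parse tree, counting each atomic formula $\sigma=\tau$, $\sigma\in\tau$, $\sigma\in\dot\Gamma_i$ as rank $0$ and taking supremum-plus-one at negations, infinitary conjunctions, and quantifier blocks. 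The essential observation is that this rank depends only on the logical shape of the formula and not on the $\P$-names occurring at its leaves; in particular, a substitution instance $\varphi(\vec\tau)$ has the same formula-complexity rank as $\varphi(\vec x)$, so the quantifier clauses of definition~\ref{Definition.Forcing-relation-infinitary} refer only to formulas of strictly smaller rank. Since every formula is a set and hence has a set-sized rank, while formulas of arbitrarily large rank exist, the formula-complexity order is a well-founded class relation of rank exactly $\Ord$.

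With these preparations, the forcing relation is defined by the length-$\Ord$ recursion whose stage-$\beta$ slice $S_\beta$ collects the pairs $\langle p,\varphi\rangle$ with $\varphi$ of formula-complexity rank $\beta$ for which the appropriate clause of definition~\ref{Definition.Forcing-relation-infinitary} holds relative to $S\restrict\beta$ and the parameter $A$: the atomic clauses read off from $A$ at rank $0$, and the conjunction, negation, and (infinitary) quantifier clauses refer to the already-defined forcing of the strictly lower-rank subformulas and their name-substitution instances. By definition~\ref{Definition.ETRord}, or equivalently by the formulation of $\ETRord$ as recursion along well-founded class relations of rank $\Ord$ noted after that definition, such a solution $S$ exists. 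Because the recursion was arranged precisely so that each slice satisfies clauses (a)--(e) of definition~\ref{Definition.Forcing-relation-infinitary}, the relation $\forces$ coded by $S$ is by construction a uniform forcing relation for $\mathcal{L}_{\Ord,\Ord}(\in,V^\P,\dot\Gamma_0,\ldots,\dot\Gamma_m)$, establishing the implication $\ref{Main.ETRord}\to\ref{Main.Uniform-infinitary}$.

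The main obstacle---and the genuine content of the argument---is the bookkeeping that confines each recursion to length $\Ord$. One must resist folding the name recursion and the formula recursion into a single well-founded order, since the natural product order has rank well beyond $\Ord$; it is exactly the two-stage strategy, with the atomic relation passed in as a class parameter, that avoids appealing to a stronger transfinite recursion principle such as $\ETR_{\Ord\cdot\omega}$. The supporting verifications---that the formula-complexity rank is truly invariant under substitution of names, and that each connective clause is first-order expressible relative to $S\restrict\beta$ and $A$---are routine, but they are precisely what licenses the single application of $\ETRord$ in the second stage.
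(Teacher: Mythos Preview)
Your proposal is correct and follows essentially the same approach as the paper: first invoke theorem~\ref{Theorem.ETRord-implies-forcing-relation-atomic} to obtain the atomic forcing relation, then treat that relation as a class parameter and run a second $\ETRord$ recursion on the ordinal rank of the parse tree of formulas, using that substitution of names does not change this rank so the quantifier clause refers only to strictly lower-rank instances. The paper's proof is terser, but its post-proof commentary makes exactly your point that the quantifier case is what forces a genuine class recursion of length $\Ord$ rather than an ordinary set-like recursion.
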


\begin{proof}
For any class forcing notion $\P$, we get the forcing relation for atomic formulas by theorem~\ref{Theorem.ETRord-implies-forcing-relation-atomic}. And now the point is that the requirements stipulated for the rest of the forcing relation by definition~\ref{Definition.Forcing-relation-infinitary} amount exactly to a recursion on formulas. That is, to have a forcing relation is to have the solution of a certain recursion, the recursion expressed by definition~\ref{Definition.Forcing-relation-infinitary}. Since every formula has a well-founded parse tree, which has some ordinal rank, we may organize this recursive definition of the uniform forcing relation as a class recursion of length $\Ord$, by recursing on the rank of the parse tree of the formula. Thus, $\ETRord$ provides a solution of this recursion, which is the desired uniform infinitary forcing relation.
\end{proof}

We should like to call attention to the contrast between theorems~\ref{Theorem.Atomic-to-quantifier-free-forcing-relation} and~\ref{Theorem.ETRord-implies-uniform-forcing-relation}. In the case of theorem~\ref{Theorem.Atomic-to-quantifier-free-forcing-relation}, we constructed a uniform forcing relation for quantifier-free infinitary assertions in the forcing language $\mathcal{L}_{\Ord,0}(\in,V^\P,\dot\Gamma_0,\ldots,\dot\Gamma_m)$ on a case-by-case basis for the forcing $\P$, without any need for $\ETRord$. This was an ordinary set-like recursion on formulas. In theorem~\ref{Theorem.ETRord-implies-uniform-forcing-relation}, however, the recursion is no longer set-like, because for the quantifier case, the relation $p\forces \forall x\, \varphi(x)$ reduces to a proper class of smaller instances $p\forces\varphi(\tau)$, even when $\varphi$ is merely first-order. Thus, this is no longer an ordinary recursion on sets, but a class recursion of length $\Ord$, which can be undertaken using $\ETRord$.

In the \ZFC\ context, set theorists have grown accustomed to having separate forcing relations for each formula $\varphi$, since it is not possible ever to have a uniform forcing relation as a definable class, as this would lead quickly to a definable truth predicate, contrary to Tarski's theorem. Nevertheless, the main theorem shows that if every class forcing notion $\P$ has its atomic forcing relations, then $\ETRord$ holds, and therefore every class forcing relation $\P$ has a fully uniform forcing relation, even in the full infinitary forcing language $\mathcal{L}_{\Ord,\Ord}(\in,V^\P,\dot\Gamma_0,\ldots,\dot\Gamma_m)$. And therefore also we get the accompanying truth predicates that these relations provide. This does not violate Tarski's theorem, because while the forcing relations exist as classes, they are not first-order definable classes.

For that reason, the `definability lemma' terminology in the literature, used to refer to the assertion that the class forcing relations exist, is somewhat misleading, because this terminology should not be interpreted as asserting that the forcing relations are actually (first-order) definable classes. Indeed, certain definable class forcing notions, such as those used in section~\ref{Section.Forcing-theorem-implies-truth-for-L_Ord,omega}, cannot have first-order definable forcing relations, even in the case of the forcing relations for atomic formulas only, although those relations can exist as \GBC\ classes. It is true, however, that when the forcing relations exist, then they are the unique class relations satisfying the forcing relation recursion, and therefore they are always first-order implicitly definable, in the sense of~\cite{HamkinsLeahy2016:AlgebraicityAndImplicitDefinabilityInSetTheory}, and therefore also second-order definable relations.

\section{Boolean completions}\label{Section.Boolean-completions}

In this section we prove the equivalence $\ref{Main.atomic}\leftrightarrow\ref{Main.Boolean-completion}$. This argument basically follows~\cite[theorem~5.5]{HolyKrapfLuckeNjegomirSchlicht2016:Class-forcing}, generalizing it from the context of countable transitive models of set theory to the general case of arbitrary models of \GBC, using our recursion conception of what it means in \GBC\ to have a forcing relation. The equivalence holds on a case-by-case basis for each class forcing notion $\P$ separately.

\begin{theorem}\label{Theorem.Atomic-iff-Boolean-completion}
 For any class forcing notion $\P$, the following are equivalent:
  \begin{enumerate}[\rm(i)]
    \item $\P$ admits a forcing relation for atomic formulas.
    \item $\P$ admits a uniform forcing relation for quantifier-free infinitary formulas in the forcing language $\mathcal{L}_{\Ord,0}(\in,\dot G)$.
    \item $\P$ admits a Boolean completion, a set-complete class Boolean algebra $\B$ into which $\P$ densely embeds.
  \end{enumerate}
\end{theorem}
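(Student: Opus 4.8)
The plan is to establish the cycle (i)\,$\to$\,(ii)\,$\to$\,(iii)\,$\to$\,(i), working internally in \GBC\ for the fixed forcing notion $\P$. The implication (i)\,$\to$\,(ii) is exactly theorem~\ref{Theorem.Atomic-to-quantifier-free-forcing-relation}, and the reverse (ii)\,$\to$\,(i) is immediate, since the atomic formulas $\sigma=\tau$ and $\sigma\in\tau$ already lie in $\mathcal{L}_{\Ord,0}(\in,\dot G)$, so a uniform forcing relation for that language restricts to a forcing relation for atomic formulas. Thus the two substantive directions are (ii)\,$\to$\,(iii), building a Boolean completion from the forcing relation, and (iii)\,$\to$\,(i), extracting the atomic forcing relation from a given completion.

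For (ii)\,$\to$\,(iii), I would build $\B$ as a Lindenbaum algebra of the quantifier-free infinitary forcing language. Using the uniform forcing relation $\forces$ supplied by (ii), declare two sentences $\varphi,\psi\in\mathcal{L}_{\Ord,0}(\in,\dot G)$ equivalent when $\one\forces(\varphi\leftrightarrow\psi)$, noting that $\varphi\leftrightarrow\psi$ is again a quantifier-free infinitary sentence. Since each such equivalence class is a proper class of sentences, I would apply Scott's trick, taking the set of minimal-rank representatives, so that the resulting collection $\B$ is a genuine class whose elements are sets. The Boolean operations are induced by $\wedge$, $\neg$ and infinitary $\bigvee$, and the order is $[\varphi]\leq[\psi]$ iff $\one\forces(\varphi\to\psi)$; that this is a partial order and a Boolean algebra follows from lemma~\ref{Lemma.Forcing-relations-respects-logical-consequence} in its infinitary form. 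The crucial point is set-completeness: for any \emph{set}-indexed family $\{[\varphi_i]\mid i\in I\}$ the disjunction $\bigvee_{i\in I}\varphi_i$ lies in the language, because $\mathcal{L}_{\Ord,0}$ is closed under set-sized connectives, and the density characterization of $p\forces\bigvee_i\varphi_i$ shows that $[\bigvee_i\varphi_i]$ is the least upper bound. Finally $p\mapsto[\check p\in\dot G]$ is the dense embedding of $\P$ into $\B$: it is order-preserving by separativity, and given any nonzero $[\varphi]$ there is some $p\forces\varphi$, whence $[\check p\in\dot G]\leq[\varphi]$.

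For (iii)\,$\to$\,(i), suppose $\B$ is a set-complete class Boolean completion with dense embedding $e\colon\P\to\B$. I would define the Boolean values by the usual Boolean-valued-model recursion,
\[
\boolval{\sigma\in\tau}=\bigvee_{\<\rho,r>\in\tau}\bigl(e(r)\wedge\boolval{\sigma=\rho}\bigr),\qquad
\boolval{\sigma\of\tau}=\bigwedge_{\<\rho,r>\in\sigma}\bigl(\neg e(r)\vee\boolval{\rho\in\tau}\bigr),
\]
with $\boolval{\sigma=\tau}=\boolval{\sigma\of\tau}\wedge\boolval{\tau\of\sigma}$. The key observation keeping this inside \GBC\ without any $\ETRord$ hypothesis is that for a \emph{fixed} pair of names the recursion is set-sized: all names appearing in it lie in the transitive closure of $\{\sigma,\tau\}$, which is a set, and every join or meet is indexed by the set $\tau$ or $\sigma$, so the required suprema and infima exist because $\B$ is set-complete. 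Thus for each $\<\sigma,\tau>$ an ordinary set recursion produces a unique set function computing these values, and the relation
\[
p\forces\sigma\in\tau\ \Longleftrightarrow\ e(p)\leq_\B\boolval{\sigma\in\tau},
\]
and similarly for $=$, is a class definable from $\B$ and $e$. That it satisfies the recursive clauses of definition~\ref{Definition.Forcing-relation-atomic} follows from the standard fact that, for a dense embedding into a complete Boolean algebra, $e(p)\leq\bigvee_i b_i$ exactly when densely many $q\leq p$ satisfy $e(q)\leq b_i$ for some $i$.

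The main obstacle is not the algebra but staying honestly within \GBC, where a class must consist of sets. This is what rules out the naive completion $\RO(\P)$, whose elements would be proper-class regular open subclasses, and it is the reason Scott's trick is needed in (ii)\,$\to$\,(iii) to keep $\B$ a class of sets. Dually, in (iii)\,$\to$\,(i) the temptation is to define all Boolean values by a single length-$\Ord$ recursion, which would require $\ETRord$; the essential trick is to localize the recursion to the transitive closure of each fixed pair of names so that it becomes set-sized, letting set-completeness of $\B$ supply exactly the joins one cannot otherwise form. Verifying that $\B$ is genuinely set-complete, rather than merely closed under the operations, is the technical heart of the forward direction, and checking the forcing-relation clauses against the Boolean-value recursion is the technical heart of the reverse.
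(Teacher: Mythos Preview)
Your proposal is correct and follows essentially the same route as the paper: (i)$\to$(ii) via theorem~\ref{Theorem.Atomic-to-quantifier-free-forcing-relation}, (ii)$\to$(iii) via the Lindenbaum algebra of $\mathcal{L}_{\Ord,0}(\in,\dot G)$ modulo forced equivalence with Scott's trick, and (iii)$\to$(i) via the standard Boolean-value recursion on names with $p\forces\varphi\iff e(p)\leq\boolval{\varphi}$. Your explicit justification that the Boolean-value recursion is set-local (confined to $\TC(\{\sigma,\tau\})$) and hence available in \GBC\ without $\ETRord$ is a point the paper leaves implicit, but otherwise the arguments coincide.
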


\begin{proof}
The implication $\rm(i\to ii)$ is provided by theorem~\ref{Theorem.Atomic-to-quantifier-free-forcing-relation}.

For $\rm(ii\to iii)$, assume that $\P$ has a uniform forcing relation for quantifier-free infinitary formulas in the forcing language $\mathcal{L}_{\Ord,0}(\in,\dot G)$. For $\varphi,\psi$ in this language, define $\varphi\approx\psi$ just in case $\one\forces\varphi\iff\psi$. This is a class equivalence relation, and the quotient $\B=\mathcal{L}_{\Ord,0}(\in,\dot G)/\approx$, inheriting the logical structure of the language itself, is easily seen to be a set-complete Boolean algebra, as in the Lindenbaum algebra, where $\neg[\varphi]_\approx=[\neg\varphi]_\approx$ and $\bigwedge_{i\in I}[\varphi_i]_\approx=[\bigwedge_{i\in I}\varphi_i]_\approx$. (We use Scott's trick to represent each equivalence class canonically by the set of its minimal-rank members, in order to avoid the inconvenience that an equivalence class would otherwise be a proper class.) Finally, $\P$ embeds densely into $\B$ by the map $p\mapsto [\check p\in\dot G]_\approx$.

Lastly, for $\rm(iii\to i)$, suppose that a class forcing notion $\P$ has a Boolean completion $\B$, a set-complete class Boolean algebra $\B$ with a dense embedding $i:\P\to\B$. We shall prove that there is a forcing relation for atomic formulas. To do so, we define the following Boolean values, by recursion on names:
 \begin{equation*}
   \begin{split}
      \boolval{\sigma\in\tau} &\quad=\quad \bigvee_{\<\rho,r>\in\tau} \bigl(\boolval{\sigma=\rho}\wedge i(r)\bigr) \\
      \boolval{\sigma=\tau}   &\quad=\quad  \boolval{\sigma\of\tau}\wedge\boolval{\tau\of\sigma}\\
      \boolval{\sigma\of\tau} &\quad=\quad  \bigwedge_{\<\rho,r>\in\sigma}\bigl(\neg i(r)\vee\boolval{\rho\in\tau}\bigr)
   \end{split}
 \end{equation*}
This is a straightforward recursion on names, which we may undertake in \GBC, without need for any $\ETR$ assumption. We now define the corresponding forcing relation $p\forces\varphi$ if and only if $i(p)\leq\boolval{\varphi}$, for atomic $\varphi$.

It remains to check that this relation is indeed a forcing relation for atomic truth, that is, that it fulfills the recursive requirements of definition~\ref{Definition.Forcing-relation-atomic}. This is an exercise in the usual Boolean-valued reasoning. For example, if $p\forces\sigma\in\tau$, then
  $$i(p)\leq\boolval{\sigma\in\tau}=\bigvee_{\<\rho,r>\in\tau}\bigl(\boolval{\sigma=\rho}\wedge i(r)\bigr),$$
and so every $p'\leq p$ has $i(p')$ compatible with $\boolval{\sigma=\rho}\wedge i(r)$ for some $\<\rho,r>\in\tau$, which means there is some $q\leq p'$ with $q\leq r$ and $i(q)\leq \boolval{\sigma=\rho}$. Thus, there is a dense class of $q\leq p$ with $q\forces \sigma=\rho$ for some $\<\rho,r>\in\tau$ with $q\leq r$, as desired. The other properties are similar and left for the reader.
\end{proof}

\section{$\ETRord$ implies truth predicate for $\mathcal{L}_{\Ord,\Ord}(\in,A)$}\label{Section.ETRord-implies-truth-for-L_Ord,Ord}

In this section, we shall prove the implication $\ref{Main.ETRord}\to\ref{Main.Truth-Ord-Ord}$. The implication $\ref{Main.Truth-Ord-Ord}\to\ref{Main.Truth-Ord-omega}$ is essentially immediate. We begin by providing the natural generalization of the truth predicate concept of definition~\ref{Definition.Truth-predicate-first-order} to the infinitary context as follows.

\begin{definition}\rm\label{Definition.Truth-predicate-infinitary}
If $\kappa,\lambda\leq\Ord$ are cardinals or $\Ord$ itself and $A$ is a fixed class parameter, then a \emph{truth predicate} for the $\mathcal{L}_{\kappa,\lambda}(\in,\hat A)$ language of set theory with a predicate for $A$ is a class $\T$ consisting of pairs $\<\varphi,\vec a>$, where $\varphi$ is a formula in that language and $\vec a$ is a valuation mapping the free variables of $\varphi$ to corresponding set parameters, such that the following recursion is satisfied:
\begin{enumerate}[(a)]
  \item $\T$ judges the truth of $\{=,\in,\hat A\}$-atomic statements correctly:
            $$\T(x=y,\<a,b>)\quad \text{ if and only if }\quad a=b$$
            $$\T(x\in y,\<a,b>)\quad \text{ if and only if } \quad a\in b$$
            $$\T(x\in\hat A,a)\quad \text{ if and only if }\quad a\in A$$
  \item $\T$ performs Boolean logic correctly:
            \begin{align*}
              \T\Bigl(\,\bigwedge_{i\in I}\varphi_i,\vec a\Bigr)\quad & \text{ if and only if }\quad \T(\varphi_i,\vec a)\text{ for all }i\in I\\
              \T(\neg\varphi,\vec a)\quad & \text{ if and only if }\quad \neg\T(\varphi,\vec a)
            \end{align*}
  \item $\T$ performs quantifier logic correctly:
            $$\T(\forall \vec x\, \varphi,\vec a)\text{ if and only if }\forall \vec b\ \T(\varphi,\vec b\concat\vec a),$$
            where $\vec b\concat\vec a$ is the valuation extending $\vec a$ by mapping the variables of $\vec x$ to the objects listed by $\vec b$.
\end{enumerate}
\end{definition}

This generalizes definition~\ref{Definition.Truth-predicate-first-order} from the case of the first-order language \hbox{$\mathcal{L}_{\omega,\omega}(\in,A)$} to the infinitary languages, such as $\mathcal{L}_{\Ord,\omega}(\in,A)$ and $\mathcal{L}_{\Ord,\Ord}(\in,A)$. Let us sublimate a few minor syntactical details, such as the fact that in (b) when referring to $\T(\varphi_i,\vec a)$, we should restrict the valuation $\vec a$ to the free variables of $\varphi_i$, if these were fewer than in $\bigwedge_i\varphi_i$.

\begin{theorem}\label{Theorem.ETRord-implies-Truth-Ord-Ord}
Assume the principle of elementary transfinite recursion $\ETRord$ for recursions of length $\Ord$. Then there is a truth predicate for $\mathcal{L}_{\Ord,\Ord}(\in,A)$, with any class parameter $A$.
\end{theorem}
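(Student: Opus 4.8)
The plan is to construct the truth predicate for $\mathcal{L}_{\Ord,\Ord}(\in,A)$ by organizing the Tarskian satisfaction recursion of definition~\ref{Definition.Truth-predicate-infinitary} as a single class recursion of length $\Ord$, and then invoking $\ETRord$ to obtain a solution. The key observation, exactly as in the atomic forcing relation argument of theorem~\ref{Theorem.ETRord-implies-forcing-relation-atomic} and the uniform forcing relation argument of theorem~\ref{Theorem.ETRord-implies-uniform-forcing-relation}, is that a truth predicate simply \emph{is} a solution to the recursion specified by the three clauses (a), (b), (c) of definition~\ref{Definition.Truth-predicate-infinitary}. So the entire task reduces to exhibiting these clauses in the form required by definition~\ref{Definition.ETRord}, namely as a first-order formula $\varphi(x, X, A)$ driving a recursion indexed by ordinals.

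First I would note that every formula $\varphi$ in $\mathcal{L}_{\Ord,\Ord}(\in,\hat A)$ is identified with its well-founded parse tree, which carries an ordinal rank; this is precisely the setup described just before definition~\ref{Definition.Forcing-relation-infinitary}. The recursion is then indexed by this parse-tree rank: the $\alpha^{\rm th}$ slice $S_\alpha$ of the solution collects the true pairs $\langle\varphi,\vec a\rangle$ for which $\varphi$ has rank $\alpha$. The defining formula places $\langle\varphi,\vec a\rangle$ into $S_\alpha$ according to the syntactic form of $\varphi$: atomic formulas are judged by the clauses in (a) (with the parameter $A$ supplying the interpretation of $\hat A$), a conjunction $\bigwedge_{i\in I}\varphi_i$ is declared true exactly when each conjunct pair $\langle\varphi_i,\vec a\rangle$ already lies in the earlier part $S\restrict\alpha$ of the solution, a negation $\neg\varphi$ is true when $\langle\varphi,\vec a\rangle$ is \emph{not} in $S\restrict\alpha$, and a quantified formula $\forall\vec x\,\varphi$ is true when $\langle\varphi,\vec b\concat\vec a\rangle\in S\restrict\alpha$ for every valuation $\vec b$ of the block $\vec x$. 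In each case the subformulas have strictly smaller parse-tree rank, so the predicate being defined at stage $\alpha$ refers only to $S\restrict\alpha$, exactly as the schema of definition~\ref{Definition.ETRord} permits. Thus the whole thing is a genuine instance of $\ETRord$, and $\ETRord$ delivers a solution class $S$; setting $\T = \bigcup_\alpha S_\alpha$ gives a class satisfying the recursion of definition~\ref{Definition.Truth-predicate-infinitary}.

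The step I expect to be the main obstacle is verifying that this really is a length-$\Ord$ recursion and not a recursion along some longer well-founded relation, since a priori the parse trees of $\mathcal{L}_{\Ord,\Ord}$ formulas could have arbitrarily large ordinal rank, and in a model of $\GBC$ there is no immediate bound keeping these ranks below $\Ord$. The resolution is that the parse-tree rank of any such formula is itself an ordinal, so the recursion is naturally stratified by $\Ord$-many slices; even though there are proper-class-many formulas of each rank, the recursion along the ordinals has length exactly $\Ord$, which is precisely what $\ETRord$ handles. A secondary point to check is that each slice $S_\alpha$ is genuinely a class (not merely a class of classes), which is fine because a pair $\langle\varphi,\vec a\rangle$ consists of a set formula-code together with a set valuation, and that the defining condition is uniformly first-order in the parameters $S\restrict\alpha$ and $A$; the quantifier clause, which ranges over all set valuations $\vec b$, is a first-order quantifier over sets and so stays within the $\Sigma$-formula format allowed by definition~\ref{Definition.ETRord}. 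Once these bookkeeping points are settled, uniqueness of the truth predicate (noted after definition~\ref{Definition.Truth-predicate-first-order}) confirms that the solution produced is the intended satisfaction class, completing the implication $\ref{Main.ETRord}\to\ref{Main.Truth-Ord-Ord}$.
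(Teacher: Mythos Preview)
Your proposal is correct and follows essentially the same approach as the paper: organize the Tarskian recursion by the ordinal rank of the parse trees and invoke $\ETRord$. The paper's proof is simply a terse two-sentence version of what you have written out in detail; your elaboration of the bookkeeping (that parse-tree ranks are ordinals, that each slice is first-order definable from $S\restrict\alpha$ and $A$) is exactly the content implicit in the paper's phrase ``defined by recursion on formulas.''
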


\begin{proof}
The point is that the existence of a such a truth predicate for this infinitary logic is an elementary transfinite recursion of length $\Ord$, defined by recursion on formulas. The formulas each have an ordinal rank coming from the rank of their parse trees, and we may define the truth of such a formula by reference to the truth of its constituent pieces.
\end{proof}

In light of the results of section~\ref{Section.Separating}, we should not expect to get $\Ord$-iterated truth predicates for the infinitary logic $\mathcal{L}_{\Ord,\Ord}(\in,A)$, or even an $\omega$-iterated truth predicate for  $\mathcal{L}_{\Ord,\omega}(\in,A)$, since those recursions have length $\Ord^2$ or $\Ord\cdot\omega$, and $\ETRord$ is not able to prove that such recursions have a solution. Indeed, it follows from the separation results of that section that the existence of an $\omega$-iterated truth predicate for $\mathcal{L}_{\Ord,\omega}(\in)$ is strictly stronger in consistency strength than $\ETRord$.

\section{Forcing theorem implies truth predicate for $\mathcal{L}_{\Ord,\omega}$}\label{Section.Forcing-theorem-implies-truth-for-L_Ord,omega}

In this section, we shall prove the implication $\ref{Main.atomic}\to\ref{Main.Truth-Ord-omega}$ in the main theorem. In order to do so, let us define a particular class forcing notion $\FA$, from whose atomic forcing relation we shall be able to extract a truth predicate. This is an adaptation and generalization of the forcing defined in~\cite[definition~2.4]{HolyKrapfLuckeNjegomirSchlicht2016:Class-forcing}, and the argument here is based on the analysis of~\cite[section~7]{HolyKrapfLuckeNjegomirSchlicht2016:Class-forcing}. Let $A$ be a proper class parameter. Since the existence of a truth predicate relative to $A$ is invariant under finite changes of $A$, we can assume that $A$ has at least two elements and that $A\neq V$. Let $\Coll(\omega,V)$ be the class partial order having as conditions all finite injective partial functions $f\from\omega\to V$. This is the usual forcing to add a bijection from $\omega$ to $V$. To form $\FA$, we take a disjoint union that augments the forcing $\Coll(\omega,V)$ with additional conditions, by setting
 $$\FA\quad = \quad \Coll(\omega,V)\sqcup \set{e_{n,m}\mid n,m\in\omega}\sqcup \set{a_n\mid n\in\omega}$$
where for $f\in\Coll(\omega,V)$ we define
\begin{equation*}
\begin{split}
   f\leq e_{n,m} &\quad\Iff\quad f(n)\in f(m),\text{ and} \\
   f\leq a_n  & \quad\Iff\quad f(n)\in A.
\end{split}
\end{equation*}
In other words, the condition $e_{n,m}$ is by definition precisely the supremum of the conditions $f$ with $f(n)\in f(m)$, and $a_n$ is by definition the supremum of the conditions $f$ for which $f(n)\in A$. Let us also take $\one=\emptyset\in\Coll(\omega,V)$ to be above all the new conditions $e_{n,m}$ and $a_n$. Recall that $A$ has at least two elements, and that $A\ne V$. Using this, it is easy to check that $\FA$ is separative.

In short, we define the forcing $\FA$ to be basically the collapse forcing $\Coll(\omega,V)$, but augmented with some additional conditions that are the suprema of certain useful classes of conditions. In particular, $\Coll(\omega,V)$ is a dense subclass of $\FA$, and consequently a generic filter $G\of\FA$ will be fully determined by $G\intersect\Coll(\omega,V)$, which will be generic for that forcing. Nevertheless, class forcing differs from set forcing in that a dense subclass of the forcing does not necessarily give rise to the same forcing extensions (see~\cite[section~5]{HolyKrapfSchlicht2017:Characterizations-of-pretameness}). The reason is that the extra conditions, such as $e_{n,m}$ and $a_n$ above, allow one to form names in the larger forcing that are not equivalent to any name in the smaller forcing, even though it is dense. Basically, we augmented the forcing with those extra conditions precisely so that we could use those conditions to form $\FA$-names for objects that we could not have been able to name in the $\Coll(\omega,V)$ forcing alone.

Let us illustrate with a few examples. Define the name
 $$\dot\varepsilon=\set{\<\mathop{\rm op}(\check n,\check m),e_{n,m}>\mid n,m\in\omega}.$$
Notice that $\dot\varepsilon$ is a set-sized name---actually, it is countable---even though it seems to carry information about what will ultimately happen with a proper class of conditions $f$. The reason it can do so is precisely because of the supremum conditions $e_{n,m}$ that we added to $\FA$.

Similarly, we may define the name
 $$\dot A=\set{\<\check n,a_n>\mid n\in\omega}.$$
This is not a name for the class $A$ itself, but rather it is the name for the copy of $A$ on $\omega$ that will be induced by the generic bijection. That is, $\dot A$ is the name for the collection of $n\in\omega$ that will correspond to an element of $A$ by the generic bijection. Again, this is a set-sized name, actually countable, which we can form precisely because of the extra conditions $a_n$ that we had added to the forcing.

Lastly, without using the augmented part of $\FA$, let us define for each set $a$ the name
 $$\dot n_a=\set{\<\check k,(n\mapsto a)>\mid k<n\in\omega},$$
where $(n\mapsto a)=\{\<n,a>\}\in\Coll(\omega,V)$ is the finite partial function defined on only the point $n$ and mapping it to $a$. Thus, $\dot n_a$ is the name of the set of numbers $k$ that are less than the natural number $n$ that will get mapped to $a$ by the generic bijection. In other words, since every natural number is the set of smaller numbers, $\dot n_a$ is the name of the number $n$ that will get mapped to $a$.

The following theorem is a generalization of~\cite[theorem~7.3]{HolyKrapfLuckeNjegomirSchlicht2016:Class-forcing} to the internal \GBC\ treatment of class forcing and also to allow a class parameter.

\begin{theorem}\label{Theorem.Atomic-forcing-relation-to-truth-predicate}
  If the forcing $\FA$, for a fixed class parameter $A$, admits forcing relations for atomic formulas, then there is a truth predicate for $\mathcal{L}_{\Ord,\omega}(\in,A)$.
\end{theorem}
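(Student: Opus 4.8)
The plan is to mine a truth predicate for $\mathcal{L}_{\Ord,\omega}(\in,A)$ directly out of the atomic forcing relation of $\FA$, exploiting the fact that $\FA$ collapses $V$ onto $\omega$ and that the names $\dot\varepsilon$, $\dot A$, and $\dot n_a$ record, respectively, the membership relation, the predicate $A$, and the individual elements of $V$ as transported to $\omega$ by the generic bijection. First I would fix, by theorem~\ref{Theorem.Atomic-to-quantifier-free-forcing-relation}, the uniform forcing relation $\forces$ for the quantifier-free infinitary language $\mathcal{L}_{\Ord,0}(\in,V^\FA)$ that the atomic forcing relation for $\FA$ provides; note that $\dot\varepsilon$ and $\dot A$ are merely set-sized (indeed countable) $\P$-names, so no class-name apparatus is needed. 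I would then define a syntactic translation $\varphi\mapsto\varphi^*$ sending each formula of $\mathcal{L}_{\Ord,\omega}(\in,A)$ to a quantifier-free infinitary formula of $\mathcal{L}_{\Ord,0}(\in,V^\FA)$: the atomic formulas $x_i\in x_j$, $x_i=x_j$, $x_i\in\hat A$ go to $\op(x_i,x_j)\in\dot\varepsilon$, $x_i=x_j$, $x_i\in\dot A$; negation and set-sized conjunction are preserved; and a (finite) quantifier block $\forall x\,\psi$ is sent to the infinitary conjunction $\bigwedge_{n\in\omega}\psi^*[\check n/x]$, substituting the check-name of each natural number for the bound variable. This translation is an ordinary set recursion on parse trees and so is available in \GBC\ uniformly, even for nonstandard formulas. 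Substituting the names $\dot n_{a_i}$ for the remaining free variables, I would set $\T(\varphi,\vec a)$ to hold exactly when $\one\forces\varphi^*(\dot n_{\vec a})$.

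Next I would record the basic dictionary between the collapse conditions and these names. The crucial computation is that for each $c$ the condition $(m\mapsto c)$ forces $\dot n_c=\check m$, and that the conditions extending some $(m\mapsto c)$ are dense; together with the equality-respecting and density properties of lemma~\ref{Lemma.Forcing-relations-respects-logical-consequence} this yields the \emph{substitution claim}: for any quantifier-free infinitary $\theta(y)$, $\one\forces\theta(\dot n_c)$ if and only if $(m\mapsto c)\forces\theta(\check m)$ for every $m\in\omega$. For the atomic cases I would check, using that $e_{n,m}$ and $a_n$ are precisely the suprema of the conditions realizing $f(n)\in f(m)$ and $f(n)\in A$, that $\one\forces\op(\dot n_a,\dot n_b)\in\dot\varepsilon$ exactly when $a\in b$, that $\one\forces\dot n_a=\dot n_b$ exactly when $a=b$, that $\one\forces\dot n_a\in\dot A$ exactly when $a\in A$, and in each failing case that $\one$ forces the negation. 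From the substitution claim I would then extract the \emph{quantifier bridge}: $\one\forces\psi^*(\check n)$ for all $n\in\omega$ if and only if $\one\forces\psi^*(\dot n_c)$ for all $c\in V$, both directions by a density argument that, for a fixed $n$, ranges the collapse value $(n\mapsto c)$ over all $c$.

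With these in hand I would prove, by induction on the parse tree of $\varphi$ (a well-founded, set-like induction legitimate in \GBC), that $\one$ \emph{decides} $\varphi^*(\dot n_{\vec a})$ for every $\vec a$. The atomic and negation cases are immediate, and the conjunction case follows because if $\one$ fails to force some conjunct it forces that conjunct's negation and hence the negation of the whole conjunction. Finally I would verify that $\T$ satisfies the Tarskian recursion of definition~\ref{Definition.Truth-predicate-infinitary}: the atomic clauses come from the dictionary above, the conjunction clause from the conjunction clause of $\forces$, the negation clause from decidedness ($\one\forces\neg\varphi^*$ if and only if $\one\not\forces\varphi^*$), and the quantifier clause from the quantifier bridge.

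I expect the main obstacle to be exactly the decidedness of the conjunctions coming from quantifiers. The naive induction breaks down here, because the individual conjuncts $\psi^*[\check n/x]$ are genuinely \emph{not} decided by $\one$---whether the element coded by $n$ lies in the element coded by $m$ depends on the generic---so one cannot argue clause by clause as in the conjunction case. The resolution is to avoid ever taking the induction hypothesis on the $\check n$-conjuncts: instead, once $\one\forces\neg\psi^*(\dot n_{c_0})$ for a witnessing $c_0$, one shows that no condition $q$ can force the whole conjunction $\bigwedge_n\psi^*(\check n)$, by extending $q$ to some $q'\leq(m\mapsto c_0)$ and using $q'\forces\dot n_{c_0}=\check m$ to manufacture a condition forcing $\psi^*(\dot n_{c_0})$, a contradiction. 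This density manipulation with the collapse conditions, rather than a formula-by-formula induction, is the heart of the argument.
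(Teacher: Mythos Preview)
Your proposal is correct and follows essentially the same approach as the paper: the translation $\varphi\mapsto\varphi^*$, the definition $\T(\varphi,\vec a)\Leftrightarrow\one\forces\varphi^*(\dot n_{\vec a})$, and the crucial density manipulation with the collapse conditions in the quantifier case are all exactly as in the paper's proof. The only organizational difference is that the paper packages the key induction as a single lemma asserting $p\forces\varphi^*(\dot n_{\vec a})\Leftrightarrow\one\forces\varphi^*(\dot n_{\vec a})$ for arbitrary $p$, whereas you isolate the ``substitution claim'' and ``quantifier bridge'' separately and phrase the induction as decidedness by $\one$; these formulations are interderivable and the underlying argument is the same.
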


\begin{proof} Fix the class $A$ and suppose that the forcing $\FA$ admits forcing relations for atomic formulas. It follows by theorem~\ref{Theorem.Atomic-to-quantifier-free-forcing-relation} that we have a uniform forcing relation for quantifier-free infinitary assertions in the forcing language $\mathcal{L}_{\Ord,0}(\in,V^{\FA})$.

Our proof will make use of the following purely syntactical translation $\varphi\mapsto\varphi^\star$, where $\varphi\in\mathcal{L}_{\Ord,\omega}(\in,A)$ and $\varphi^\star\in\mathcal{L}_{\Ord,0}(\in,V^{\FA})$. The translation is defined by recursion on $\varphi$ as follows:
\begin{equation*}
 \begin{split}
     ({x\in y})^\star &\quad=\quad x\mathrel{\dot\varepsilon} y \\
     ({x=y})^\star &\quad=\quad x=y \\
     ({x\in A})^\star &\quad=\quad x\mathrel{\dot\varepsilon}\dot A\\
     (\varphi\wedge\psi)^\star &\quad=\quad \varphi^\star\wedge\psi^\star  \\
     (\neg\varphi)^\star &\quad=\quad \neg\varphi^\star \\
     \bigl(\bigwedge_{i\in I}\varphi_i\bigr)^\star &\quad=\quad \bigwedge_{i\in I}\varphi_i^\star  \\
     (\forall x\, \varphi)^\star &\quad=\quad \bigwedge_{m\in\omega} \varphi^\star(\check m).
 \end{split}
\end{equation*}
The interesting cases are $({x \in y})^\star$ and $(\forall x\, \varphi)^\star$. The idea is that the translation is transforming truth assertions about the structure $\<V,\in,A>$ to the corresponding truth assertions about the structure $\langle\omega,\dot\varepsilon,\dot A\rangle$, which intuitively will be made isomorphic by the generic bijection $a\mapsto(\dot n_a)_G$, where $G$ is generic for $\FA$. The point is that $\in$ assertions in $V$ will correspond to $\dot\varepsilon$ relations in the latter structure, and universal assertions $\forall x\, \varphi$ in $V$ will correspond to countable conjunctions in the latter structure, since every object will be placed at some $n$ by the generic bijection, mapping $a$ to $(\dot n_a)_G$. Intuitively, the translation aims at establishing the equivalence
 $$\<V,\in,A>\satisfies\varphi(a)\quad\Iff\quad V[G]\satisfies \bigl(\langle\omega,\dot\varepsilon_G,\dot A_G\rangle\satisfies\varphi^\star((\dot n_a)_G)\bigr).$$
Formalizing this equivalence at the outset, however, is somewhat problematic, because the left-hand side is not expressible unless we already have the desired truth predicate, and the right-hand side makes truth assertions in $V[G]$, which is not a model of any decent set theory, as we have collapsed $V$ to become countable, and so it isn't clear to what extent $V[G]$ has a truth predicate for the structure $\langle\omega,\dot\varepsilon_G,\dot A_G\rangle$. One might hope alternatively for the right hand side to work directly with the structure $\langle\omega,\dot\varepsilon_G,\dot A_G\rangle$, using an externally defined truth predicate; but this will not work properly with non-standard models, since there will be nonstandard formulas $\varphi^\star$ whose truth in that structure will not have a clear meaning in the meta-theory (although this approach does work when the original model is transitive).

Nevertheless, without formalizing the equivalence displayed above, we may instead take it as inspiring an idea that we are able to formalize, specifically, the definition in \GBC\ of a certain predicate $\Tr(\varphi,\vec a)$, which we shall prove is a truth predicate. The idea is similar to that of theorem~\ref{Theorem.Forcing-relation-as-name-for-truth-predicate}.  Namely, if $\varphi$ is an infinitary formula in the language $\mathcal{L}_{\Ord,\omega}(\in,A)$ and $\vec a=\<a_0,\ldots,a_k>$ is a valuation of the (finitely many) free variables of $\varphi$, then we define
  $$\Tr(\varphi,\vec a)\quad\Iff\quad \one\forces_{\FA}\varphi^\star(\dot n_{a_0},\ldots,\dot n_{a_k}).$$
The definition uses the forcing relation in $\FA$ only in the case of the formulas $\varphi^\star$, which are quantifier-free infinitary assertions in the forcing language $\mathcal{L}_{\Ord,0}(\in,V^{\FA})$, and we have a uniform forcing relation for that language by theorem~\ref{Theorem.Atomic-to-quantifier-free-forcing-relation}. Let us prove that this is indeed a truth predicate.

\begin{sublemma}\label{Lemma.p-forces-iff-one-forces}
 For any formula $\varphi\in\mathcal{L}_{\Ord,\omega}(\in,A)$, any sets $a_0,...,a_k$ and any condition $p$,
 $$p\forces\varphi^\star(\dot n_{a_0},\ldots,\dot n_{a_k})\qquad\text{ if and only if }\qquad\one\forces\varphi^\star(\dot n_{a_0},\ldots,\dot n_{a_k}).$$
\end{sublemma}

\begin{proof}
We proceed by induction on the formula $\varphi$. It is important here that the only names appearing as parameters in $\varphi^\star$ have the form $\dot n_a$ for some $a$; for example, the lemma is not true if one allows parameters of the form $\check n$, since if $p(n)$ is defined, then $p$ will force things about $\check n$ that other conditions will not. For the atomic case, it is easy to verify that
\begin{align*}
  p\forces\dot n_a=\dot n_b &\qquad\text{ if and only if }\qquad a=b \\
  p\forces\dot n_a\mathrel{\dot\varepsilon}\dot n_b\ &\qquad\text{ if and only if }\qquad a\in b \\ 
  p\forces\dot n_a\mathrel{\dot\varepsilon}\dot A\ &\qquad\text{ if and only if }\qquad a\in A 
\end{align*}
because we can simply extend $p$ to a condition that decides the exact values of $\dot n_a$ and $\dot n_b$. It follows that if $p\forces \dot n_a=\dot n_b$, $p\forces\dot n_a\mathrel{\dot\varepsilon}\dot n_b$, or $p\forces\dot n_a\mathrel{\dot\varepsilon}\dot A$, then $\one$ also forces that statement, which verifies the atomic case of the lemma. Next, for conjunctions, suppose that $p\forces\bigl(\bigwedge_i\varphi_i(\dot n_a)\bigr)^\star$, which by the definition of the $\star$-translation means $p\forces\varphi_i^\star(\dot n_a)$ for every $i$ (for notational simplicity, we consider just one parameter $\dot n_a$). By the induction hypothesis, this means $\one\forces\varphi_i^\star(\dot n_a)$ for every $i$ and so $\one\forces\bigl(\bigwedge_i\varphi_i(\dot n_a)\bigr)^\star$, as desired. For negation, if $p\forces\neg\varphi^\star(\dot n_a)$, it means there is no $q\leq p$ with $q\forces \varphi^\star(\dot n_a)$. But now, in fact there is no $q$ at all with $q\forces\varphi^\star(\dot n_a)$, for if there were such a $q$, then by induction we would have $\one\forces\varphi^\star(\dot n_a)$, contrary to our assumption that $p\forces\neg\varphi^\star(\dot n_a)$.

Finally, we consider the universal quantifier case. Since universal quantifiers get $\star$-trans\-la\-ted to conjunctions over the natural numbers, let us suppose that $p\forces\bigwedge_{m\in\omega}\varphi^\star(\check m,\dot n_a)$. If $\one$ does not force this conjunction, then there is some condition forcing the negation, and by strengthening further we find a condition $q$ forcing $\neg\varphi^\star(\check m,\dot n_a)$ for some particular $m$. Notice that $\varphi^\star(\check m,\dot n_a)$ does not fall under the induction assumption, since the name $\check m$ is not of the form $\dot n_a$. Nevertheless, by strengthening further if necessary, we may assume $q(m)$ is defined and thus $q\forces\check m=\dot n_b$ for some $b$. It follows by lemma~\ref{Lemma.Forcing-relations-respects-logical-consequence} that $q\forces\neg\varphi^\star(\dot n_b,\dot n_a)$. By the induction hypothesis (and the negation case), it follows now that $\one\forces\neg\varphi^\star(\dot n_b,\dot n_a)$. By strengthening $p$ to a condition $p'$ forcing $\dot n_b=\check m$ for some $m$, this contradicts our assumption that $p\forces\bigwedge_{m\in\omega}\varphi^\star(\check m,\dot n_a)$.
\end{proof}

We now proceed to prove that the class $\Tr$ defined before the lemma is a truth predicate for the language $\mathcal{L}_{\Ord,\omega}(\in,A)$. We need simply to verify the recursive requirements of definition~\ref{Definition.Truth-predicate-infinitary}. For the atomic case, the equivalences
\begin{equation*}
  \begin{split}
     a=b    &\quad\text{ if and only if }\quad \one\forces\dot n_a=\dot n_b\\
     a\in b &\quad\text{ if and only if }\quad \one\forces\dot n_a\mathrel{\dot\varepsilon}\dot n_b \\
     a\in A &\quad\text{ if and only if }\quad \one\forces\dot n_a\mathrel{\dot\varepsilon}\dot A,
  \end{split}
\end{equation*}
follow essentially by design from the definitions of $\dot n_a$, $\dot\varepsilon$ and $\dot A$.

Next, let's check that our definition performs Boolean logic correctly. For simplicity, allow us to consider the case of just one parameter $a$ rather than $\vec a$. In the case of conjunctions, we have $\Tr(\bigwedge_i\varphi_i,a)$ just in case $\one\forces\bigwedge_i\varphi_i^\star(\dot n_a)$, which holds if and only if $\one\forces\varphi_i^\star(\dot n_a)$ for each $i$, which by induction is equivalent to $\Tr(\varphi_i,a)$ for all $i$, as desired.

Consider next negation. If $\Tr(\varphi,a)$ holds, then $\one\forces\varphi^\star(\dot n_a)$, and so it is not the case that $\one\forces\neg\varphi^\star(\dot n_a)$ and so $\Tr(\neg\varphi,a)$ fails. Conversely, if $\Tr(\varphi,a)$ does not hold, then $\one\not\forces\varphi^\star(\dot n_a)$. It follows by the lemma that no condition $p$ can force $\varphi^\star(\dot n_a)$ and consequently $\one\forces\neg\varphi^\star(\dot n_a)$, which implies that $\Tr(\neg\varphi,a)$ holds, thereby fulfilling the desired negation requirement.

Finally, consider the quantifier case. Suppose that $\Tr(\forall x\, \varphi(x),a)$ holds. By definition, this means $\one\forces\bigl(\forall x\, \varphi(x) \bigr)^\star(\dot n_a)$, and by definition of the $\star$-translation, this means $\one\forces\bigwedge_{m\in\omega}\varphi^\star(\check m,\dot n_a)$, which is equivalent to $\one\forces\varphi^\star(\check m,\dot n_a)$ for every $m$. For any set $b$, there is a dense class of conditions $q\leq\one$ with $q\forces\dot n_b=\check m$ for some $m$ and consequently $q\forces\varphi^\star(\dot n_b,\dot n_a)$.  Since the $q$ are dense, this implies $\one\forces\varphi^\star(\dot n_b,\dot n_a)$, and so we have $\Tr(\varphi,b\concat a)$ for every set $b$, as required. Conversely, if $\Tr(\varphi,b\concat a)$ for every set $b$, then $\one\forces\varphi^\star(\dot n_b,\dot n_a)$ for every $b$, and from this it follows that $\one\forces\varphi(\check m,\dot n_a)$ for any particular $m$, since it is dense to force $\check m=\dot n_b$ for some $b$. Thus, $\one\forces\bigwedge_{m\in\omega}\varphi^\star(\check m,\dot n_a)$ and consequently $\Tr(\forall x\,\varphi,a)$, as desired.
\end{proof}

\section{Truth predicate for $\mathcal{L}_{\Ord,\omega}(\in,A)$ implies $\Ord$-iterated truth predicate for $\mathcal{L}_{\omega,\omega}(\in,A)$}\label{Section.Truth-predicate-for-L_{Ord,omega}-implies-Ord-iterated-truth}

In this section, we prove the implication $\ref{Main.Truth-Ord-omega}\to\ref{Main.Iterated-truth}$. To begin, let us define what it means to have an $\Ord$-iterated truth predicate. The idea is to have a truth predicate that applies not only to statements in the language of set theory, but to statements about (earlier stages of) truth in that language. So the $\Ord$-iterated truth predicate will make truth assertions in ordinal stages.

\begin{definition}\label{Definition.Ord-iterated-truth-first-order}\rm
An $\Ord$-iterated truth predicate for first-order truth, with a class parameter $A$, is a class $\Tr$ consisting of triples $\<\beta,\varphi,\vec a>$, where $\beta$ is an ordinal, $\varphi$ is a formula in the first-order language of set theory augmented with a predicate for $A$ and also with a trinary predicate symbol $\hat\Tr$ to be used for (iterated) truth assertions, and $\vec a$ is a valuation mapping the free variables of $\varphi$ to corresponding parameters, such that the following recursion is satisfied:
\begin{enumerate}[(a)]
  \item $\Tr$ judges the truth of $\{=,\in,\hat A\}$-atomic statements correctly:
          \begin{align*}
            \Tr(\beta,x=y,\<a,b>)\quad    &\text{ if and only if }\quad a=b\\
            \Tr(\beta,x\in y,\<a,b>)\quad &\text{ if and only if } \quad a\in b\\
            \Tr(\beta,x\in\hat A,a)\qquad  &\text{ if and only if }\quad a\in A
          \end{align*}
  \item $\Tr$ judges atomic assertions of the truth predicate self-coherently:
            \begin{align*}
              &\Tr(\beta,\hat\Tr(x,y,z),\<\alpha,\varphi,\vec a>)\\
              &\qquad\text{ if and only if }\quad \alpha<\beta\text{ and }\Tr(\alpha,\varphi,\vec a)
            \end{align*}
  \item $\Tr$ performs Boolean logic correctly:
            \begin{align*}
              \Tr(\beta,\varphi\wedge\psi,\vec a)\quad & \text{ if and only if }\quad \Tr(\beta,\varphi,\vec a)\text{ and }\Tr(\beta,\psi,\vec a)\\
              \Tr(\beta,\neg\varphi,\vec a)\quad & \text{ if and only if }\quad \neg\Tr(\beta,\varphi,\vec a)
            \end{align*}
  \item $\Tr$ performs quantifier logic correctly:
            $$\Tr(\beta,\forall x\, \varphi,\vec a)\quad\text{ if and only if }\quad\forall b\, \Tr(\beta,\varphi,b\concat\vec a)$$
\end{enumerate}
\end{definition}

Note the crucial clause (b), which insists that atomic truth assertions made at stage $\beta$ can refer only to earlier stages of truth, $\alpha<\beta$. When the formula $\varphi$ has no free variables, then to improve readability we shall write $\Tr(\beta,\varphi)$ in place of $\Tr(\beta,\varphi,\<>)$, since in this case the valuation is empty.

When $\Tr(\beta,\varphi,\vec a)$, then we shall say that $\varphi[\vec a]$ is declared true by the predicate at stage $\beta$. In this way, we can extract from the uniform truth predicate the sequence of individual truth predicates $\Tr_\beta(\varphi,\vec a)$ for the truth assertions made at each stage $\beta$. These predicates perform as one would want for the non-uniform manner of iterated truth, where one makes truth assertions always only at a particular stage of truth, with a separate predicate symbol for each stage. We should like to emphasize, however, that the uniform truth predicate is stronger than this, because it allows formulas to quantify over the stages of truth. For example, the uniform iterated truth predicate allows one to express various liar-type sentences, which have an interesting nature with respect to the iterated truth predicate.

Consider the following instance, a sentence $\sigma$ whose truth value will systematically and endlessly alternate between true and false at successive ordinal stages. Specifically, the sentence $\sigma$ expresses that there is no immediately preceding stage at which $\sigma$ is true. To formalize this in the language of the iterated truth predicate, let $\top$ be any tautologically valid sentence, such as $\forall x\, x=x$, and notice that in light of requirement (b) above, the assertion $\hat\Tr(\alpha,\top)$ is judged true at stage $\beta$ if and only if $\alpha<\beta$; we can use this feature to quantify in effect over the `earlier' stages of truth. By the \Godel-Carnap fixed-point theorem, which asserts of every formula $\varphi(x)$ that there is a sentence $\sigma$ such that our theory proves $\sigma\iff\varphi(\sigma)$, it follows that there is a sentence $\sigma$ for which:
    $$\sigma\longleftrightarrow\neg\exists\alpha\,\bigl[\Tr(\alpha,\sigma)\wedge \Tr(\alpha,\top)\wedge\neg\Tr(\alpha+1,\top)\bigr],$$
and furthermore, this equivalence is valid at every stage of truth. Said plainly, $\sigma$ asserts that it isn't the case that $\sigma$ is true at a stage which is the largest `previous' stage. At stage $0$, there is no largest previous stage, and so indeed $\sigma$ is true at stage $0$. Thus, it will become false at stage $1$, and therefore true again at stage $2$. Whenever $\sigma$ is true at stage $\beta$, then it will become false at stage $\beta+1$ and true again at stage $\beta+2$. The sentence $\sigma$ will be true at limit ordinal stages, since there is no largest previous stage. So $\sigma$ flips between true and false for the iterated truth predicate, being true at every even ordinal stage and false at every odd ordinal stage, even though it is the very same sentence being considered each time.

Similar constructions yield sentences $\sigma$ that are true exactly at the stages in a class $A$ or exactly at certain other stages in a way that is convenient. Such kind of sentences are not generally possible in the weaker language having only truth predicates $\Tr_\beta$ for each stage of truth separately, without the possibility of quantifying over the index $\beta$, since in that language a sentence $\varphi$ would refer to only finitely many of those predicates, and therefore the truth or falsity of the sentence would stabilize at stages of truth beyond those explicitly mentioned in $\varphi$.

\begin{theorem}\label{Theorem.Truth-Ord-omega-implies-Iterated-truth}
For any class $A$, if there is a truth predicate for the infinitary language $\mathcal{L}_{\Ord,\omega}(\in,A)$, then there is an $\Ord$-iterated truth predicate in the first-order language $\mathcal{L}_{\omega,\omega}(\in,\hat\Tr,A)$.
\end{theorem}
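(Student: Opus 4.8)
The plan is to reduce the desired iterated truth predicate to the given infinitary truth predicate $T$ for $\mathcal{L}_{\Ord,\omega}(\in,A)$ by means of a purely syntactic translation. Writing $\hat\Tr$ for the trinary truth symbol of Definition~\ref{Definition.Ord-iterated-truth-first-order}, I will define a translation $\varphi\mapsto\varphi^\beta$ sending each first-order formula $\varphi\in\mathcal{L}_{\omega,\omega}(\in,\hat\Tr,A)$ and each ordinal $\beta$ to an infinitary formula $\varphi^\beta\in\mathcal{L}_{\Ord,\omega}(\in,A)$, and then simply set
$$\Tr(\beta,\varphi,\vec a)\quad\Iff\quad T\text{ judges }\varphi^\beta\text{ true under the valuation }\vec a.$$
The translation will commute with the first-order connectives and quantifiers, so that $(\neg\varphi)^\beta=\neg\varphi^\beta$, $(\varphi\wedge\psi)^\beta=\varphi^\beta\wedge\psi^\beta$, and $(\forall x\,\varphi)^\beta=\forall x\,\varphi^\beta$, and it will fix the $\{=,\in,\hat A\}$-atomic formulas. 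The entire content of the construction is therefore in the clause handling the truth symbol $\hat\Tr$.

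For an atomic occurrence $\hat\Tr(x,y,z)$ at stage $\beta$, I will translate it to the infinitary disjunction, taken over all ordinals $\alpha<\beta$ and all formulas $\psi\in\mathcal{L}_{\omega,\omega}(\in,\hat\Tr,A)$, of the assertion that $x$ names $\alpha$, that $y$ names the code $\ulcorner\psi\urcorner$, and that $\psi^\alpha$ holds of the valuation coded by $z$. Here I will use the elementary fact that in the infinitary language every particular set $c$ is definable by an $\mathcal{L}_{\Ord,\omega}(\in)$-formula $\delta_c(v)$, built by the $\in$-recursion $\delta_c(v)\leftrightarrow\forall u\,(u\in v\leftrightarrow\bigvee_{d\in c}\delta_d(u))$, so that the conditions ``$x=\alpha$'' and ``$y=\ulcorner\psi\urcorner$'' are expressible; and since each $\psi$ has only finitely many free variables, I can unpack the valuation coded by $z$ using finitely many ordinary quantifiers and substitute the results into $\psi^\alpha$, staying within the finite quantifier blocks allowed by $\mathcal{L}_{\Ord,\omega}$. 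Because each disjunct refers only to translations $\psi^\alpha$ at strictly earlier stages $\alpha<\beta$, the whole translation is given by a transfinite recursion on $\beta$ with an inner Tarskian recursion on the complexity of $\varphi$; and crucially this recursion is \emph{set-like}: its domain is the set-sized product of ordinals with the set $\mathcal{L}_{\omega,\omega}(\in,\hat\Tr,A)$ of formulas—the proper-class-many valuations never enter the recursion, being absorbed into the quantifiers inside the formulas—so $\varphi\mapsto\varphi^\beta$ is an ordinary $\GBC$-definable class function requiring no $\ETR$-like principle. The disjunctions are set-sized and each $\varphi^\beta$ retains only finitely many free variables, so the translation does land in $\mathcal{L}_{\Ord,\omega}(\in,A)$.

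With the translation in hand, I will verify that $\Tr$ as defined above satisfies the four recursive requirements of Definition~\ref{Definition.Ord-iterated-truth-first-order}. Clause (a) is immediate from the atomic correctness of $T$ together with the fact that the translation fixes the $\{=,\in,\hat A\}$-atomic formulas. The Boolean clause (c) and quantifier clause (d) follow because the translation commutes with $\neg$, $\wedge$, and $\forall$ while $T$ itself performs Boolean and quantifier logic correctly by Definition~\ref{Definition.Truth-predicate-infinitary}; for instance $\Tr(\beta,\forall x\,\varphi,\vec a)$ unwinds through $T$-truth of $\forall x\,\varphi^\beta$ at $\vec a$ to the statement that $\Tr(\beta,\varphi,b\concat\vec a)$ for every $b$. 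The heart of the verification is the self-coherence clause (b): evaluating $\Tr(\beta,\hat\Tr(x,y,z),\<\alpha,\psi,\vec b>)$ amounts to asking $T$ whether the big disjunction holds of the valuation $\<\alpha,\psi,\vec b>$, and by the coding at most one disjunct can survive—the one indexed by this very $\alpha$ and $\psi$, and only when $\alpha<\beta$ and $y$ genuinely codes $\psi$—in which case what remains is exactly $T$-truth of $\psi^\alpha$ at $\vec b$, namely $\Tr(\alpha,\psi,\vec b)$. This reproduces clause (b) on the nose.

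The main obstacle I anticipate is precisely this $\hat\Tr$-clause and the verification of (b): one must arrange the coding so that the single surviving disjunct contributes exactly $\Tr(\alpha,\psi,\vec b)$ and no spurious truth leaks in from the non-matching disjuncts, and one must confirm that the recursion defining the translation is genuinely set-like, so that it is available in plain $\GBC$ and the hypothesis is used \emph{only} through the single evaluation of the resulting infinitary formulas by $T$. The arithmetic fact underlying the whole scheme is that iterating first-order truth through $\Ord$ stages is a recursion of length $\omega\cdot\Ord=\Ord$, matching exactly the strength packaged in a single $\mathcal{L}_{\Ord,\omega}(\in,A)$ truth predicate.
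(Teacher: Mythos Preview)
Your proposal is correct and follows essentially the same approach as the paper: the paper also defines a syntactic translation $(\beta,\varphi)\mapsto\varphi^*_\beta$ into $\mathcal{L}_{\Ord,\omega}(\in,A)$ that fixes the $\{=,\in,\hat A\}$-atomic formulas, commutes with connectives and quantifiers, and replaces $\hat\Tr(x,y,z)$ at stage $\beta$ by the disjunction over $\xi<\beta$ and $\psi$ of $``x=\xi"\wedge``y=\psi"\wedge\exists\vec a\,(\text{valuation}_\psi(z,\vec a)\wedge\psi^*_\xi(\vec a))$, using the same set-definability lemma and the same observation that this is an ordinary set-like recursion requiring no $\ETR$. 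The verification of clause~(b) proceeds exactly as you outline, by noting that only the disjunct with $\xi=\alpha$ and $\psi=\varphi$ can survive.
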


This will establish implication $\ref{Main.Truth-Ord-omega}\to\ref{Main.Iterated-truth}$ in the main theorem.

\begin{proof}
Suppose that $T(\psi,\vec a)$ is a (non-iterated) truth predicate for the infinitary language $\mathcal{L}_{\Ord,\omega}(\in,A)$, where $A$ is a fixed class parameter. Note that in light of lemma~\ref{Lemma.Every-set-definable}, we don't actually need the parameters $\vec a$, since every set is definable, and so the entire semantic content of the truth predicate is actually contained in its sentences. Nevertheless, we shall carry on with the parameters $\vec a$.

Using the truth predicate $\T$, we shall define another truth predicate, an $\Ord$-iterated truth predicate $\Tr(\beta,\varphi,\vec a)$ for first-order assertions $\varphi$ in the language $\mathcal{L}_{\omega,\omega}(\in,\hat\Tr,A)$. In order to do so, we first define a certain syntactic translation
    $$(\beta,\varphi)\mapsto \varphi^*_\beta,$$
where $\beta$ is an ordinal and $\varphi$ is a formula in the first-order language with an iterated truth predicate \hbox{$\mathcal{L}_{\omega,\omega}(\in,\hat\Tr,A)$}. The resulting formula $\varphi^*_\beta$ is an assertion in \hbox{$\mathcal{L}_{\Ord,\omega}(\in,A)$}, without any truth predicate. This translation is defined by induction on $\beta$ and $\varphi$, as follows:

\begin{enumerate}[(a)]
  \item Atomic formulas not mentioning the truth predicate are not changed by the translation:
   $$(x\in y)^*_\beta\quad=\quad(x\in y),$$
   $$(x=y)^*_\beta\quad=\quad(x=y),$$
   $$(x\in\hat A)^*_\beta\quad=\quad(x\in\hat A).$$
  \item $\hat\Tr(x,y,z)^*_\beta$ is the assertion $$\bigvee_{\xi<\beta\atop \psi\in\mathcal{L}_{\omega,\omega}(\in,\hat\Tr,\hat A)}\hskip-1.5em\bigl[``x=\xi"\wedge ``y=\psi"\wedge\exists\vec a\ \text{valuation}_\psi(z,\vec a)\wedge\psi^*_\xi(\vec a)\bigr].$$
  \item $(\varphi\wedge\psi)^*_\beta=\varphi^*_\beta\wedge\psi^*_\beta$.
  \item $(\forall x\, \varphi)^*_\beta=\forall x\, \varphi^*_\beta$.
\end{enumerate}

The key translation occurs in step (b), replacing atomic instances of the truth predicate with certain infinitary formulas. The formula $\text{valuation}_\psi(z,\vec a)$ asserts that $z$ is a valuation mapping the variables that happen to be free in $\psi$ to the objects in $\vec a$. Basically, the stage $\beta$ translation of the atomic truth assertion $\hat\Tr(x,y,z)$ is the assertion that (i) $x$ is some stage $\xi$ less than $\beta$; (ii) $y$ is some formula $\psi$; and (iii) $z$ is a valuation of the free variables of that formula to objects $\vec a$ for which $\psi^*_\xi(\vec a)$ holds. In particular, in statement (b) we used the expressions $``x=\xi"$ and $``y=\psi"$, and by this we mean the formulas $\theta_\xi(x)$ and $\theta_\psi(y)$ provided by lemma~\ref{Lemma.Every-set-definable}, which as we mentioned does not involve using $\xi$ or $\psi$ (or any code for $\psi$) as a parameter. We should also like to emphasize that we do not need $\ETRord$ in order to make this recursive definition of the translation, since it is an ordinary transfinite recursion of length $\Ord$ on sets, not a class recursion.

We now define our proposed iterated truth predicate $\Tr(\beta,\varphi,\vec a)$ to hold if and only if $\T(\varphi^*_\beta,\vec a)$. We claim that this relation fulfills the requirements to be an iterated truth predicate.

Because $\T$ is a truth predicate for assertions in $\mathcal{L}_{\Ord,\omega}(\in,A)$, it follows easily that our iterated truth predicate $\Tr$ works correctly on $\{\in,=,\hat A\}$-atomic formulas and on Boolean combinations and quantifiers. The only difficult part is to verify that we have judged the atomic truth assertions themselves in a self-coherent manner. What we need to prove is that $\Tr(\beta,\hat\Tr(x,y,z),\<\alpha,\varphi,\vec a>)$ holds if and only if $\alpha<\beta$ and $\Tr(\alpha,\varphi,\vec a)$.

To see this, notice that $\Tr(\beta,\hat\Tr(x,y,z),\<\alpha,\varphi,\vec a>)$ holds, by the definition of $\Tr$, just in case $\T(\hat\Tr(x,y,z)^*_\beta,\<\alpha,\varphi,\vec a>)$ holds, which is equivalent to
    $$\T\bigl(\hskip-1.5em\bigvee_{\xi<\beta\atop \psi\in\mathcal{L}_{\omega,\omega}(\in,\hat\Tr,\hat A)}\hskip-1.5em\bigl(``x=\xi"\wedge ``y=\psi"\wedge\psi^*_\xi\bigr)\, ,\, \<\alpha,\varphi,\vec a>\bigr).$$
Since $\T$ is a truth predicate, we may unwrap the meaning of this disjunction to see that this holds if and only if $\alpha<\beta$ and $\T(\varphi^*_\alpha,\vec a)$, since the disjuncts can be realized only with $\alpha=\xi$ and $\varphi=\psi$ themselves. By the definition of $\Tr$, this is equivalent to $\Tr(\alpha,\varphi,\vec a)$. So we have verified that $\Tr$ is an iterated truth predicate for first-order truth with the class parameter $A$, as desired.
\end{proof}

One can extract from this argument a corresponding result for having merely a $\kappa$-iterated truth predicate, given a truth predicate for the $\mathcal{L}_{\kappa,\omega}$ language of set theory.

\begin{theorem}
  For any class $A$, if there is a truth predicate for the infinitary language $\mathcal{L}_{\kappa,\omega}(\in,\hat A)$, where $\kappa$ is any uncountable cardinal, then there is an $\kappa$-iterated truth predicate in the first-order language $\mathcal{L}_{\omega,\omega}(\in,\hat\Tr,\hat A)$.
\end{theorem}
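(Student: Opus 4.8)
The plan is to mimic the proof of theorem~\ref{Theorem.Truth-Ord-omega-implies-Iterated-truth} essentially verbatim, replacing the class-length iteration by an iteration of length $\kappa$ and checking at each step that the infinitary formulas produced remain inside $\mathcal{L}_{\kappa,\omega}$. By a $\kappa$-iterated truth predicate we mean, of course, the evident restriction of definition~\ref{Definition.Ord-iterated-truth-first-order} in which all stages $\beta$ are required to be ordinals below $\kappa$. So suppose $\T(\psi,\vec a)$ is a truth predicate for $\mathcal{L}_{\kappa,\omega}(\in,\hat A)$. Exactly as before, I would define by recursion on $\beta<\kappa$ and on $\varphi$ a syntactic translation $(\beta,\varphi)\mapsto\varphi^*_\beta$, sending a first-order formula $\varphi\in\mathcal{L}_{\omega,\omega}(\in,\hat\Tr,\hat A)$ to an assertion $\varphi^*_\beta\in\mathcal{L}_{\kappa,\omega}(\in,\hat A)$, using the same clauses as in that theorem: atomic $\{\in,=,\hat A\}$-formulas are left unchanged, conjunction and quantification are translated homomorphically, and the atomic truth assertion $\hat\Tr(x,y,z)$ is translated at stage $\beta$ into
$$\bigvee_{\xi<\beta\atop \psi\in\mathcal{L}_{\omega,\omega}(\in,\hat\Tr,\hat A)}\hskip-1.5em\bigl[``x=\xi"\wedge ``y=\psi"\wedge\exists\vec a\ \text{valuation}_\psi(z,\vec a)\wedge\psi^*_\xi(\vec a)\bigr].$$
Since $\kappa$ is a set, this is an ordinary transfinite recursion of length $\kappa$ on sets, requiring no $\ETRord$ assumption, and I would then define the proposed $\kappa$-iterated truth predicate by $\Tr(\beta,\varphi,\vec a)\iff\T(\varphi^*_\beta,\vec a)$ for $\beta<\kappa$.

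The one point genuinely requiring the hypothesis on $\kappa$, and the only real difference from the class-length case, is the verification that each $\varphi^*_\beta$ is a \emph{legitimate} formula of $\mathcal{L}_{\kappa,\omega}(\in,\hat A)$. The displayed disjunction in the truth clause has $|\beta|\cdot\aleph_0$ many disjuncts, since there are $|\beta|<\kappa$ choices of $\xi$ and only countably many first-order formulas $\psi$; because $\kappa$ is an uncountable cardinal we have both $\aleph_0<\kappa$ and $|\beta|<\kappa$, so $\max(|\beta|,\aleph_0)<\kappa$ and the disjunction respects the required $<\kappa$ size bound. Moreover each disjunct has only the free variables $x,y,z$, the tuple $\vec a$ being existentially bound, so the whole disjunction has finitely many free variables collectively, as $\mathcal{L}_{\kappa,\omega}$ demands; and since a first-order $\varphi$ has only finitely many atomic $\hat\Tr$-subformulas and only finite quantifier blocks, the full translation $\varphi^*_\beta$ is built from finitely many such $<\kappa$-sized disjunctions and hence lies in $\mathcal{L}_{\kappa,\omega}(\in,\hat A)$. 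I would also check that the parameter-free defining formulas $``x=\xi"=\theta_\xi(x)$ and $``y=\psi"=\theta_\psi(y)$ are still available in this language: each ordinal $\xi<\kappa$ is defined by $\forall w\,(w\in x\iff\bigvee_{\eta<\xi}\theta_\eta(w))$, a disjunction of size $|\xi|<\kappa$ with the single free variable $x$, while each formula-code $\psi$ is hereditarily finite and therefore already first-order definable.

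With the translation in hand, the verification that $\Tr$ meets the requirements of the $\kappa$-analogue of definition~\ref{Definition.Ord-iterated-truth-first-order} proceeds unchanged, all ordinal stages now restricted to $\beta<\kappa$. The atomic, Boolean, and quantifier clauses follow immediately from $\T$ being a truth predicate for $\mathcal{L}_{\kappa,\omega}(\in,\hat A)$, and the crucial self-coherence clause, that $\Tr(\beta,\hat\Tr(x,y,z),\<\alpha,\varphi,\vec a>)$ holds precisely when $\alpha<\beta$ and $\Tr(\alpha,\varphi,\vec a)$, follows by unwrapping the disjunction: only the disjunct indexed by $\xi=\alpha$ and $\psi=\varphi$ can be realized, and that disjunct requires $\alpha<\beta$ together with $\T(\varphi^*_\alpha,\vec a)$, which is exactly $\Tr(\alpha,\varphi,\vec a)$. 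I expect no substantial obstacle here: the whole argument is a routine localization of theorem~\ref{Theorem.Truth-Ord-omega-implies-Iterated-truth}, and the only place demanding care is the cardinality bookkeeping of the middle paragraph, where the uncountability of $\kappa$ is precisely what keeps the translated disjunctions within $\mathcal{L}_{\kappa,\omega}$.
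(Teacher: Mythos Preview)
Your proposal is correct and takes essentially the same approach as the paper: the paper's proof consists of the single observation that the disjunction in the truth clause of the translation from theorem~\ref{Theorem.Truth-Ord-omega-implies-Iterated-truth} has size $\omega\cdot\beta<\kappa$ when $\beta<\kappa$ and $\kappa$ is an uncountable cardinal, so the same translation and definition of $\Tr$ go through unchanged. You have simply spelled out in more detail the cardinality bookkeeping that the paper compresses into one sentence, including the check that the defining formulas $\theta_\xi$ and $\theta_\psi$ remain in $\mathcal{L}_{\kappa,\omega}$.
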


\begin{proof}
The point is that to define $\Tr(\beta,\varphi,\vec a)$ in the previous theorem, we took a disjunction of size $\omega\cdot\beta$, which will still be less than $\kappa$ for $\beta<\kappa$, if $\kappa$ is an uncountable cardinal. So exactly the same translation and definition of $\Tr$ works up to $\kappa$.
\end{proof}

\section{Iterated truth predicates imply the elementary transfinite recursion principles}\label{Section.Ord-iterated-truth-implies-ETRord}

In this section, we shall prove implication $\ref{Main.Iterated-truth}\to\ref{Main.ETRord}$ in the main theorem, and indeed, we shall prove $\ref{Main.Iterated-truth}\iff\ref{Main.ETRord}$. To do so, we shall undertake a refinement of the following theorem of Fujimoto~\cite{Fujimoto2012:Classes-and-truths-in-set-theory} (see also Gitman and Hamkins~\cite{GitmanHamkins2016:OpenDeterminacyForClassGames}).

\begin{theorem}
  The principle of elementary transfinite recursion $\ETR$ is equivalent over \GBC\ to the existence of iterated truth predicates along any well-founded class relation.
\end{theorem}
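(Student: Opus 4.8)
The plan is to prove the equivalence in both directions, over \GBC. Throughout, I read an \emph{iterated truth predicate along a well-founded class relation} $\prec$ on a class $D$ as the evident generalization of Definition~\ref{Definition.Ord-iterated-truth-first-order}: the stage parameter ranges over nodes $n\in D$, and the self-coherence clause~(b) is relaxed from $\alpha<\beta$ to $m\prec n$, so that truth declared at a node $n$ may refer, via the predicate symbol $\hat\Tr$, only to truth declared at $\prec$-predecessors of $n$. The forward direction exhibits such a predicate as the solution of a single elementary recursion, and the reverse direction reads a solution of a given recursion off such a predicate by a fixed-point construction.

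For the forward direction, $\ETR\to$ iterated truth predicates, the point is that building the predicate is itself an elementary transfinite recursion. I would consider the class of evaluation tasks $\langle n,\varphi,\vec a\rangle$, with $n\in D$, $\varphi$ a formula of $\mathcal{L}_{\omega,\omega}(\in,\hat\Tr,A)$ and $\vec a$ a valuation, and assign to each the lexicographic rank $(\rank_\prec(n),k)$, where $k$ is the finite syntactic complexity of $\varphi$. The Tarskian clauses then determine the truth value of a task by a first-order condition on tasks of strictly smaller rank: the two subformula tasks at the same stage for a conjunction, the subformula task for a negation, the oracle task $\langle m,\psi,\vec b\rangle$ with $m\prec n$ for an atomic assertion $\hat\Tr(\dots)$ whose valuation names $m$, $\psi$, $\vec b$, and—in the universal case $\forall x\,\psi$—the proper class of instances $\langle n,\psi,b\concat\vec a\rangle$, all of strictly smaller complexity and hence strictly smaller rank. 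Every $\lhd$-step strictly lowers this ordinal rank, so the relation is well-founded, and organizing the recursion by the rank turns it into a recursion of length equal to the rank of the combined relation. The proper-class branching of the universal clause is absorbed exactly as in Theorem~\ref{Theorem.ETRord-implies-uniform-forcing-relation}, since each slice is defined by a first-order formula free to quantify over $V$ and to consult the entire partial solution. Thus $\ETR$ produces a solution, which is precisely an iterated truth predicate along $\prec$.

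For the reverse direction, iterated truth predicates $\to\ETR$, fix an instance of the recursion: a well-founded class relation $\prec$ on $D$, a class parameter $A$, and a first-order formula $\varphi(x,F,A)$, a solution being a class $S$ with $S_n=\{x\mid\varphi(x,S\restriction n,A)\}$ for all $n$, where $S\restriction n=S\cap(\{m\mid m\prec n\}\times V)$. The plan is to fold the recursion into one self-referential formula and then extract its solution from an iterated truth predicate. By the \Godel-Carnap fixed-point theorem applied in $\mathcal{L}_{\omega,\omega}(\in,\hat\Tr,A)$, I would obtain a formula $\theta(x)$ provably equivalent to $\varphi(x,F,A)$ with each atomic membership query ``$\langle m,z\rangle\in F$'' rewritten as the truth assertion $\hat\Tr(m,\ulcorner\theta\urcorner,z)$; informally $\theta$ says that $x$ satisfies the recursion rule relative to the class of pairs $\langle m,z\rangle$ with $m\prec n$ at which $\theta$ is itself declared true. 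Letting $\Tr$ be an iterated truth predicate along $\prec$ for this language, set $S=\{\langle n,x\rangle\mid\Tr(n,\theta,\langle x\rangle)\}$. Then $S\restriction n=\{\langle m,z\rangle\mid m\prec n\text{ and }\Tr(m,\theta,\langle z\rangle)\}$ by construction, and clause~(b) guarantees that at stage $n$ the atomic $\hat\Tr$-assertions of $\theta$ are judged exactly by membership in this class; unwinding $\theta$ through the Boolean and quantifier clauses then yields $\Tr(n,\theta,\langle x\rangle)\Iff\varphi(x,S\restriction n,A)$ directly, with no appeal to induction, so $S$ is a solution.

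The step I expect to be the main obstacle is the reverse direction—also the implication $\ref{Main.Iterated-truth}\to\ref{Main.ETRord}$ that the main theorem needs—and specifically the bookkeeping of the fixed-point encoding. One must rewrite the $F$-occurrences of $\varphi$ so that each query refers only to pairs $\langle m,z\rangle$ with $m\prec n$, precisely the pairs that clause~(b) is willing to resolve against strictly earlier stages, and then argue that the resulting $\theta$ reconstructs $S\restriction n$ faithfully at every node simultaneously rather than only at inductively accessible ones; this uniformity is exactly what the self-coherence discipline of the iterated truth predicate supplies. For the refinement to the $\Ord$-length case one must additionally line up the stage ordering of the iterated truth predicate with the rank of the given recursion—using the fact from~\cite[lemma~7]{GitmanHamkins2016:OpenDeterminacyForClassGames} that $\ETRord$ is equivalent to recursion along well-founded class relations of rank $\Ord$—and exploit that for first-order formulas the per-stage complexity contribution is finite, so that the combined rank over stages $\beta<\Ord$ stays of the form $\omega\cdot\beta+k<\Ord$; this is what allows a plain $\Ord$-iterated truth predicate for the first-order language, as in statement~\ref{Main.Iterated-truth}, to suffice, in contrast to the infinitary case where formula complexity alone can reach $\Ord$.
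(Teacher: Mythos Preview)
Your proposal is correct and follows essentially the same approach as the paper: the forward direction organizes the Tarskian recursion by the lexicographic rank $(\rank_\prec(n),k)$, which is exactly the paper's observation that the iterated truth predicate is defined by a recursion of length $\omega\cdot\Gamma$; the reverse direction uses the \Godel-Carnap fixed point to produce a self-referential formula whose extension under the iterated truth predicate is the desired solution, matching the paper's construction of $\bar\varphi$ in theorem~\ref{Theorem.ETR_Gamma-iff-Gamma-iterated-truth-predicates}. The only cosmetic difference is that your $\theta$ carries only $x$ free and lets the self-coherence clause~(b) supply the stage restriction implicitly, whereas the paper's $\bar\varphi(x,\alpha)$ carries the stage as an explicit free variable; these are equivalent packagings of the same fixed-point trick.
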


The forward implication of this is straightforward, as the truth predicate itself is defined as the solution to a certain recursion. The real content of the theorem is the converse, that from any sufficiently iterated truth predicate one can extract a solution of a given recursion. For the full \ETR, one must consider recursions of length exceeding $\Ord$, as well as iterated truth predicates of length strictly longer than $\Ord$, for any well-founded class relation.

What we want to prove here is that the equivalence goes through when one restricts the recursions and iterations to length $\Ord$, or indeed, to length $\Gamma$ for any well-ordered infinite class $\Gamma$.

\begin{theorem}\label{Theorem.ETR_Gamma-iff-Gamma-iterated-truth-predicates}
For any class well-order $\Gamma$, with $\omega^\omega\leq\Gamma$, the principle of elementary transfinite recursion $\ETR_\Gamma$ for recursions of length $\Gamma$ is equivalent over \GBC\ to the existence of $\Gamma$-iterated truth predicates, allowing any class parameter in each case.
\end{theorem}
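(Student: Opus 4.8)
The plan is to prove both directions of the equivalence at length $\Gamma$, tracking lengths carefully and following the template of Fujimoto's theorem. For the forward direction, $\ETR_\Gamma\Rightarrow\Gamma$-iterated truth predicates, I would observe that a $\Gamma$-iterated truth predicate is by definition nothing but the solution of a recursion: clauses (a)--(d) of Definition~\ref{Definition.Ord-iterated-truth-first-order} specify the value at a triple $\langle\beta,\varphi,\vec a\rangle$ in terms of values at triples that are either at the same stage $\beta$ with a subformula of $\varphi$, or, through clause (b), at a strictly earlier stage $\alpha<\beta$ with an arbitrary formula. Ranking these triples by stage-then-formula-complexity shows that the underlying well-founded relation has rank $\omega\cdot\Gamma$ rather than $\Gamma$, since each stage already absorbs a full Tarskian recursion of length $\omega$ over formulas of unbounded finite complexity.

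This is exactly where the hypothesis $\omega^\omega\le\Gamma$ does its work: the plan is to show that $\ETR_\Gamma$ already proves $\ETR_{\omega\cdot\Gamma}$. First, $\ETR_\Gamma$ proves $\ETR_{\Gamma\cdot 2}$ by stacking, running a length-$\Gamma$ recursion to obtain a solution, then a second length-$\Gamma$ recursion relative to the first as a class parameter, and gluing the two halves. Second, a short order-arithmetic computation shows that because $\Gamma\ge\omega^\omega$ has infinite leading exponent $\lambda$, so that $1+\lambda=\lambda$, left-multiplication by $\omega$ does not enlarge the leading term, whence $\omega\cdot\Gamma\le\Gamma\cdot 2$. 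Combining this with the monotonicity of $\ETR$ under embeddings of the index well-order gives $\ETR_\Gamma\vdash\ETR_{\omega\cdot\Gamma}$, and the recursion defining the iterated truth predicate then has a solution. For $\Gamma=\Ord$ this is automatic, since $\omega\cdot\Ord=\Ord$; the point of the hypothesis is precisely to rule out short indices such as $\Gamma=\omega$ or $\Gamma=\omega^{k}$, where $\omega\cdot\Gamma$ genuinely overshoots and the forward implication fails.

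For the converse, $\Gamma$-iterated truth predicates $\Rightarrow\ETR_\Gamma$, which is the real content, I would fix a first-order formula $\varphi(x,X,A)$ and solve the length-$\Gamma$ recursion $S_\alpha=\{x\mid\varphi(x,S\restrict\alpha,A)\}$ using a $\Gamma$-iterated truth predicate $\Tr$ for $\mathcal{L}_{\omega,\omega}(\in,\hat\Tr,A)$. As in the translation of Theorem~\ref{Theorem.Truth-Ord-omega-implies-Iterated-truth}, the idea is to build by the \Godel-Carnap fixed-point theorem a single self-referential formula $\theta(x)$ whose intended meaning at stage $\alpha$ is ``$x\in S_\alpha$'': inside $\theta$, each atomic reference ``$\langle\beta,y\rangle\in S\restrict\alpha$'' to the recursion parameter is replaced by the atomic truth assertion $\hat\Tr(\beta,\theta,\langle y\rangle)$. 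The crucial feature is that the coherence clause (b) of the iterated truth predicate automatically enforces $\beta<\alpha$ when $\hat\Tr(\beta,\theta,\cdot)$ is evaluated at stage $\alpha$, so that the class decoded by the lower stages is exactly $\{\langle\beta,y\rangle\mid\beta<\alpha,\ y\in S_\beta\}$. One then sets $S=\{\langle\alpha,x\rangle\mid\Tr(\alpha,\theta,\langle x\rangle)\}$ and verifies by induction along $\Gamma$ that $S_\alpha=\{x\mid\Tr(\alpha,\theta,\langle x\rangle)\}$ satisfies the recursion, each inductive step reducing, through the truth-predicate clauses and clause (b), to the equivalence $\Tr(\alpha,\theta,\langle x\rangle)\iff\varphi(x,S\restrict\alpha,A)$.

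The main obstacle, and the point requiring the most care, is the stage-alignment in the converse: the self-referential translation must be arranged so that every truth look-up used in computing stage $\alpha$ refers only to strictly earlier stages, so that clause (b) applies and the induction closes without any rescaling of the stage index; this is precisely what keeps the simulating iteration at length exactly $\Gamma$ rather than longer, so that the converse in fact needs no hypothesis on $\Gamma$ at all. A secondary technical point is to confirm that $\varphi$ may quantify freely over $V$, which never changes the stage, and refers to the proper-class parameter $S\restrict\alpha$ only through these earlier-stage truth assertions, together with the observation of Lemma~\ref{Lemma.Every-set-definable} that parameters may be suppressed since every set is definable. Finally I would note that the argument relativizes verbatim to any class well-order $\Gamma$ in place of $\Ord$, and that specializing to $\Gamma=\Ord$ yields the implication $\ref{Main.Iterated-truth}\to\ref{Main.ETRord}$ of the main theorem.
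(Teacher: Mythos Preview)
Your proposal is correct and follows essentially the same approach as the paper: the forward direction uses stacking to get $\ETR_{\Gamma+\Gamma}$ from $\ETR_\Gamma$ together with the ordinal-arithmetic fact that $\omega\cdot\Gamma\le\Gamma+\Gamma$ when $\Gamma\ge\omega^\omega$, and the converse uses the \Godel--Carnap fixed-point lemma to produce a self-referential formula $\bar\varphi$ (your $\theta$) whose truth at stage $\alpha$ encodes membership in $S_\alpha$. The paper makes the ordinal computation explicit---writing $\Gamma=\omega^\omega\cdot\Lambda+\alpha$ with $\alpha<\omega^\omega$ and computing $\omega\cdot\Gamma=\Gamma+\omega\cdot\alpha<\Gamma+\Gamma$---whereas your ``infinite leading exponent'' phrasing is a bit compressed; and your invocation of Lemma~\ref{Lemma.Every-set-definable} in the converse is unnecessary, since parameters are carried directly in the valuation slot of $\Tr$.
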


This will establish the equivalence $\ref{Main.Iterated-truth}\iff\ref{Main.ETRord}$ in the main theorem, if we consider the case $\Gamma=\Ord$. The expression $\omega^\omega$ refers to the countable ordinal arising via ordinal exponentiation of $\omega$ with itself.

\begin{proof}
We follow the proof of~\cite[theorem 8]{GitmanHamkins2016:OpenDeterminacyForClassGames}. Let us emphasize that both statements in the theorem make their assertions universally for all class parameters.

The forward implication is basically straightforward, since the iterated truth predicate itself is defined by a transfinite recursion of length $\omega\cdot\Gamma$, by recursion on formulas. Namely, if we have the predicate $\Tr\restrict\alpha$ up to stage $\alpha\in\Gamma$, then we can define $\Tr$ at stage $\alpha$ by a length $\omega$-recursion on formulas, by reference to the partial solution $\Tr\restrict\alpha$. So the entire recursion has length $\omega\cdot\Gamma$. Note that $\ETR_\Gamma$ implies $\ETR_{\Gamma+\Gamma}$, since one need only perform the recursion up to stage $\Gamma$, and then define a new recursion for the rest of the way, using the partial solution as a new parameter. And since $\omega^\omega\leq\Gamma$, it follows that $\omega\cdot\Gamma<\Gamma+\Gamma$ (use that $\Gamma=\omega^\omega\cdot\Lambda+\alpha$ for some $\Lambda$ and some $\alpha<\omega^\omega$, and observe $\omega\cdot\Gamma=\omega\cdot(\omega^\omega\cdot\Lambda+\alpha)=
\omega^\omega\cdot\Lambda+\omega\cdot\alpha=\Gamma+\omega\cdot\alpha<\Gamma+\Gamma$). Thus, we can get the $\Gamma$-iterated truth predicate from $\ETR_\Gamma$, as desired.

Conversely, suppose that for a class parameter $A$, we have an iterated truth predicate $\Tr$ of length $\Gamma$ for first-order truth relative to the parameter $A$. Now suppose that we have an instance of $\ETR_\Gamma$, iterating a formula $\varphi(x,\alpha,A,X)$, where we seek a solution $S$ up to $\Gamma$, a class $S\of\Gamma\times V$ for which $S_\alpha=\set{x\mid \varphi(x,\alpha,A,S\restrict\alpha)}$ for every $\alpha<\Gamma$. We claim that using the truth predicate as a class parameter, we may define such a solution $S$. To do this, we claim first that there is a formula $\bar\varphi$ such that if one extracts from $\Tr$ the class defined by $\bar\varphi$, namely, $S=\set{\<\alpha,x>\mid \Tr(\alpha,\bar\varphi,x)}$, then $S$ is a solution to the recursion of $\varphi$ along $\Gamma$. The formula $\bar\varphi$ should simply be chosen so that $\<V,{\in},A,\Tr\restrict \alpha>\satisfies\bar\varphi(x,\alpha)$ if and only if $\<V,{\in},A,S\restrict \alpha>\satisfies\varphi(x,\alpha)$, where $S$ is defined as just mentioned using $\bar\varphi$. Such a formula $\bar\varphi$ exists by the \Godel-Carnap fixed-point lemma: for any $e$, let $\psi(e,x,\alpha)$ be the assertion $\<V,{\in},A,\set{\<\alpha,x>\mid \Tr(\alpha,e,x)}>\satisfies\varphi(x,\alpha)$, and then by the usual fixed-point trick find a formula $\bar\varphi(x,\alpha)$, for which $\<V,{\in},A,\Tr\restrict \alpha>\satisfies\psi(\bar\varphi,x,\alpha)\iff\bar\varphi(x,\alpha)$. It follows that the class $S$ iteratively defined from $\Tr$ by $\bar\varphi$ satisfies $\varphi$ at each step and therefore is a solution to the recursion of $\varphi$ up to $\Gamma$, as desired.
\end{proof}

An essentially similar idea works with infinitary formulas, provided that $\Ord\cdot\Gamma\leq\Gamma+\Gamma$, which is to say that $\Gamma$ is at least $\Ord^\omega$, with ordinal exponentiation.

\begin{theorem}
 For any infinite class well-order $\Gamma$, of order type at least $\Ord^\omega$, the principle $\ETR_{\Gamma}\bigl(\mathcal{L}_{\Ord,\Ord}(\in,A)\bigr)$ of elementary transfinite recursion for recursions of length $\Gamma$ in the infinitary language with a class parameter $A$ is equivalent over \GBC\ to the existence of $\Gamma$-iterated truth predicates for that language.
\end{theorem}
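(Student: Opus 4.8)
The plan is to run the argument of Theorem~\ref{Theorem.ETR_Gamma-iff-Gamma-iterated-truth-predicates} verbatim but with the infinitary language in place of the first-order one, the only essential change being that the length of the recursion building the truth predicate at a single stage jumps from $\omega$, the height of the first-order formula induction, up to $\Ord$, the supremum of the parse-tree ranks of $\mathcal{L}_{\Ord,\Ord}$ formulas. This is precisely why the arithmetic threshold moves from $\omega^\omega$ to $\Ord^\omega$.

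For the forward direction, I would define the $\Gamma$-iterated truth predicate $\Tr$ for $\mathcal{L}_{\Ord,\Ord}(\in,\hat\Tr,A)$ as the solution of a single elementary transfinite recursion, recursing lexically first on the stage $\beta<\Gamma$ and then, within each stage, on the rank of the parse tree of the formula being evaluated. The point is that each individual step is first-order in the partial solution: the Tarskian clauses for negation and quantifiers are first-order, the infinitary conjunction clause, namely $\Tr(\beta,\bigwedge_{i\in I}\varphi_i,\vec a)$ iff $\Tr(\beta,\varphi_i,\vec a)$ for all $i\in I$, is likewise first-order once the truth values of the conjuncts are available from earlier in the recursion, and the self-referential clause for $\hat\Tr$ consults only strictly earlier stages. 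Thus building $\Tr$ is an ordinary (first-order) instance of $\ETR$ of length $\Ord\cdot\Gamma$. Since $\ETR_\Gamma$ implies $\ETR_{\Gamma+\Gamma}$, and writing $\Gamma=\Ord^\omega\cdot\Lambda+\rho$ with $\rho<\Ord^\omega$ one computes $\Ord\cdot\Gamma=\Ord^\omega\cdot\Lambda+\Ord\cdot\rho<\Ord^\omega\cdot(\Lambda+1)\leq\Gamma+\Gamma$ (using $1+\omega=\omega$, so $\Ord\cdot(\Ord^\omega\cdot\Lambda)=\Ord^\omega\cdot\Lambda$, together with $\Ord\cdot\rho<\Ord^\omega$), the hypothesis $\Ord^\omega\leq\Gamma$ guarantees that $\ETR_\Gamma(\mathcal{L}_{\Ord,\Ord})$, which in particular yields first-order $\ETR_\Gamma$, suffices to run this recursion and produce $\Tr$.

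For the converse, I would mimic the fixed-point extraction of Theorem~\ref{Theorem.ETR_Gamma-iff-Gamma-iterated-truth-predicates}. Given an instance of $\ETR_\Gamma(\mathcal{L}_{\Ord,\Ord})$ iterating an infinitary step formula $\varphi(x,\alpha,S,A)$, I would seek an infinitary formula $\bar\varphi\in\mathcal{L}_{\Ord,\Ord}(\in,\hat\Tr,A)$ for which the class $S=\set{\<\alpha,x>\mid\Tr(\alpha,\bar\varphi,x)}$ read off from the iterated truth predicate is a solution. As before, $\bar\varphi$ should be chosen so that evaluating it at stage $\alpha$ against $\Tr\restrict\alpha$ reconstructs $S\restrict\alpha$, by replacing each atomic occurrence $\<\xi,x>\in S$ of the solution predicate in $\varphi$ with the clause $\hat\Tr(\xi,\bar\varphi,x)$ for $\xi<\alpha$, and then asserts $\varphi$; the required self-referential formula is produced by an infinitary analogue of the \Godel--Carnap fixed-point lemma.

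The main obstacle, and the one place where the infinitary setting genuinely differs, is justifying this fixed-point lemma for $\mathcal{L}_{\Ord,\Ord}$ and checking that the fixed point stays inside the language. Here I would use that every formula of $\mathcal{L}_{\Ord,\Ord}(\in,\hat\Tr,A)$ is a set, since its parse tree is well-founded of set rank below $\Ord$ and its conjunctions and quantifier blocks are set-indexed, so that formula codes may be substituted by a definable set operation and diagonalized in the usual way. To avoid using a code as a genuine parameter I would invoke Lemma~\ref{Lemma.Every-set-definable} exactly as in the proof of Theorem~\ref{Theorem.Truth-Ord-omega-implies-Iterated-truth}, expressing ``$y=\bar\varphi$'' by a parameter-free definition, and I would verify that the substitution forming $\bar\varphi$ from $\varphi$ raises the parse-tree rank only boundedly, keeping it below $\Ord$ and hence inside $\mathcal{L}_{\Ord,\Ord}$. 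Granting the fixed point, the verification that $S$ solves the recursion is then identical in form to the first-order case, unwrapping the meaning of $\Tr(\alpha,\bar\varphi,x)$ through the defining clauses of the iterated truth predicate.
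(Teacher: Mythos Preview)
Your proposal is correct and follows essentially the same approach as the paper, which is quite terse here, merely stating that the forward direction holds because the iterated truth predicate is the solution of a recursion and that the converse is proved ``as in the previous theorem.'' You have appropriately expanded on both directions: the ordinal arithmetic showing $\Ord\cdot\Gamma\leq\Gamma+\Gamma$ under the hypothesis $\Gamma\geq\Ord^\omega$ is exactly the point the paper flags in the sentence preceding the theorem, and your identification of the infinitary fixed-point lemma as the one place requiring care in the converse is apt, though the paper itself does not comment on it.
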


\begin{proof}
The forward implication is proved by observing that the iterated truth predicate is precisely a solution of a recursion. The converse implication is proved as in the previous theorem by defining a solution of the recursion by reference to the iterated truth predicate.
\end{proof}

\section{Class-join separation}\label{Section.Class-join-separation}

In this section, we shall prove the implications $\ref{Main.ETRord}\to\ref{Main.Class-join-separation}\to\ref{Main.Truth-Ord-omega}$ in the main theorem. To begin, we define the principle of \emph{$\ETRord$-foundation} to be the assertion that every instance of elementary transfinite recursion of length $\Ord$ either has a solution, or else fails at some least stage $\alpha\leq\Ord$. That is, either there is a solution or there is $\alpha\leq\Ord$ such that for every $\beta<\alpha$ there is a solution of the recursion of length $\beta$, but there is no solution of length $\alpha$. Since the partial solutions are unique when they exist, perhaps some readers expect that one could simply combine those earlier solutions into one uniform solution; but such an argument would appeal to a class-replacement principle that is not provable in \GBC. (Consider the difficulty, for example, of combining the various $\Sigma_n$-truth predicates into a uniform truth predicate, if the model is $\omega$-standard and has only definable classes.) Meanwhile, the principle of $\ETRord$-foundation is a consequence of the principle of \emph{$\Pi^1_1$-foundation}, which asserts that every $\Pi^1_1$-definable class of ordinals has a least element, and both of these principles are true in any well-founded model of \GBC. That is, in transitive models, we get it for free.

Let us now define a separation-like principle that we call the \emph{class-join separation principle}. This is the assertion that for any class $\Phi$ of $\mathcal{L}_{\Ord,\omega}(\in,A)$-formulas in finitely many free variables, where $A$ is any class parameter, if every formula $\varphi\in\Phi$ admits a truth predicate $T_\varphi$, then $\set{\vec a\mid \exists\varphi\in\Phi\ T_\varphi(\varphi,\vec a)}$ exists as a class. 
(By a truth predicate $T_\varphi$, we mean a class that satisfies the conditions in Definition \ref{Definition.Truth-predicate-first-order}, but only for subformulas of $\varphi$.) 
The idea is that this class is essentially what we would want to mean by the class $\set{\vec a\mid \bigvee_{\varphi\in\Phi}\varphi(\vec a)}$, defined by a class-sized join. The principle asserts that such class-sized joins can be used to define classes, even when one lacks a uniform truth predicate and only has truth predicates for each formula $\varphi$ individually. Indeed, if there is a uniform truth predicate application to all $\varphi\in\Phi$, then the instance of the class-join separation principle would follow from the ordinary separation axiom of \GBC\ simply by using that predicate as a parameter. The principle, instead, is about unifying a class-indexed collection of separate truth predicates.

The dual principle, the \emph{class-meet separation principle}, asserts of every $\Phi$ as above that $\set{\vec a\mid \forall\varphi\in\Phi\ T_\varphi(\varphi,\vec a)}$ exists. In other words, the principle allows us to use the class-sized conjuction $\bigwedge_{\varphi\in\Phi}\varphi(\vec x)$ to define a class, provided that we have truth predicates for each individual $\varphi\in\Phi$.

It will be convenient to have the following folklore lemma, showing that every set is definable by a suitable infinitary formula.

\begin{lemma}\label{Lemma.Every-set-definable}
 For every set $a$, there is a formula $\theta_a(x)$ in the infinitary language $\mathcal{L}_{\Ord,\omega}(\in)$ that defines $a$. More specifically, there is a truth predicate for the class of these formulas $\theta_a$ and their subformulas, and with respect to this truth predicate, $\theta_a(x)$ is true only when $x=a$.
\end{lemma}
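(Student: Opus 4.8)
The plan is to define the formulas $\theta_a$ by $\in$-recursion and then to exhibit an explicit truth predicate for them. Specifically, I would set
$$\theta_a(x)\quad:=\quad \forall y\,\Bigl(y\in x\ \leftrightarrow\ \bigvee_{b\in a}\theta_b(y)\Bigr),$$
a definition by recursion on the set $a$ (the disjuncts $\theta_b$ for $b\in a$ have strictly smaller rank, so this is a legitimate $\in$-recursion, available already in the $\ZFC$ fragment of \GBC). Each $\theta_a$ uses a single quantifier in every quantifier block and conjunctions or disjunctions of size $|a|<\Ord$, so $\theta_a\in\mathcal{L}_{\Ord,\omega}(\in)$ as required by definition~\ref{Definition.Infinitary languages}; note that we genuinely use the finite-quantifier allowance of the $\omega$ subscript, one quantifier per level of the parse tree, the nesting depth being the rank of $a$. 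The assignment $a\mapsto\theta_a$ is a first-order definable injective class function (distinct sets yield distinct formulas, by extensionality and induction), and hence the decoding relation ``$\psi=\theta_a$'' is likewise first-order definable, since from a given $a$ one simply computes $\theta_a$ and compares.

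Next I would define the truth predicate directly, rather than obtaining it as the solution of a recursion. Let $\Phi_0$ be the class of all subformulas of the formulas $\theta_a$; concretely these are the atomic formulas $y\in x$, the formulas $\theta_a(x)$ themselves, the disjunctions $\bigvee_{b\in a}\theta_b(y)$, and the finitely many Boolean combinations comprising the matrix $y\in x\leftrightarrow\bigvee_{b\in a}\theta_b(y)$. The key point is that on $\Phi_0$ the intended truth value of each formula under a valuation collapses to a \emph{set-theoretic} condition. Using the definable decoding, I declare $\langle\theta_a(x),\langle c\rangle\rangle$ true exactly when $c=a$, I declare $\langle\bigvee_{b\in a}\theta_b(y),\langle c\rangle\rangle$ true exactly when $c\in a$, I declare the atomic $y\in x$ true under $\langle c,d\rangle$ exactly when $c\in d$, and I declare each Boolean-combination subformula true by the corresponding Boolean condition on these. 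Because every clause is expressed by a single first-order formula (invoking the definable map $a\mapsto\theta_a$ to recover the relevant $a$ from a given formula), the resulting relation $\T$ is first-order definable and therefore exists as a class of \GBC\ by class comprehension, with no appeal to $\ETRord$ and no general $\mathcal{L}_{\Ord,\omega}$ truth predicate.

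Finally I would verify, by $\in$-induction on $a$, that this $\T$ satisfies the recursive requirements of definition~\ref{Definition.Truth-predicate-infinitary} on $\Phi_0$ and that $\theta_a(x)$ is true only when $x=a$. Assuming $\theta_b(y)$ is true iff $y=b$ for each $b\in a$, the disjunction $\bigvee_{b\in a}\theta_b(y)$ is true iff $y\in a$, the matrix is true under $x\mapsto d$ iff $(c\in d\leftrightarrow c\in a)$ for the quantified $c$, and hence $\theta_a(x)$ is true under $x\mapsto d$ iff $(\forall c)(c\in d\leftrightarrow c\in a)$, which by extensionality holds iff $d=a$; this closes the induction and simultaneously confirms the atomic, Boolean, and quantifier clauses. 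The main obstacle, and really the only subtle point, is precisely this definability-in-plain-\GBC\ issue: although truth for $\mathcal{L}_{\Ord,\omega}(\in)$ in general is one of the strong principles of the main theorem, the truth predicate for this very special sub-collection is harmless because each $\theta_a$ is semantically equivalent to the single atomic assertion ``$x=a$'', so that the whole semantic content reduces, via the definable decoding, to first-order set-theoretic conditions on the valuation and requires no class recursion whatsoever.
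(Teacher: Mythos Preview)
Your proposal is correct and follows essentially the same approach as the paper: define $\theta_a$ by $\in$-recursion and then exploit the intended semantics ($\theta_a(x)\leftrightarrow x=a$) to write down the truth predicate directly as a first-order definable class, rather than obtaining it by a class recursion. Your version is in fact more explicit than the paper's, which simply remarks that ``since we know that we want $\theta_a(x)$ to define $x=a$, we can use that to build the truth predicate'' and leaves the inductive verification to the reader.
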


In particular, in any transitive model of set theory, $\theta_a$ defines $a$, meaning that $M\satisfies\theta_a[b]$ just in case $b=a$.

\begin{proof}
 We define the formulas $\theta_a$ by the following $\in$-recursion:
 $$\theta_a(x)\quad=\quad\forall z\bigl[z\in x \iff\bigvee_{u\in a} \theta_u(z)\bigr].$$ The formula asserts that the elements of $x$ are precisely the objects satisfying the definition of an element of $a$. In \GBC\ we can define a truth predicate for the class of $\theta_a(x)$ and their subformulas simply by extending the class $\{\<\theta_a(x),a>\mid a\in V\}$ in the natural way to the subformulas of the $\theta_a$, which are these, joins of these and the formulas $z\in x\iff\bigvee_{u\in a}\theta_u(z)$. Basically, since we know that we want $\theta_a(x)$ to define $x=a$, we can use that to build the truth predicate. One can verify inductively that this is indeed a truth predicate for these formulas.
\end{proof}

Subsequently, we shall write simply $``x=a"$ for the formula $\theta_a(x)$, with the understanding that $a$ is not appearing here as a parameter, but rather the hereditary $\in$-structure of $a$ appears essentially in the parse tree of the formula itself.

\begin{theorem}
  The following are equivalent over \GBC:
  \begin{enumerate}[\rm(i)]
    \item The principle $\ETRord$.
    \item For every class $A$, there is a truth predicate for $\mathcal{L}_{\Ord,\omega}(\in,A)$.
    \item The class-join separation principle plus $\ETRord$-foundation.
    \item The class-meet separation principle plus $\ETRord$-foundation.
  \end{enumerate}
\end{theorem}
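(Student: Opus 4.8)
The plan is to derive the full four-way equivalence by bolting the two separation principles onto the already-available equivalence of (i) and (ii). For (i) $\iff$ (ii), the forward direction is immediate from Theorem~\ref{Theorem.ETRord-implies-Truth-Ord-Ord}, since a truth predicate for $\mathcal{L}_{\Ord,\Ord}(\in,A)$ restricts to one for $\mathcal{L}_{\Ord,\omega}(\in,A)$, and the reverse direction runs through Theorem~\ref{Theorem.Truth-Ord-omega-implies-Iterated-truth} to produce an $\Ord$-iterated truth predicate and then Theorem~\ref{Theorem.ETR_Gamma-iff-Gamma-iterated-truth-predicates} with $\Gamma=\Ord$ to recover $\ETRord$. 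It then suffices to prove $\mathrm{(i)}\to\mathrm{(iii)}$, the harder $\mathrm{(iii)}\to\mathrm{(ii)}$, and the formal duality $\mathrm{(iii)}\iff\mathrm{(iv)}$.

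For $\mathrm{(i)}\to\mathrm{(iii)}$: assuming $\ETRord$, every length-$\Ord$ recursion has a solution, so $\ETRord$-foundation holds trivially, its first disjunct always obtaining. For class-join separation, Theorem~\ref{Theorem.ETRord-implies-Truth-Ord-Ord} furnishes a single uniform truth predicate $\T$ for $\mathcal{L}_{\Ord,\omega}(\in,A)$. Because truth predicates are unique where they exist, there being no minimal-rank parse tree of disagreement, each individual $T_\varphi$ must agree with $\T$ on $\varphi$, so $\set{\vec a\mid\exists\varphi\in\Phi\ T_\varphi(\varphi,\vec a)}=\set{\vec a\mid\exists\varphi\in\Phi\ \T(\varphi,\vec a)}$, and the right-hand class exists by ordinary \GBC-separation using $\T$ and $\Phi$ as parameters.

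The heart of the matter is $\mathrm{(iii)}\to\mathrm{(ii)}$. I would set up the construction of a truth predicate for $\mathcal{L}_{\Ord,\omega}(\in,A)$ as the solution of the Tarskian recursion organized along parse-tree rank, a recursion of length $\Ord$. By $\ETRord$-foundation this recursion either has a solution, which is exactly the desired truth predicate, or fails at a least stage $\alpha\leq\Ord$; I will rule out the latter. A successor failure stage $\gamma+1$ is impossible, since a length-$\gamma$ solution $S$ extends one rank by a single application of the recursion clause $S_\gamma=\set{x\mid\varphi(x,S\restrict\gamma,A)}$, a class by \GBC-separation. For a limit stage (including $\alpha=\Ord$) I invoke class-join separation through a tagging device: for each formula $\varphi$ of rank below $\alpha$, form $\hat\varphi(\vec x,y)=\varphi(\vec x)\wedge\theta_\varphi(y)$, where $\theta_\varphi$ is the parameter-free infinitary formula defining the set $\varphi$ supplied by Lemma~\ref{Lemma.Every-set-definable}. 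Crucially, each $\hat\varphi$ admits a truth predicate: the first conjunct does because $\varphi$ is covered by some partial solution $S_\beta$ with $\rank(\varphi)<\beta<\alpha$, and the second does by Lemma~\ref{Lemma.Every-set-definable}, regardless of how large the rank of $\theta_\varphi$ may be. Applying class-join separation to $\set{\hat\varphi\mid\rank(\varphi)<\alpha}$ then yields the class $\set{\<\vec a,\varphi>\mid\rank(\varphi)<\alpha,\ T_\varphi(\varphi,\vec a)}$, which by uniqueness of the coherent $T_\varphi$ is precisely a solution of the recursion up to $\alpha$, contradicting the choice of $\alpha$. Hence no failure stage exists and the truth predicate exists.

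Finally, $\mathrm{(iii)}\iff\mathrm{(iv)}$ is a formal duality. Since $\mathcal{L}_{\Ord,\omega}(\in,A)$ is closed under negation and $T_{\neg\varphi}(\neg\varphi,\vec a)\iff\neg T_\varphi(\varphi,\vec a)$, a class-meet over $\Phi$ is the complement of a class-join over $\set{\neg\varphi\mid\varphi\in\Phi}$ and conversely, with \GBC\ supplying complements; the $\ETRord$-foundation clause is shared. The step I expect to be the main obstacle is $\mathrm{(iii)}\to\mathrm{(ii)}$, and within it the tagging construction: one must recover the formula index that a class-join otherwise collapses, and the delicate point that makes the argument go through is that $\hat\varphi$ inherits a truth predicate from its two conjuncts without ever requiring $\theta_\varphi$ itself to have rank below $\alpha$.
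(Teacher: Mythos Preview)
Your proposal is correct and takes essentially the same approach as the paper: the equivalence $\mathrm{(i)}\iff\mathrm{(ii)}$ is imported from earlier results, $\ETRord$-foundation and the separation principle follow easily from the uniform truth predicate, the duality $\mathrm{(iii)}\iff\mathrm{(iv)}$ is de~Morgan, and the key step $\mathrm{(iii)}\to\mathrm{(ii)}$ proceeds exactly as you describe---organize the Tarskian recursion by parse-tree rank, use $\ETRord$-foundation to isolate a least failure stage, rule out successors, and at a limit apply class-join separation to the tagged formulas $\varphi(\vec x)\wedge\theta_\varphi(y)$, whose individual truth predicates come from combining the rank-$\beta$ partial solutions with Lemma~\ref{Lemma.Every-set-definable}. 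Your observation that $\theta_\varphi$ need not itself have rank below $\alpha$ is precisely the point the paper exploits.
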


\begin{proof}
(i)$\iff$(ii) This was established by the results of sections~\ref{Section.ETRord-implies-truth-for-L_Ord,Ord} and~\ref{Section.Ord-iterated-truth-implies-ETRord}.

(ii)$\to$(iii) The principle of $\ETRord$-foundation is an immediate consequence of $\ETRord$. And once we have a uniform $\mathcal{L}_{\Ord,\omega}(\in,A)$-truth predicate $\T$, any instance of class-meet separation with predicate $A$ reduces to an ordinary instance of separation relative to the class predicate $\T$. 

(iii)$\iff$(iv) This follows easily from the de Morgan law.

(iv)$\to$(ii) Assume the class-join separation principle plus $\ETRord$-foundation. Consider the recursive definition of a truth predicate for $\mathcal{L}_{\Ord,\omega}(\in,A)$. This is an $\Ord$-recursion, defined by recursion on the rank of the parse trees of the formulas. If the recursion has a solution, then we have the desired truth predicate. If it does not, then by $\ETRord$-foundation, the recursion fails at some least rank $\alpha\leq\Ord$. So for every $\beta<\alpha$, we have a uniform truth predicate $\T_\beta$ for the class $\mathcal{L}^\beta$ consisting of formulas whose parse tree has rank less than $\beta$, but there is no such uniform truth predicate covering all formulas of rank less than $\alpha$. Since we can easily extend a truth predicate for formulas of rank $\beta$ to rank $\beta+1$, it follows that $\alpha$ must be a limit ordinal or $\Ord$ itself, and so the class of formulas of rank less than $\alpha$ is the union of those of rank less than some $\beta<\alpha$. Let $\Phi$ be the class of formulas in $\mathcal{L}^\alpha$ with free variable $x$. Using the class-join separation principle, we may define a predicate $\T_\alpha$ as follows:
 $$\T_\alpha\quad=\quad\set{(y,z)\mid \bigvee_{\psi\in\Phi}\bigl(``y=\psi"\wedge\psi(z)\bigr)}.$$
What we mean by the join is the assertion $\exists\psi\in\Phi\, \T_\psi\bigl(``y=\psi"\wedge\psi(z)\bigr)$, where $``y=\psi"$ is the formula $\theta_\psi(y)$ of lemma~\ref{Lemma.Every-set-definable} and where $\T_\psi$ is a truth predicate for the conjunction $``y=\psi"\wedge\psi(z)$. Indeed we have such a truth predicate, because we can simply combine the truth predicate of $\psi$ arising from the fact that it has rank less than $\alpha$ with the truth predicates provided by lemma~\ref{Lemma.Every-set-definable}.

Thus, we have included $(\varphi,a)$ in $\T_\alpha$ just in case $\varphi$ is some formula $\psi$ of rank less than $\alpha$ for which $\psi(z)$ is true. It is now easy to verify that this is indeed a uniform truth predicate for all formulas of rank less than $\alpha$, contrary to the assumption that there was no such solution at stage $\alpha$.
\end{proof}

At bottom, the argument is that $\ETRord$-foundation tells you that if the recursion fails, then it does so at a particular stage, and class-join separation allows you to unify the earlier-stage truth predicates into a uniform predicate, showing that that stage was not a failure after all. So the recursion must succeed.

\section{Clopen determinacy for games of rank at most $\Ord+1$}

We shall now prove the implications $\ref{Main.ETRord}\to\ref{Main.Clopen-determinacy}\to\ref{Main.Truth-Ord-omega}$, using arguments that amount to a refinement of corresponding results of Gitman and Hamkins in~\cite{GitmanHamkins2016:OpenDeterminacyForClassGames}.

A clopen class game is played on a well-founded class tree $T\of V^{<\omega}$, whose terminal nodes are labeled as a win for one or the other player. The game starts at the root node, which we place at the top, so the play proceeds downward. The players take turns, each subsequently selecting a child node of the current node and then continuing in turn from this node. The game ends when a terminal node is reached---by well-foundedness this must happen at some finite stage---and the winner is determined by the label of that node.

A well-founded class tree $T$ admits a ranking function with a well-ordered class relation $\Gamma$, if there is a labeling of the nodes of the tree with elements of that relation, in such a way that every child node has lower rank than its parent. A tree has rank at most $\Ord+1$, therefore, if we can label the root of the tree with $\Ord$ and all other nodes of the tree with ordinals, in such a way that these ordinals descend as one moves down in the tree. A ranking is \emph{continuous}, if it obeys the recursive property that the rank of any node $p$ is exactly the supremum of $\rank(q)+1$ in $\Gamma$ for every child $q$ of $p$. The principle $\ETR_\Gamma$ implies that every tree with a $\Gamma$ ranking function has a continuous $\Gamma$ ranking function, but it isn't clear whether one can prove this without an appeal to some fragment of \ETR.

Meanwhile, we claim that the question of whether or not a well-founded class tree $T$ has rank at most $\Ord+1$ is actually a first-order-expressible property of the tree, and furthermore in \GBC\ such trees always admit first-order-definable continuous ranking functions, without requiring any appeal to a fragment of \ETR. To see this, notice that if we consider $T\intersect V_\theta$, which is a well-founded set-sized tree, then it has an continuous ranking function, since \ZFC\ proves that every well-founded set relation has a continuous ranking function. As $\theta$ increases, the rank of any fixed node in $T\intersect V_\theta$ never decreases. If every non-root node in $T$ has the property that its ranks in these approximation trees $T\intersect V_\theta$ eventually stabilize for large enough $\theta$, then in fact those limit values form an acceptable continuous ranking of the whole tree $T$, if we should place label $\Ord$ or a suitable ordinal on the root node. And conversely, if we are able to rank the whole tree, then those ranks also serve as ranks in the trees $T\intersect V_\theta$. So a tree has rank at most $\Ord+1$ if and only if every non-root node has an eventually stabilizing ordinal rank in $T\intersect V_\theta$, which is a first-order property about the tree. Since the assignment to the nodes of the corresponding limit rank value is first-order definable, we thereby achieve a definable continuous ranking of such trees, as we claimed.

The existence of rankings in a game tree is connected with the requirement in some games that a particular player must count down in a well-order during play. This counting-down feature in a game is often a convenient way to ensure that the game is clopen, since the player cannot count down forever, and the outcome of the game is known when the clock runs out. If a game tree has one player counting down in $\Gamma$ during play, then we can rank the tree with elements of $2\cdot\Gamma+1$, since we label the root node with $2\cdot\Gamma$, and then whenever the count-down player has just announced $\alpha$, we label that node with $2\cdot\alpha$, and if it is the other players turn, we label with $2\cdot\alpha+1$. Thus, using the fact that $2\cdot\Ord=\Ord$, a game where one of the players must count down in the ordinals has rank at most $\Ord+1$.

\begin{definition}\rm
 The principle of \emph{determinacy} for clopen class games of rank at most $\Gamma$, a class well-order, is the assertion that for every clopen class game with a game tree of rank at most $\Gamma$, one of the players has a winning strategy.
\end{definition}

We emphasize that we are referring here to the ordinal rank of the game tree, which is not the same as the game values that would arise in the open determinacy analysis of the game. For example, the game tree can have a very high rank, even if the first player has a winning move on the first move, which would make the game value very low. Meanwhile, the game value for a clopen game, when it exists (it is defined by a class recursion and in \GBC\ there needn't be a solution of that recursion) is bounded by the rank of the tree.

A winning strategy for a player is a function on the game tree, which selects of every parent node a child of it, in such a way that every play of the game that conforms with those choices on that player's moves, leads to a terminal node that is a win for that player.

\begin{theorem}\label{Theorem.ETRord-implies-clopen-determinacy}
  The principle of elementary transfinite recursion $\ETRord$ for $\Ord$-length recursions implies the principle of determinacy for clopen class games of rank at most $\Ord+1$. More generally, for any class well-order $\Gamma$, the principle $\ETR_\Gamma$ for recursions of length $\Gamma$ implies determinacy for clopen class games of rank at most $\Gamma+1$.
\end{theorem}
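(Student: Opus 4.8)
The plan is to run the classical backward-induction analysis of a clopen game as a single instance of $\ETR_\Gamma$, computing the class of positions from which a fixed player, say player~I, has a winning strategy, and then to read a winning strategy off this class. Fix a clopen class game on a well-founded tree $T\of V^{<\omega}$ of rank at most $\Gamma+1$, and let $A$ be the class parameter encoding $T$, whose turn it is at each node, and the win/loss labels of the terminal nodes. By hypothesis $T$ carries a ranking function $\rho$ into $\Gamma+1$, sending the root to the top point $\Gamma$ and every other node to an element of $\Gamma$ strictly below the rank of its parent; in the principal case $\Gamma=\Ord$ we may take $\rho$ to be the canonical first-order-definable continuous ranking discussed above, so that no extra recursion-theoretic assumption is needed to obtain it.

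First I would set up the recursion. Let $\varphi(p,S,A)$ assert that $p$ is a node of $T$ and either (i)~$p$ is terminal and labeled a win for~I; or (ii)~$p$ is non-terminal, it is I's turn at $p$, and some child $q$ of $p$ has $\langle\rho(q),q\rangle\in S$; or (iii)~$p$ is non-terminal, it is II's turn at $p$, and every child $q$ of $p$ has $\langle\rho(q),q\rangle\in S$. Since every child of a node of rank $\alpha$ has rank strictly below $\alpha$, the clause deciding a node of rank $\alpha$ refers only to $S\restrict\alpha$, so this is a genuine elementary transfinite recursion of length $\Gamma$ with class parameter $A$. Applying $\ETR_\Gamma$ yields a solution $S\of\Gamma\times V$, and one further first-order step—applying clause (ii) or (iii) at the root, whose children all carry ordinal ranks already decided by $S$—determines the status of the root. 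Let $W=\{p\in T\mid p\text{ is declared a win for I by }S\}$ be the resulting winning region.

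Next I would extract a strategy, and this is the one place where global choice is genuinely used. Using the global well-ordering of $V$ provided by \GBC, define a strategy that, at any node $p$ in the winning player's region where it is that player's turn, moves to the $<$-least child still in that region; such a child exists by the defining clause of $W$. If the root lies in $W$ this yields a strategy $\tau$ for~I (always moving to the least child in $W$); otherwise it yields a strategy for~II (always moving to the least child outside $W$).

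The crux is to verify that $\tau$ is winning, and here the rank plays the essential role. I would argue by induction on $\rho(p)$ that every play conforming to $\tau$ and beginning at a node $p\in W$ stays within $W$: at a node of $W$ where it is I's turn, $\tau$ moves to a child in $W$ by construction, while at a node of $W$ where it is II's turn clause (iii) guarantees that \emph{every} child already lies in $W$, so II cannot escape. Because $\rho$ strictly decreases along the play and $\Gamma+1$ is well-ordered, the play reaches a terminal node after finitely many moves, and that node, lying in $W$, is by clause (i) labeled a win for~I. The symmetric argument, using clause (ii) and the complement of $W$, handles the case where the root is outside $W$. Thus one of the players has a winning strategy, and the principal case $\Gamma=\Ord$ gives determinacy for clopen class games of rank at most $\Ord+1$. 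The main obstacle is the bookkeeping that organizes the position-value recursion so that its length is exactly $\Gamma$—this is precisely where the rank-at-most-$\Gamma+1$ hypothesis is consumed—together with the descending-rank verification that the definable winning region does in fact produce a winning strategy.
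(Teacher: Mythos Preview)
Your proof is correct and follows essentially the same approach as the paper: both run Zermelo's back-propagation along the rank function as a single $\ETR_\Gamma$ recursion, labeling each node according to whether the player to move can reach a child already labeled in their favor, and then extract a strategy by staying on one's own labels. Your treatment is more explicit about the formula defining the recursion, about invoking the global well-order to select a child, and about the descending-rank verification that the strategy wins, but the underlying argument is the same.
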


This will establish implication $\ref{Main.ETRord}\to\ref{Main.Clopen-determinacy}$ in the main theorem.

\begin{proof}
Using the back-propagation method, due originally to Zermelo in his proof of the fundamental theorem of finite games, we shall label every node in the game tree $T$ as a win either for player I or for player II, and these designations will provide a winning strategy for whoever gets their label on the root node, the strategy being: stay on the nodes with your label. To begin, assume $\ETR_\Gamma$ for a class well-order $\Gamma$, and suppose $T$ is a well-founded game tree $T$ of rank at most $\Gamma+1$. Thus, the root node in the tree gets rank $\Gamma$, but all other nodes will have a rank below $\Gamma$. Using $\ETR_\Gamma$, we apply the back-propagation method to label the nodes of the tree with player I or player II, by recursion on rank. The terminal nodes, with rank $0$, are already labeled for us. If all children of a node $t$ are labeled, then if it is player I's turn to play and there is a child node labeled I, then we place label I on $t$, and otherwise II; similarly, if it is player II's turn to play, and there is a child node labeled II, then we place label II on $t$, and otherwise I. By the principle $\ETR_\Gamma$, this labels all the non-root nodes of the tree. We may now place the corresponding label on the root, labeled following the same back-propagation rule. From this labeling, we can get a winning strategy: whoever has their label on the root node can always stay on their own labels, and thereby win the game.
\end{proof}

Next, we establish the implication $\ref{Main.Clopen-determinacy}\to\ref{Main.Truth-Ord-omega}$ in the main theorem.

\begin{theorem}\label{Theorem.Clopen-determinacy-implies-truth}
 The principle of determinacy for clopen class games of rank at most $\Ord+1$ implies the existence of a truth predicate for $\mathcal{L}_{\Ord,\omega}(\in,A)$ for any class parameter $A$.
\end{theorem}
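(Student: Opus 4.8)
The plan is to adapt the Gitman--Hamkins truth-telling game to the infinitary language $\mathcal{L}_{\Ord,\omega}(\in,A)$ and to extract the truth predicate from a winning strategy. I would set up a class game between the \emph{truth-teller} (player~I) and the \emph{interrogator} (player~II). Player~II opens by naming a formula $\varphi\in\mathcal{L}_{\Ord,\omega}(\in,A)$ together with a valuation $\vec a$ of its finitely many free variables, and player~I must declare $\varphi[\vec a]$ true or false. Player~II then challenges by descending the parse tree in the manner of a semantic game: against a declared-true conjunction he selects a conjunct and demands its declared truth, against a declared-false conjunction player~I must supply a false conjunct, negation swaps the burden, and for a universal quantifier the defender of the corresponding existential supplies a set witness while the challenger instantiates the universal. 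At an atomic formula the standing declaration is checked against reality, using $A$ to decide $\hat A$-assertions. Player~II wins if player~I is ever caught declaring an atomic formula incorrectly or violating one of these local rules, and player~I wins otherwise. Since each challenge strictly decreases the ordinal rank of the parse tree of the current subformula, every play is finite, so the game is clopen; and because the opening move fixes a formula of parse rank below $\Ord$, the root has rank $\Ord$ and the whole tree has rank at most $\Ord+1$.

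Next I would show that the interrogator has no winning strategy, so that clopen determinacy of rank at most $\Ord+1$ must hand the win to the truth-teller. Fixing a putative interrogator strategy $\tau$, I would use Replacement to find an ordinal $\theta$ for which $V_\theta$ contains the opening data and is closed under $\tau$ and under $A$, in the sense that $\tau$ sends positions built from $V_\theta$ to moves in $V_\theta$ and $A\cap V_\theta$ correctly decides $\hat A$-membership for elements of $V_\theta$; there is a club of such $\theta$. I then let player~I play honestly according to genuine satisfaction in the set structure $\langle V_\theta,\in,A\cap V_\theta\rangle$, whose satisfaction relation for the set-sized formula $\varphi$ and its subformulas is definable by the Tarski recursion because the structure is a set. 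The point is that the entire finite play is then trapped inside $V_\theta$: closure under $\tau$ keeps the interrogator's moves and witnesses in $V_\theta$, while player~I can always locate her own required witnesses inside $V_\theta$, since $\langle V_\theta,\in,A\cap V_\theta\rangle$ already verifies the relevant existentials. Consequently player~I is atomically correct and internally consistent throughout, is never caught, and defeats $\tau$. By clopen determinacy for games of rank at most $\Ord+1$, the truth-teller therefore has a winning strategy $\sigma$.

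Finally I would read the truth predicate off $\sigma$ and verify the recursion. Define $T(\varphi,\vec a)$ to hold exactly when $\sigma$ declares $\varphi[\vec a]$ true in response to II opening with $(\varphi,\vec a)$; this is a genuine class, being first-order definable from the class $\sigma$ and the parameter $A$. To see that $T$ obeys the clauses of Definition~\ref{Definition.Truth-predicate-infinitary}, I would observe that for each subformula $\psi$ and valuation $\vec b$ the associated semantic game has parse rank below $\Ord$ and hence is itself a determined clopen class game. Letting $\mathrm{Val}(\psi,\vec b)$ denote its winner, the key coherence fact is that, because $\sigma$ wins the big game and the allowed challenges on a current formula are precisely the moves of its semantic game, $\sigma$'s standing declaration for $\psi[\vec b]$ at any position is a winning side of that semantic game and therefore equals $\mathrm{Val}(\psi,\vec b)$, independently of the history. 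In particular $T(\psi,\vec b)=\mathrm{Val}(\psi,\vec b)$, and $\mathrm{Val}$ satisfies the atomic, Boolean, and quantifier clauses by the usual composition of semantic games. Thus $T$ is a truth predicate for $\mathcal{L}_{\Ord,\omega}(\in,A)$.

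I expect the main obstacle to be the argument that the interrogator has no winning strategy, since one must manufacture surviving honest play without yet having any truth predicate for the proper-class infinitary structure $\langle V,\in,A\rangle$. The resolution above confines the whole finite play to a reflecting level $V_\theta$ closed under $\tau$ and $A$, where satisfaction of the set-sized formula $\varphi$ is Tarski-definable, thereby avoiding any appeal to reflection of infinitary truth between $V_\theta$ and $V$; the delicate checks are that closure under $\tau$ genuinely keeps all of II's witnesses inside $V_\theta$ and that player~I's own witnessing obligations are always met within $V_\theta$. A secondary but essential point is that determinacy of the single big game, rather than of the individual semantic games, is what guarantees that the resulting $T$ exists uniformly as a class and not merely as a formula-by-formula collection of verdicts.
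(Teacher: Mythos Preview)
Your proposal is correct and takes a genuinely different, though closely related, route from the paper. The paper follows the Gitman--Hamkins truth-telling game more literally: the interrogator poses arbitrary finite \emph{sets} of formulas while counting down an ordinal clock, the truth-teller declares truth values and supplies existential witnesses, and loses upon any explicit violation of the Tarski recursion; the rank bound is secured by the clock rather than by the formula structure, and coherence of the resulting predicate is obtained by a direct induction on formulas showing that the winning strategy's declarations are independent of the play once enough time remains. Your game is instead a single Hintikka-style semantic descent on the parse tree of one opening formula, so the rank bound falls out immediately from parse rank, and your coherence argument replaces the stabilization induction by the observation that $\sigma$'s standing declaration at any reachable node must be the unique winning side of the corresponding semantic sub-game (uniqueness needing only that both players cannot simultaneously win; existence, if you go through $\mathrm{Val}$, uses determinacy of those rank-below-$\Ord$ sub-games, which is available). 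Both arguments handle the ``interrogator has no winning strategy'' step the same way, by trapping play inside a suitable $V_\theta$ and letting the truth-teller answer according to set-sized satisfaction there. What your approach buys is a tighter connection between game-tree rank and parse-tree rank and a clean coherence argument; what the paper's approach buys is closer fidelity to the original Gitman--Hamkins argument and a game format that generalizes more readily to iterated truth along longer well-orders, where the clock counts down in $\omega\cdot\Gamma$ rather than in the parse rank.
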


\begin{proof}
We follow the main ideas of Gitman and Hamkins~\cite[theorem~9]{GitmanHamkins2016:OpenDeterminacyForClassGames}, using a natural infinitary analogue of the truth-telling game, where the interrogator counts down in the ordinals. Specifically, consider the truth-telling game for assertions in the logic $\mathcal{L}_{\Ord,\omega}(\in,A)$. There are two players, the \emph{interrogator} and the \emph{truth-teller}. At each move, the interrogator issues a challenge in the form of a set $\Phi$ of infinitary formulas $\varphi\in\mathcal{L}_{\Ord,\omega}(\in,A)$ and a valuation $\vec a$ of their free variables. The interrogator must also state an ordinal $\alpha$, which will strictly descend during play; we call this the count-down clock. The truth-teller replies to the inquiry by stating of each formula $\varphi\in\Phi$ whether it is true or false at $\vec a$ (not necessarily truthfully). If the truth-teller happens to declare an existential formula $\exists x\,\psi(x)$ to be true, then she is also obligated to provide a witness $b$ and declare that $(\psi,b\concat\vec a)$ is true. The truth-teller loses, if she should ever explicitly violate the Tarskian recursion. The interrogator loses if the clock runs out (which must happen eventually, in finitely many moves).

This game has rank at most $\Ord+1$, because of the ordinal count-down clock. Now, we simply argue as in~\cite{GitmanHamkins2016:OpenDeterminacyForClassGames} that if the truth-teller has a winning strategy, then the truth assertions made by that strategy will be independent of the play, for all plays in which there remains sufficient time on the clock. This can be proved by induction on formulas. It is clearly true for atomic formulas. And if it is true for a formula, it will be true for the negation, since any violation of this can be transformed into a violation of the Tarski recursion. If it is true for a set of formulas, then it will be true for the conjunction, just by taking the supremum of the stabilizing clock values plus one. And the quantifier case is also easy to handle.

Thus, these plays provide a truth predicate for our infinitary language, as desired.

Finally, we need to argue that the interrogator can have no winning strategy. If $\sigma$ is any strategy for the interrogator, then find an ordinal $\theta$ such that $V_\theta$ is closed under that strategy, and have the truth-teller play in accordance with truth in $V_\theta$. This will never violate the Tarski recursion and therefore it will defeat $\sigma$.

So if clopen determinacy holds for this game, then there is a truth predicate for $\mathcal{L}_{\Ord,\omega}(\in,A)$ truth, as desired.\end{proof}

We would like to remark that Gitman and Hamkins have pointed out that there is a flaw in their published proof of~\cite[theorem 9]{GitmanHamkins2016:OpenDeterminacyForClassGames} for the implication of clopen determinacy to \ETR. While that implication is indeed correct as they state, nevertheless it does not suffice for the interrogator to count down merely in the natural numbers, as they had initially claimed. Rather, the interrogator should count down in the order $\omega\cdot\Gamma$, where $\Gamma$ is the length of the iteration, and one can prove by induction in this case that the truth assertions made by the truth-teller about the solution up to any stage $\alpha$ are invariant of the play, provided that the count-down clock is at least $\omega\cdot(\Gamma\restrict\alpha)$. Gitman and Hamkins plan to release an updated version of their paper addressing this matter.

\section{Separating the theorem from other second-order theories}\label{Section.Separating}

In order to situate the class forcing theorem more precisely in the hierarchy of theories between $\GBC$ and $\KM$, let us prove a few theorems that separate $\ETRord$ from other similar principles in the vicinity.

\begin{theorem}
 The theory $\GBC+\ETR_{\Ord\cdot\omega}$ is strictly stronger in consistency strength than $\GBC+\ETRord$.
\end{theorem}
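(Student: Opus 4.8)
The plan is to prove that $\GBC+\ETR_{\Ord\cdot\omega}\vdash\Con(\GBC+\ETRord)$; the strict separation then follows from the incompleteness theorem. The easy half, that $\GBC+\ETR_{\Ord\cdot\omega}$ is at least as strong in consistency strength, is immediate, since $\ETRord$ is a subtheory of $\ETR_{\Ord\cdot\omega}$ and hence every model of the latter is a model of the former. For strictness it suffices to carry out the consistency proof. Working in $\GBC+\ETR_{\Ord\cdot\omega}$, fix a global well-order $W$, and using theorem~\ref{Theorem.ETR_Gamma-iff-Gamma-iterated-truth-predicates} with $\Gamma=\Ord\cdot\omega$ (noting $\omega^\omega\leq\Ord\cdot\omega$) obtain an $\Ord\cdot\omega$-iterated truth predicate $\Tr$ for the language $\mathcal{L}_{\omega,\omega}(\in,W)$.

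First I would assemble from $\Tr$ a definable class model of $\GBC+\ETRord$. Let $\mathcal{C}$ be the collection of classes of the form $\set{x\mid \Tr(\xi,\psi,\langle x\rangle\concat\vec p)}$ for some stage $\xi<\Ord\cdot n$ with $n<\omega$, some formula $\psi$, and set parameters $\vec p$; that is, $\mathcal{C}$ consists of the classes first-order definable over $\langle V,\in,W\rangle$ from some initial segment $\Tr\restrict(\Ord\cdot n)$ of the iterated truth predicate together with set parameters. Although $\mathcal{C}$ is formally a third-order object, membership in $\mathcal{C}$ is governed by a single relation on codes $(\psi,n,\vec p)$ that is definable from $\Tr$, and using $\Tr$ one likewise defines a satisfaction predicate $\mathrm{Sat}$ for the class structure $\langle V,\in,\mathcal{C}\rangle$, with the class quantifiers ranging over these coded classes.

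Next I would verify $\langle V,\in,\mathcal{C}\rangle\satisfies\GBC+\ETRord$. Global choice holds because $W\in\mathcal{C}$. First-order class comprehension holds because a first-order formula with finitely many class parameters from $\mathcal{C}$, each definable from some $\Tr\restrict(\Ord\cdot n_i)$, defines a class definable from $\Tr\restrict(\Ord\cdot N)$ for $N=\max_i n_i$, which is again in $\mathcal{C}$; the remaining $\GBC$ axioms hold trivially, since the first-order part is the actual $\langle V,\in\rangle$. For $\ETRord$, consider any $A\in\mathcal{C}$, say $A$ definable from $\Tr\restrict(\Ord\cdot n)$. The key claim is that an $\Ord$-iterated truth predicate $\Tr_A$ for $\mathcal{L}_{\omega,\omega}(\in,A)$ is definable from $\Tr\restrict(\Ord\cdot(n+1))$: the stages of $\Tr$ lying in the interval $[\Ord\cdot n,\Ord\cdot(n+1))$ provide first-order truth able to reference all earlier stages, in particular $\Tr\restrict(\Ord\cdot n)$ and hence $A$, so after a syntactic translation of the predicate $\hat A$ into $\hat\Tr$-assertions in the style of the $\star$-translation of theorem~\ref{Theorem.Truth-Ord-omega-implies-Iterated-truth}, these $\Ord$-many stages assemble into the desired $\Tr_A$. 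Since $\Tr\restrict(\Ord\cdot(n+1))\in\mathcal{C}$, we get $\Tr_A\in\mathcal{C}$. By theorem~\ref{Theorem.ETR_Gamma-iff-Gamma-iterated-truth-predicates} applied with $\Gamma=\Ord$, the solution $S$ to any instance of $\ETRord$ with parameter $A$ is definable from $\Tr_A$ by a fixed-point formula, so $S\in\mathcal{C}$, and every instance of $\ETRord$ is satisfied in the model.

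Finally I would conclude. Having a definable satisfaction predicate $\mathrm{Sat}$ for a definable class structure in which every axiom of the recursively axiomatized theory $\GBC+\ETRord$ is declared true, one proves by induction on derivations---using that $\mathrm{Sat}$ respects the deduction rules, exactly as in lemma~\ref{Lemma.Forcing-relations-respects-logical-consequence}(3)---that every theorem of $\GBC+\ETRord$ is declared true, whence $\GBC+\ETRord$ proves no contradiction. Thus $\GBC+\ETR_{\Ord\cdot\omega}\vdash\Con(\GBC+\ETRord)$. Were $\Con(\GBC+\ETRord)\to\Con(\GBC+\ETR_{\Ord\cdot\omega})$ provable in the base theory, we would derive $\GBC+\ETR_{\Ord\cdot\omega}\vdash\Con(\GBC+\ETR_{\Ord\cdot\omega})$, contradicting the second incompleteness theorem; hence the consistency strength is strictly greater. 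The main obstacle is the key claim above---recovering, within $\mathcal{C}$, an $\Ord$-iterated truth predicate relative to an arbitrary class parameter $A\in\mathcal{C}$ from just one additional $\Ord$-block of the parameter-free iterated truth predicate $\Tr$---which requires carefully matching the self-referential stages of $\Tr$ against the language carrying the predicate $\hat A$.
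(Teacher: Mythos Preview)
Your proposal is correct and follows essentially the same approach as the paper: both construct the inner model whose classes are those definable from some proper initial segment $\Tr\restrict(\Ord\cdot n)$ of an $(\Ord\cdot\omega)$-iterated truth predicate, and both verify $\ETRord$ there via the key claim that the next $\Ord$-block of $\Tr$ yields an $\Ord$-iterated truth predicate relative to any class parameter from a lower block. The paper additionally observes that this model fails $\ETR_{\Ord\cdot\omega}$ (since no such predicate is definable from a proper initial segment of itself), giving a direct non-implication as well as the consistency-strength gap, but for the stated theorem your incompleteness argument suffices.
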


\begin{proof}
Assume $\GBC+\ETR_{\Ord\cdot\omega}$, and fix a class global well-order. By theorem~\ref{Theorem.ETR_Gamma-iff-Gamma-iterated-truth-predicates}, it follows that we may form an $(\Ord\cdot\omega)$-iterated truth predicate $\Tr$ for first-order truth relative to the fixed global well-order (in the language with a predicate for that order). Using this predicate, consider the \GBC\ model having the same sets as $V$, but having as classes only those classes that are definable from a proper initial segment $\Tr\restrict(\Ord\cdot n)$ for some $n<\omega$. This is a \GBC\ model, and furthermore, it satisfies $\ETRord$, because if a class $A$ is definable from $\Tr\restrict\Ord\cdot n$, then we have an $\Ord$-iterated truth predicate definable from $\Tr\restrict\bigl(\Ord\cdot(n+1)\bigr)$. So by statement~\ref{Main.Iterated-truth} in the main theorem, we have $\ETRord$ in this model. But this model cannot have its own $(\Ord\cdot\omega)$-iterated truth predicate, because no such predicate is definable from a proper initial segment of itself. Thus, this is a model of $\ETRord$ without $\ETR_{\Ord\cdot\omega}$. But furthermore, since the entire collection of classes of our constructed model is coded by a single class in our original model, it follows that $\ETR_{\Ord\cdot\omega}$ implies the consistency of $\GBC+\ETRord$, as desired.
\end{proof}

\begin{theorem}
 The theory $\GBC+\ETRord$ has a strictly stronger consistency strength then the theory $\GBC+\ETR_{<\Ord}$, which asserts $\ETR_\alpha$ for every ordinal $\alpha$, provided this latter theory is consistent.
\end{theorem}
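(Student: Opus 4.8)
The plan is to mirror the proof of the preceding theorem: working inside a model of $\GBC+\ETRord$, I would build a submodel that satisfies $\ETR_\alpha$ for every ordinal $\alpha$ but fails $\ETRord$, and which is coded by a single class so as to witness $\Con(\GBC+\ETR_{<\Ord})$. First I would assume $\GBC+\ETRord$, fix a class global well-order $W$, and use statement~\ref{Main.Iterated-truth} of the main theorem (equivalently, theorem~\ref{Theorem.ETR_Gamma-iff-Gamma-iterated-truth-predicates} with $\Gamma=\Ord$) to form an $\Ord$-iterated truth predicate $\Tr$ for first-order truth relative to $W$. I would then define the submodel $N$ having the same sets as $V$ and taking as classes exactly those definable, with set parameters, from some proper initial segment $\Tr\restrict\alpha$ with $\alpha<\Ord$; note that $W$ is itself definable from $\Tr\restrict 1$. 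Since a class first-order definable from parameters coming from $\Tr\restrict\alpha$ and $\Tr\restrict\beta$ is definable from $\Tr\restrict\max(\alpha,\beta)$, and $W\in N$, the structure $N$ is a model of $\GBC$.

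Next I would verify $N\models\ETR_\alpha$ for every ordinal $\alpha\geq\omega^\omega$, which suffices since $\ETR_\Gamma$ implies $\ETR_\beta$ for $\beta\leq\Gamma$. Given such an instance in $N$ with class parameter $A$ definable from $\Tr\restrict\gamma$, the key point---exactly as in the preceding theorem---is that an $\alpha$-iterated truth predicate relative to $A$ is first-order definable from $\Tr\restrict(\gamma+\alpha)$, since $\Tr$ iterates truth relative to $W$ and $A$ sits at stage $\gamma$. As $\gamma+\alpha<\Ord$, this iterated truth predicate lies in $N$. By the converse direction of theorem~\ref{Theorem.ETR_Gamma-iff-Gamma-iterated-truth-predicates}, one extracts from it, via a first-order fixed-point construction and hence definably, a solution to the given recursion; being definable from $\Tr\restrict(\gamma+\alpha)$, this solution stays in $N$. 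Thus $N\models\ETR_{<\Ord}$.

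On the other hand $N\not\models\ETRord$. Were it to hold, $N$ would contain its own $\Ord$-iterated truth predicate relative to $W$, which by the uniqueness of truth predicates would coincide with the genuine $\Tr$; but $\Tr$ is definable from no proper initial segment of itself, so $\Tr\notin N$, a contradiction. Finally, for the consistency-strength conclusion, I would observe that the entire class part of $N$ is coded by the single class $\Tr$ of $V$. Since $V\models\ETRord$ provides, by the main theorem, truth predicates for the language $\mathcal{L}_{\Ord,\omega}(\in,\Tr)$, one obtains in $V$ a satisfaction class for the second-order structure $N$ and verifies that $N$ models every axiom of $\GBC+\ETR_{<\Ord}$. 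Hence $\GBC+\ETRord\vdash\Con(\GBC+\ETR_{<\Ord})$; and since $\GBC+\ETRord$ extends $\GBC+\ETR_{<\Ord}$, \Godel's second incompleteness theorem forbids the latter from proving this consistency statement, yielding the strict consistency-strength separation.

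I expect the main obstacle to be the bookkeeping in the verification that an $\alpha$-iterated truth predicate relative to a parameter $A$ sitting at stage $\gamma$ is first-order definable from $\Tr\restrict(\gamma+\alpha)$. This is the heart of the argument, and it requires care about how iterated truth relative to the fixed global order $W$ encodes iterated truth relative to the more complicated definable parameters appearing in $N$, together with the observation that the shift by $\gamma$ keeps one strictly below $\Ord$.
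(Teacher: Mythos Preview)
Your proposal is correct and follows essentially the same approach as the paper. The paper's proof is very terse---it simply says ``we may use a similar argument'' and sketches the same construction: take the $\Ord$-iterated truth predicate $\Tr$ relative to a fixed global well-order, and let the classes of the submodel be those definable from some proper initial segment $\Tr\restrict\alpha$; the paper then appeals to theorem~\ref{Theorem.ETR_Gamma-iff-Gamma-iterated-truth-predicates} for $\ETR_{<\Ord}$ in the submodel, exactly as you do, and leaves the failure of $\ETRord$ and the consistency conclusion implicit by analogy with the preceding theorem. Your write-up supplies the details the paper omits, and you have correctly located the one point requiring care, namely that an $\alpha$-iterated truth predicate relative to a parameter definable from $\Tr\restrict\gamma$ is recoverable from $\Tr\restrict(\gamma+\alpha)$.
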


\begin{proof}
We may use a similar argument for this. Let $\Tr$ be an $\Ord$-iterated truth predicate for first-order truth, with a fixed class global well-order parameter. Consider the \GBC\ model arising from the sets in $V$ together with any class that is definable from a proper initial segment of the truth predicate $\Tr_\alpha$. This latter \GBC\ model satisfies $\GBC+\ETR_{<\Ord}$ by theorem~\ref{Theorem.ETR_Gamma-iff-Gamma-iterated-truth-predicates}, since one has an $\alpha$-iterated truth predicate with respect to any class that arises.
\end{proof}

One may undertake similar arguments to separate many other levels of the $\ETR_\Gamma$ hierarchy. For example, $\ETR_\omega$ is not provable in \GBC, since one can use it to construct a truth predicate for first-order truth. But $\ETR_\omega$ does not establish $\ETR_{\omega^2}$, since $\ETR_{\omega^2}$ is enough to construct an $\omega$-iterated truth predicate, and then one can take the classes definable from a proper initial segment of it. This will be a \GBC\ model that satisfies $\ETR_\omega$, but not $\ETR_{\omega^2}$.

\section{Final remarks}

We have now proved all the implications necessary to establish the main theorem. Let us make a few final remarks on the topic of the class forcing theorem.

First, we should like to call attention to the fact that only some of the implications in the main theorem hold on a case-by-case basis for the various class forcing notions $\P$ and class parameter $A$. For example, if $\P$ admits a forcing relation for atomic formulas, then by theorems~\ref{Theorem.Atomic-implies-scheme} and~\ref{Theorem.Atomic-to-quantifier-free-forcing-relation} we get a scheme of forcing relations for first-order assertions and also a uniform forcing relation for the quantifier-free infinitary forcing language $\mathcal{L}_{\Ord,0}(\in,V^\P,\dot\Gamma_0,\ldots,\dot\Gamma_m)$, but we do not generally get a uniform forcing relation for the first-order language of set theory or for the stronger infinitary languages. To see this, consider a very nice forcing notion, such as the Easton forcing to control the \GCH\ pattern or even trivial forcing, in a model of \GBC\ having only its definable classes. There is a definable forcing relation for atomic formulas in this model, but there can be no uniform forcing relation for first-order assertions, since from such a relation we could define a truth predicate for the ground model, which is impossible as the model has only definable classes. To get the uniform forcing relation for a class $\P$, what one needs is an instance of $\ETRord$ relative to the forcing relation as a class parameter. In order to complete the cycle of implications in the main theorem, therefore, one applies the statements with stronger and more robust class parameters.

Second, an observant reader might have noticed that we established the equivalences of the main theorem mainly by appealing to an instance of class forcing, the forcing $\FA$, which probably one would rarely want to perform, since this forcing is highly destructive, collapsing the entire universe to become countable, and not just making all sets countable, but rather making $V$ itself countable. Perhaps the reader may wonder if the strength of the class forcing theorem is $\ETRord$ simply because one is including these strange notions of forcing, and that with a more well-behaved collection of class forcing, the principle might be weaker.

In a sense, the objection is correct. To see this, consider the \emph{pretame} class forcing notions, a prominent collection of well-behaved class forcing notions (see~\cite{HolyKrapfSchlicht2017:Characterizations-of-pretameness}). When forcing over the countable transitive models of $\GB$, it turns out that pretameness is equivalent to the preservation of the axioms of $\GB^-$, that is, $\GB$ without the power set axiom. Furthermore, Maurice Stanley has proved that the class forcing theorem holds outright for all pretame class forcing (see~\cite{HolyKrapfSchlicht2017:Characterizations-of-pretameness} for the case of forcing over a countable transitive model of set theory; the argument works generally in \GBC\ and in fact already in $\GB^-$). The class forcing theorem for pretame forcing, consequently, has no extra consistency strength beyond the base theory.

Meanwhile, if one attempts to move beyond pretame class forcing, then we claim that one is immediately again in the realm of the analysis of this article, with highly destructive forcing notions such as $\FA$. The reason is that~\cite[lemmas~2.6,~2.7]{HolyKrapfSchlicht2017:Characterizations-of-pretameness} shows that if a class forcing notion $\P$ is not pretame, then there is some cardinal $\delta$ and a class $\P$-name $\dot F$ forced to be a surjection from $\delta$ to $V$. In other words, any class forcing notion that is not pretame necessarily collapses the entire universe $V$ to a cardinal. Furthermore, making use of the class $\P$-name $\dot F$, an easy reworking of the arguments of section~\ref{Section.Forcing-theorem-implies-truth-for-L_Ord,omega} shows that for every proper class $A$, the forcing $\P$ is dense in a notion of class forcing $\Q_A$ that can be used in place of the forcing $\F_A$ in the argument of theorem~\ref{Theorem.Atomic-forcing-relation-to-truth-predicate}. So if we consider any natural collection of class forcing notions that goes strictly beyond pretame forcing and which includes a forcing notion whenever it includes a dense subclass of that forcing, then the class forcing theorem for this collection will already be as strong as the class forcing theorem for the collection of all class forcing notions.

Let us provide a rough sketch of the reworking idea we mentioned. Suppose that $\P$ is a non-pretame notion of class forcing and that $\dot F$ is a $\P$-name for a surjection from a cardinal $\delta$ to $\Ord$. Given a proper class $A$, we construct the forcing $\Q_A$ from $\P$ in the same manner that we had constructed $\F_A$ from $\Coll(\omega,V)$ in section~\ref{Section.Forcing-theorem-implies-truth-for-L_Ord,omega}. Namely, we add the conditions $e_{n,m}$ as before, as upper bounds for the conditions forcing $\dot F(\check n)\in\dot F(\check m)$, but we do this now for all $n,m<\delta$, and we similarly add the conditions $a_n$ for $n<\delta$ as upper bounds for the conditions forcing $\dot F(\check n)\in A$. In addition, we add a condition $(n\mapsto a)$, for every set $a$ and $n<\delta$, which will be the supremum of all conditions forcing that $\check n$ is the least ordinal with $\dot F(\check n)=\check a$. Since $\dot F$ is only a surjection and not necessarily a bijection, we define $\dot n_a$ so that it is the name of the \emph{least} ordinal $n<\delta$ that will get mapped to $a$. Under this setup, we may carry out the analogue of the proof of theorem~\ref{Theorem.Atomic-forcing-relation-to-truth-predicate}, replacing $\F_A$ by $\Q_A$ and $\omega$ by $\delta$, respectively. One minor change consists of isolating a least, rather than an arbitrary, ordinal $n<\delta$ for which $q$ forces $\lnot\varphi^*(\check n,\dot n_a)$ in the quantifier case in the proof of lemma~\ref{Lemma.p-forces-iff-one-forces}; this ensures that we can choose $q$ and find some set $b$ so that $q$ forces $\check n=\dot n_b$, as in the corresponding step in the proof of lemma~\ref{Lemma.p-forces-iff-one-forces}.

In summary, the class forcing theorem for pretame forcing is provable in \GBC, but for any sufficiently robust collection of class forcing notions going beyond the pretame forcing, the class forcing theorem will have the full strength of $\ETRord$ and all the other statements of the main theorem.

\bibliographystyle{alpha}
\bibliography{HamkinsBiblio,MathBiblio,WebPosts}

\newcommand{\etalchar}[1]{$^{#1}$}
\begin{thebibliography}{HKL{\etalchar{+}}16}

\bibitem[Fuj12]{Fujimoto2012:Classes-and-truths-in-set-theory}
Kentaro Fujimoto.
\newblock Classes and truths in set theory.
\newblock {\em Ann. Pure Appl. Log.}, 163(11):1484--1523, 2012.

\bibitem[GH16]{GitmanHamkins2016:OpenDeterminacyForClassGames}
Victoria Gitman and Joel~David Hamkins.
\newblock Open determinacy for class games.
\newblock In Andr\'es~E. Caicedo, James Cummings, Peter Koellner, and Paul
  Larson, editors, {\em Foundations of Mathematics, Logic at Harvard, Essays in
  Honor of Hugh Woodin's 60th Birthday}, Contemporary Mathematics. American
  Mathematical Society, 2016. 

\bibitem[HKL{\etalchar{+}}16]{HolyKrapfLuckeNjegomirSchlicht2016:Class-forcing}
Peter Holy, Regula Krapf, Philipp {L\"ucke}, Ana Njegomir, and Philipp
  Schlicht.
\newblock Class forcing, the forcing theorem and {Boolean} completions.
\newblock {\em J. Symb. Log.}, 81(4):1500--1530, 2016.

\bibitem[HKS17]{HolyKrapfSchlicht2017:Characterizations-of-pretameness}
Peter Holy, Regula Krapf, and Philipp Schlicht.
\newblock Characterizations of pretameness and the {O}rd-cc.
\newblock 2017.
\newblock {\em Ann. Pure Appl. Log.}, 169(8):775--802, 2018. 

\bibitem[HL16]{HamkinsLeahy2016:AlgebraicityAndImplicitDefinabilityInSetTheory}
Joel~David Hamkins and Cole Leahy.
\newblock Algebraicity and implicit definability in set theory.
\newblock {\em Notre Dame J. Formal Logic}, 57(3):431--439, 2016.

\bibitem[HS]{HamkinsSeabold:BooleanUltrapowers}
Joel~David Hamkins and Daniel Seabold.
\newblock Well-founded {Boolean} ultrapowers as large cardinal embeddings. 
\newblock Preprint, available at https://arxiv.org/abs/1206.6075. 


\bibitem[HY]{HamkinsYang:SatisfactionIsNotAbsolute}
Joel~David Hamkins and Ruizhi Yang.
\newblock Satisfaction is not absolute.
\newblock {\em To appear in Rev. Symb. Log.} 
\newblock Available at https://arxiv.org/abs/1312.0670. 


\bibitem[Kra74]{Krajewski1974:MutuallyInconsistentSatisfactionClasses}
S.~Krajewski.
\newblock Mutually inconsistent satisfaction classes.
\newblock {\em Bull. Acad. Polon. Sci. S\'er. Sci. Math. Astronom. Phys.},
  22:983--987, 1974.

\bibitem[Kra17]{Krapf2017:Dissertation}
Regula Krapf.
\newblock {\em Class forcing and second-order arithmetic}.
\newblock PhD thesis, University of Bonn, 2017.

\bibitem[Sat14]{Sato2014:Relative-predicativity-and-dependent-recursion-in-second-order-set-theory-and-higher-order-theories}
Kentaro Sato.
\newblock Relative predicativity and dependent recursion in second-order set
  theory and higher-order theories.
\newblock {\em J. Symb. Log.}, 79(3):712--732, 2014.

\bibitem[Wil19]{williams-min-km}
Kameryn J. Williams.
\newblock Minimum models of second-order set theories.
\newblock {\em J. Symb. Log.}, 8(2):589--620, 2019.

\end{thebibliography}
\end{document}